	\crefname{hypothesis}{Hypothesis}{Hypotheses}
	\newtheorem{aassumption}{Assumption}
	\title{An inexact proximal MM method for a class of nonconvex composite image reconstruction models\thanks{text
			\funding{This work is funded by the National Natural Science Foundation of China under project No.12371299.}}}
	\author{Bujin Li\thanks{School of Mathematics, South China University of Technology, Guangzhou, People's Republic of Chin (\email{mabjli@mail.scut.edu.cn}).}
		\and Shaohua Pan\thanks{School of Mathematics, South China University of Technology, Guangzhou, People's Republic of China (\email{shhpan@scut.edu.cn}).} \and Tieyong Zeng\thanks{Department of Mathematics, The Chinese University of Hong Kong, Hong Kong Special Administrative Region of China, People's Republic of China (\email{zeng@math.cuhk.edu.hk}).}}
\title{An inexact proximal MM method for a class of nonconvex composite image reconstruction models\thanks{\funding{This work is funded by the National Natural Science Foundation of China under project No.12371299.}}}
\begin{document}
	
\modulolinenumbers

\maketitle

\begin{abstract}
 This paper concerns a class of composite image reconstruction models for impluse noise removal, which is rather general and covers existing convex and nonconvex models proposed for reconstructing images with impluse noise. For this nonconvex and nonsmooth optimization problem, we propose a proximal majorization-minimization (MM) algorithm with an implementable inexactness criterion by seeking in each step an inexact minimizer of a strongly convex majorization of the objective function, and establish the convergence of the iterate sequence under the KL assumption on the constructed potential function. This inexact proximal MM method is applied to handle gray image deblurring and color image inpainting problems, for which the associated potential function satisfy the required KL assumption. Numerical comparisons with two state-of-art solvers for image deblurring and inpainting tasks validate the efficiency of the proposed algorithm and models.  
\end{abstract}
\begin{keywords}
Image restoration; nonconvex and nonsmooth optimization; inexact proximal MM method; global convergence; KL property
\end{keywords}
\begin{MSCcodes}
94A08,90C26,65K10,49J42
\end{MSCcodes}
%-----------------------------------------------------------------------------------------
\section{Introduction}\label{sec1}
 Image reconstruction problems typically seek a clear image (or matrix) from a corrupted image $b\in\mathbb{R}^{m\times n}$ by minimizing an objective function $f$ of the form
 \begin{equation} \label{p1}
  f(x)=g(x;b)+h(x),
 \end{equation}
 where $g(\cdot;b)\!:\mathbb{R}^{m\times n}\to\mathbb{R}$ is a data-fidelity term, and $h\!:\mathbb{R}^{m\times n}\to\overline{\mathbb{R}}:=(-\infty,\infty]$ is a regularization term. Here, $\mathbb{R}^{m\times n}$ represents the space consisting of all $m \times n$ real matrices, equipped with the trace inner product $\langle\cdot,\cdot\rangle$ and its induced Frobenius norm $\|\cdot\|_F$. In many applications, the relation between the observation matrix $b\in\mathbb{R}^{m\times n}$ and the true (or natural) image $\widehat{x}$ is modeled by
 \begin{equation}
  b=\mathcal{A}(\widehat{x})+\omega,
 \end{equation}
 where $\mathcal{A}:\mathbb{R}^{m\times n}\to\mathbb{R}^{m\times n}$ is a linear operator (see \cite{Buades05,Huang13,Cai09}), and $\omega\in\mathbb{R}^{m\times n}$ is a noise matrix. The relevant data-fidelity term generally takes the following form 
 \begin{equation}
  g(x;b)=\sum_{i=1}^{m}\sum_{j=1}^{n}\theta([|\mathcal{A}(x)-b|]_{i,j}),
 \end{equation}
 where $\theta\!:\mathbb{R}\to\mathbb{R}_+$ is a lower semicontinuous (lsc) function, and for a matrix $y\in\mathbb{R}^{m\times n}$, $|y|$ represents an $m\times n$ matrix with entries being $|y_{ij}|$. From a statistical point of view, $g$ is usually a measure for both the distortion and the noise intervening between the true data $\widehat{x}$ and the observation data $b$. Different data-fidelity term $g$ corresponds to different type of noises; for example,   $g(x;b)=\|\mathcal{A}(x)-b\|^2_F$ induced by $\theta(t)=t^2$ is an appropriate measure for the Gaussian noise \cite{CaiOsher09,Afonso10,Wang08}; while $g(x;b)=\|\mathcal{A}(x)-b\|_0$ induced by $\theta(t)={\rm sign}(t)$ is a suitable measure to treat the impluse noise \cite{Chartrand09,Yuan17}, where for a matrix $z\in\mathbb{R}^{m\times n}$, $\|z\|_0$ denotes the number of nonzero entries of $z$. Due to the combinatorial property of the zero norm $\|\cdot\|_0$, a popular surrogate for such a data-fidelity term is the $\ell_1$-norm data-fidelity term $\|\mathcal{A}(x)-b\|_1$ induced by $\theta(t)=t$ (see \cite{Chan05,Guo09,Bai16}). This convex data-fidelity term exhibits some advantage over impulse noise removal, but it also causes a biased solution of the original problem. To overcome this shortcoming, in recent years a few nonconvex data-fidelity terms were proposed to approximate the zero norm; for example, the SCAD \cite{Gu17}, logarithmic function \cite{Zhang20}, Geman function \cite{Zhang17}, and the family of functions proposed in \cite{Lanza16,Zeng19}. 

 Regularization term $h$ is often used to represent prior information on the natural image $\widehat{x}$. There are two kinds of common regularization terms in image reconstruction. One is the Total Variation (TV) term, first introduced by Rudin-Osher and Fatemi \cite{Rudinleonid92} for removing Gaussian noise. This regularization term is usually used to characterize gradient sparsity of images, and has been widely utilized in the recent two decades for image denoising and image deblurring (see, e.g., \cite{Yang09,Zhang17,Chambolle17}). The other is the low rank (LR) term, which is extensively employed to characterize low rank or approximate low rank property of images. Some common functions to promote low rank property include the nuclear norm \cite{Toh10,Zhou15}, the weighted nuclear norm \cite{GuXie17,GuZhang14}, and the truncated nuclear norm \cite{Hu13,Saeedi22}. For early image reconstruction models, their objective functions generally involve a single regularization term. To achieve better reconstruction effect, it is natural to consider a combination of different regularization terms; for example, the regularization term of TV plus indicator of a box constraint set was used in image deblurring \cite{Beck09,Zuo11,Chan13}. In recent years, hybrid regularization terms involving TV and LR terms have been widely used for image restoration, such as MR image super-resolution \cite{Shi15}, dynamic MRI \cite{Liu20}, remote sensing image destriping \cite{Yang20}, hyperspectral image restoration \cite{He16,Hu21,Cai22}, image denoising \cite{Chen18,Shi22}, image deblurring \cite{SunLi22,Wang23} and image completion \cite{Qiu23,Qiu21}, etc.

 Motivated by the better performance of nonconvex data-fidelity terms to promote sparsity and the better effect of hybrid regularization terms to express prior information, we focus on the following nonconvex and nonsmooth image reconstruction model
 \begin{equation}\label{model}
 	\min_{x\in\mathbb{X}}\Theta(x):=\vartheta(|F(x)|) +\nu\psi(T(x)) +\lambda h(x),
 \end{equation}
 where $\mathbb{X}$ represents a finite-dimensional real vector space endowed with the inner product $\langle\cdot,\cdot\rangle$ and its induced norm $\interleave\cdot\interleave$, $\nu\ge 0$ and $\lambda\ge0$ are the regularization parameters, and the functions $h,F,T$ and $\vartheta,\psi$ satisfy the following basic assumption: 
%--------------------------------------------------------------------------------------------------- 
 \begin{aassumption}\label{ass0}
  \begin{enumerate}
  \item[(i)] $h\!:\mathbb{X}\to\overline{\mathbb{R}}$ is a proper lsc convex function that is  strictly continuous relative to its domain ${\rm dom}\,h$ and is lower bounded;
 		
  \item[(ii)] the mappings $F\!:\mathcal{O}\to\mathbb{R}^{m\times n}$ and $T\!:\mathcal{O}\to\mathbb{R}^{p\times q}$ are continuously differentiable on an open set $\mathcal{O}$ containing ${\rm dom}\,h$; 
 		
  \item[(iii)] $\psi\!:\mathbb{R}^{p\times q}\to\mathbb{R}$ is a lower bounded convex function, and $\vartheta\!:\mathbb{R}^{m\times n}\to\mathbb{R}_{+}$ is a function induced by some $\theta\in\!\mathscr{L}$ via $\vartheta(z):=\sum_{i=1}^{m}\sum_{j=1}^{n}\theta(z_{ij})$ for $z\in\mathbb{R}^{m\times n}$, where $\mathscr{L}$ denotes a family of concave functions $\theta$ satisfying conditions 
 		\begin{itemize}
 		 \item[\bf(C.1)] $\theta$ is differentiable on an open interval $I\!\supset\mathbb{R}_{+}$ with a strictly continuous derivative $\theta'$;
 			
 		\item[\bf(C.2)] $\theta'(t)\ge 0$ for all $t\ge 0$ and $\theta(0)=0$.  
 		\end{itemize}
 \end{enumerate}
 \end{aassumption}

  Model \eqref{model} is very general and covers the case that $h$ is weakly convex. Indeed, if $h$ is weakly convex with modulus $\rho>0$, then $\widehat{h}=h+\frac{\rho}{2}\|\cdot\|^2$ is a convex function satisfying Assumption \ref{ass0} (i), and $\Theta$ has the form $\Theta(\cdot)=\vartheta(|F(\cdot)|) +\nu\widehat{\psi}(\widehat{T}(\cdot)) +\lambda\widehat{h}(\cdot)$ for some function $\widehat{\psi}$ and $\widehat{T}$ satisfying Assumption \ref{ass0} (ii)-(iii). Assumption \ref{ass0} (i) also allows $h$ to be an indicator function of some closed convex set $\Delta\subset\mathbb{X}$. The function family $\mathscr{L}$ contains many common concave functions to promote sparsity; see the examples in Table \ref{tab0} below. It is easy to check that the TVL1 model \cite{Guo09}, the NonconvexTV model \cite{Zhang17} and the TVLog model \cite{Zhang20} are all a special case of model \eqref{model}.
 %------------------------------------------------------------------------------------------
  \renewcommand{\arraystretch}{2}
  \begin{table}[h] 	
  \caption{Common examples in the function family $\mathscr{L}$}\label{tab0}
  \centering
  \begin{threeparttable} 
   \begin{tabular}{|c|c|}
   \hline
  $\theta_{1}(t)=t$ for $t\in\mathbb{R}$ &	 			$\theta_{2}(t)=\ln(t\!+\!\varepsilon)-\ln\varepsilon$ for $t>-\varepsilon$\\
  \hline	
  $\theta_{3}(t)=\frac{t}{t+\varepsilon}$ for $t>-\varepsilon$ &   			$\theta_{4}(t)=\frac{1-\exp(-\varepsilon t)}{1-\exp(-\varepsilon)}$ for $t>-\varepsilon$ \\
  \hline	 
  $\theta_{5}(t)=(t\!+\!\varepsilon)^{q}$ for $t>-\varepsilon$ &  			$\theta_{6}(t)=\frac{2}{\sqrt{3}}{\rm atan}(\frac{1+2\varepsilon t}{\sqrt{3}})-\frac{\pi}{3\sqrt{3}}$ for $t>-\frac{1}{2\varepsilon}$\\
  \hline
  \end{tabular}
  {\small where $\varepsilon$ appearing in this table is a given small positive constant.}
  \end{threeparttable} 
  \end{table} 
  \renewcommand{\arraystretch}{1}
 %----------------------------------------------------------------------------------------------- 
 
 \subsection{Main contribution}\label{sec1.1}
 
 This work aims to develop an efficient algorithm with convergence certificate for solving the composite image reconstruction model \eqref{model}, and its main contributions are summarized as follows. 
 \begin{itemize}
 \item[(i)] We proposed a general image reconstruction model \eqref{model} under impluse noise by incorporating a nonconvex data-fidelity term and two different regularization terms. This model uses a family of concave fuctions to construct data-fidelity term so as to avoid the bias caused by the $\ell_1$-norm data-fidelity term, and adopts two different regularizers to enhance image recovery quality. As discussed above, it contains existing convex and nonconvex reconstruction models for impluse noise removal. 
 	
 \item[(ii)] For the nonconvex and nonsmooth model \eqref{model}, we propose an inexact proximal MM algorithm by seeking in each step an inexact minimizer of a strongly convex subproblem, and verify the convergence of the whole iterate sequence under the KL assumption on a potential function. Although the involved inexactness criterion is similar to the one adopted by Bonettini et al. \cite{Bonettini20} for designing forward-backward (FB) algorithms, our convergence analysis depends on the constructed potential function rather than the FB envelope of the objective function as in \cite{Bonettini20}. This inexactness criterion is easily implementable and weaker than the one in \cite{Zhang17} catering for the direct application of the convergence analysis framework proposed in \cite{Attouch13}. 
 	
 \item [(iii)] Our inexact proximal MM method is applied to resolve the image restoration problems from deblurring and inpainting, which are a special case of model \eqref{model}. Extensive numerical comparisons with the PLMA \cite{Zhang17} on gray image deblurring and ADMMLp \cite{Shang18} on color image inpainting show that our method has better PSNR and SSIM than the PLMA for the high noise case (see Tables \ref{tab1}-\ref{tab2}), and is remarkably superior to ADMMLp in PSNR and SSIM under noiseless and noisy scenarios (see Tables \ref{tab3}-\ref{tab4}).
 \end{itemize} 

 This paper is organized as follows. Section \ref{sec2} introduces the notion of stationary points for problem \eqref{model}, and provides some preliminary results that will be used in the subsequent sections. Section \ref{sec3} describes the iterate steps of the proposed method and verifies its well-definedness, and Section \ref{sec4} focuses on the global convergence analysis of the generated iterate sequence. In Section \ref{sec5}, we present the application of the proposed algorithm in image deblurring and inpainting tasks, and conduct numerical experiments to validate its efficiency in Section \ref{sec6}. Section \ref{sec7} concludes this paper. 
%---------------------------------------------------------------------------------------
\section{Notation and preliminaries}\label{sec2}

 In this paper, $\mathbb{R}_{+}^{m\times n}$ represents the set comprising all nonnegative matrices in $\mathbb{R}^{m\times n}$, $\mathcal{I}$ is the identity mapping from $\mathbb{X}$ to $\mathbb{X}$, and for an integer $k\ge 1$, write $[k]:=\{1,2,\ldots,k\}$. For any $z\in\mathbb{R}^{m\times n}$, $\mathbb{B}(z,\delta)$ means the closed ball on the Frobenius norm centered at $z$ with radius $\delta$. For an $x\in\mathbb{X}$, $\mathbb{B}_{\mathbb{X}}(x,\delta)$ denotes the closed ball on the norm $\interleave\cdot\interleave$ centered at $x$ with radius $\delta$. For a positive definite (PD) linear operator $\mathcal{Q}\!:\mathbb{X}\to\mathbb{X}$, we write $\interleave x\interleave_{\mathcal{Q}}\!:=\!\sqrt{\langle x,\mathcal{Q}x\rangle}$, and for a linear mapping $\mathcal{A}\!:\mathbb{X}\to\mathbb{R}^{m\times n}$, let $\|\mathcal{A}\|:=\sup_{\interleave\!x\!\interleave=1}\|\mathcal{A}x\|_F$. For a matrix $x\in\mathbb{R}^{m\times n}$ and an index $j\in[n]$, $x_{\cdot j}$ means the $j$th column of $X$, and $\|x\|_{2,1}:=\sum_{j=1}^n\|x_{\cdot j}\|$ denotes the column $\ell_{2,1}$-norm of $x$. For a closed set $\Delta\subset\mathbb{X}$, $\mathbb{I}_{\Delta}$ denotes the indicator function of $\Delta$, i.e., $\mathbb{I}_{\Delta}(x)=0$ if $x\in\Delta$, otherwise $\mathbb{I}_{\Delta}(x)=\infty$, and $\Pi_{\Delta}$ denotes the projection mapping onto the set $\Delta$. For a differentiable mapping $g\!:\mathbb{X}\to\mathbb{R}^{m\times n}$, $\nabla g(x)$ denotes the adjoint of $g'(x)$, the differential mapping of $g$ at $x$, and if $g$ is twice differentiable at $x$, $D^2g(x)$ represents the twice differential mapping of $g$ at $x$, and $D^2g(x)(u,\cdot)$ for $u\in\mathbb{X}$ is a linear mapping from $\mathbb{X}$ to $\mathbb{R}^{m\times n}$. For an extended real-valued function $f\!:\mathbb{X}\to\overline{\mathbb{R}}$, denote by ${\rm dom}\,f:=\{x\in\mathbb{X}\,|\,f(x)<\infty\}$ the effective domain of $f$, and if $f$ is strictly continuous at $\overline{x}\in\mathbb{X}$, ${\rm lip}\,f(\overline{x})$ denotes the Lipschitz modulus of $f$ at $\overline{x}$.

%----------------------------------------------------------------------------------
\subsection{Stationary points of problem \eqref{model}}\label{sec2.1} 

 To introduce the notion of stationary points for problem \eqref{model}, we first recall from \cite{RW98} the basic subdifferential of an extended real-valued function $f\!:\mathbb{X}\to\overline{\mathbb{R}}$ at a point of its domain.
 %---------------------------------------------------------------------------------- 
 \begin{definition}\label{def-subdiff}(see \cite[Definition 8.3]{RW98}) Consider a function $f\!:\mathbb{X}\to\overline{\mathbb{R}}$ and a point $x\in{\rm dom}\,f$. The regular (or Fr\'echet) subdifferential of $f$ at $x$ is defined as
 	\[
 	\widehat{\partial}f(x):=\bigg\{v\in\mathbb{X}\ |\ \liminf_{x\ne x'\to x}\frac{f(x')-f(x)-\langle v,x'-x\rangle}{\interleave x'-x\interleave}\ge 0\bigg\}, 
 	\]
 	and the basic (also known as limiting or Morduhovich) subdifferential of $f$ at $x$ is defined as 
 	\[
 	\partial f(x)\!:=\!\Big\{v\in \mathbb{X}\ |\ \exists x^k \to x\ {\rm with}\ f(x^k)\to f(x), v^k\!\in \widehat{\partial}f(x^k)\ {\rm with}\ v^k \to v \Big\}. 
 	\]
 \end{definition}
 \begin{remark}\label{remark-subdiff}
  {\bf (a)} By Definition \ref{def-subdiff}, it is easy to check that $\widehat{\partial}f(x)\subset\partial f(x)$, $\widehat{\partial}f(x)$ is closed and convex, and $\partial f(x)$ is closed but generally not convex. When $f$ is convex, they both reduce to the subdifferential in the sense of convex analysis \cite{Roc70}. 
 	
 \noindent
 {\bf(b)} Let $\{(x^k,v^k)\}_{k\in\mathbb{N}}$ be a sequence converging to $(\overline{x},\overline{v})$ from the graph of $\partial\!f$. Then, when $f(x^k)\to f(\overline{x})$, $(\overline{x},\overline{v})\in{\rm gph}\,\partial\!f$. In the sequel, if $0\in\partial f(x)$, the vector $x\in\mathbb{X}$ is called a stationary point of the problem $\inf_{z\in\mathbb{X}}f(z)$ or a critical point of $f$.
 \end{remark} 
 
 Now we characterize the subdifferential of the function $\Theta$. This requires the following chain rule for the subdifferential of composite functions.  
%---------------------------------------------------------------------------------- 
\begin{lemma} \label{chain-sub}
 Let $f(x)\!:=\vartheta(|x|)$ for $x\in\mathbb{R}^{m\times n}$. Then $f$ is strictly continuous and regular, and at any $\overline{x}\in\mathbb{R}^{m\times n}$, $\partial f(\overline{x})=\big\{v\in\mathbb{R}^{m\times n}\ |\ v_{ij}\in\theta'(|\overline{x}_{ij}|)\,\partial|\overline{x}_{ij}|\big\}$.
\end{lemma}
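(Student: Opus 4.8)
The plan is to exploit the separable structure of $f$. Writing $\phi(t):=\theta(|t|)$ for $t\in\mathbb{R}$, we have $f(x)=\sum_{i=1}^m\sum_{j=1}^n\phi(x_{ij})$, so $f$ is a finite fully separable sum of copies of the one-variable function $\phi$ acting on the entries of $x$. Since $\theta$ is continuously differentiable with a strictly continuous derivative on the open interval $I\supset\mathbb{R}_{+}$ by (C.1), $\theta$ is locally Lipschitz on $I$, and as $|\cdot|$ is globally $1$-Lipschitz and maps into $\mathbb{R}_{+}\subset I$, the composition $\phi$ is strictly continuous; hence so is $f$. By the subdifferential calculus for separable functions \cite[Proposition 10.5]{RW98}, $f$ is regular at $\overline{x}$ if and only if $\phi$ is regular at each entry $\overline{x}_{ij}$, and in that case $\partial f(\overline{x})=\prod_{i,j}\partial\phi(\overline{x}_{ij})$, i.e. $v\in\partial f(\overline{x})$ exactly when $v_{ij}\in\partial\phi(\overline{x}_{ij})$ for all $i,j$. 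Thus it suffices to prove that $\phi$ is strictly continuous and regular on $\mathbb{R}$ with $\partial\phi(t)=\theta'(|t|)\,\partial|t|$.

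For $t\ne 0$, the map $s\mapsto|s|$ is smooth near $t$, so $\phi$ is continuously differentiable there with $\phi'(t)=\theta'(|t|)\,{\rm sign}(t)$; consequently $\phi$ is regular at $t$ and $\partial\phi(t)=\{\theta'(|t|)\,{\rm sign}(t)\}=\theta'(|t|)\,\partial|t|$ since $\partial|t|=\{{\rm sign}(t)\}$. The only delicate point is $t=0$, where I would compute the regular subdifferential directly from the definition. Using $\theta(0)=0$ and the differentiability of $\theta$ at $0$, one has $\theta(|t|)/|t|\to\theta'(0)$ as $t\to 0$, so for any $v\in\mathbb{R}$,
\begin{equation*}
 \liminf_{t\to 0}\frac{\phi(t)-\phi(0)-vt}{|t|}
 =\liminf_{t\to 0}\Big(\frac{\theta(|t|)}{|t|}-v\,{\rm sign}(t)\Big)
 =\min\{\theta'(0)-v,\ \theta'(0)+v\}.
\end{equation*}
This is nonnegative precisely when $-\theta'(0)\le v\le\theta'(0)$, whence $\widehat{\partial}\phi(0)=[-\theta'(0),\theta'(0)]=\theta'(0)\,[-1,1]=\theta'(0)\,\partial|0|$, where the symmetry uses $\theta'(0)\ge 0$ from (C.2).

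It then remains to show $\partial\phi(0)=\widehat{\partial}\phi(0)$, which yields both regularity at $0$ and the claimed formula there. Since $\widehat{\partial}\phi(0)\subset\partial\phi(0)$ always holds, I only need the reverse inclusion, namely that every limiting subgradient at $0$ lies in $[-\theta'(0),\theta'(0)]$. Take $t^k\to 0$ and $v^k\in\widehat{\partial}\phi(t^k)$ with $v^k\to v$. If infinitely many $t^k=0$, the corresponding $v^k\in[-\theta'(0),\theta'(0)]$, and so is their limit. If $t^k>0$ (resp. $t^k<0$), then $v^k=\theta'(|t^k|)\,{\rm sign}(t^k)$, and the continuity of $\theta'$ forces $v^k\to\theta'(0)$ (resp. $v^k\to-\theta'(0)$), which again lies in $[-\theta'(0),\theta'(0)]$. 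Hence $\partial\phi(0)\subset[-\theta'(0),\theta'(0)]=\widehat{\partial}\phi(0)$, so equality and regularity hold at $0$. Combining the three cases, $\phi$ is strictly continuous and regular on $\mathbb{R}$ with $\partial\phi(t)=\theta'(|t|)\,\partial|t|$, and the separable rule transfers these properties to $f$ and produces the asserted description of $\partial f(\overline{x})$.

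I expect the main obstacle to be the behavior at the kink $t=0$: one must verify not only the value of $\widehat{\partial}\phi(0)$ but also that the outer limiting process introduces no additional subgradients, which is exactly what establishes the subdifferential regularity of the nonsmooth composition $\theta(|\cdot|)$ at the origin. Everywhere else the analysis is routine, and the passage from the scalar $\phi$ to the matrix function $f$ is handled cleanly by the separability of the sum.
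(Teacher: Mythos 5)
Your proof is correct, and its skeleton matches the paper's: both reduce to the scalar function $\phi(t)=\theta(|t|)$ via the separable-subdifferential rule \cite[Proposition 10.5]{RW98} and dispose of the points $t\neq 0$ by smoothness of $\phi$ there. The difference lies entirely in how the kink at $t=0$ is handled. The paper does not compute anything by hand: it observes that $\theta'(0)\ge 0$ makes the linearized composition $t'\mapsto\theta'(0)|t'|$ regular and then invokes the scalarization chain rule \cite[Theorem 10.49]{RW98} to conclude $\partial\phi(0)=\theta'(0)\,\partial|0|$ together with regularity in one stroke. You instead verify everything from the definitions: you compute $\widehat{\partial}\phi(0)=[-\theta'(0),\theta'(0)]$ via the $\liminf$ quotient (correctly using $\theta(0)=0$, differentiability of $\theta$ at $0$, and $\theta'(0)\ge 0$ for the symmetry), and then show the limiting process adds no new subgradients by tracking the three kinds of sequences $t^k\to 0$, where continuity of $\theta'$ (from (C.1)) forces the one-sided gradient limits to be $\pm\theta'(0)$. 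What the paper's route buys is brevity, at the cost of having to check the hypotheses of a nontrivial citation; what your route buys is a self-contained, elementary argument that makes visible exactly why regularity survives at the origin — namely that the outer limits collapse into the interval already produced by the regular subdifferential. One small point worth making explicit in your write-up: for strictly continuous functions, the equality $\partial\phi(0)=\widehat{\partial}\phi(0)$ is indeed equivalent to regularity in the sense of \cite[Definition 7.25]{RW98} (the horizon condition is automatic since $\partial^{\infty}\phi(0)=\{0\}$), which is the bridge you implicitly cross when you conclude "equality and regularity hold at $0$."
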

\begin{proof}
 Note that $f(x)=\sum_{i=1}^m\sum_{j=1}^n\theta(|x_{ij}|)$ with $\theta\in\!\mathscr{L}$. The strict continuity of $f$ is trivial. Since $f$ is separable with respect to variable $x_{ij}$, by \cite[Proposition 10.5]{RW98}, it suffices to argue that $\phi(t):=\theta(|t|)$ for $t\in\mathbb{R}$ is regular and its subdifferential at any $\overline{t}\in\mathbb{R}$ is $\partial\phi(\overline{t})=\big\{v\in\mathbb{R}\,|\, v\in\theta'(|\overline{t}|)\partial|\overline{t}|\big\}$. Fix any $\overline{t}\in\mathbb{R}$. When $\overline{t}\neq0$, obviously, $\phi$ is differentiable at $\overline{t}$ with $\phi'(\overline{t})=\theta'(|\overline{t}|){\rm sign}(\overline{t})$, so the desired equality holds. When $\overline{t}=0$, since $\theta'(0)\ge 0$, the function $t'\mapsto \theta'(0)|t'|$ is regular. By  \cite[Theorem 10.49]{RW98},  $\partial\phi(0)=\theta'(0)\partial |0|$, so the desired equality also holds.
\end{proof}
%----------------------------------------------------------------------------------------------
\begin{proposition}\label{subdiff-Phi}
 The function $\Theta$ is strictly continuous relative to ${\rm dom}\,h$ and regular, and at any $x\in{\rm dom}\,h$,
 \[
   \partial\Theta(x)=\nabla F(x)[\nabla\vartheta(|F(x)|)\circ\partial|F(x)|]+\nu\nabla T(x)\partial\psi(T(x)) + \lambda\,\partial h(x)
 \]
 where, for a given $y\in\mathbb{R}^{m\times n}$, $y\circ\partial|F(x)|=\{v\in\mathbb{R}^{m\times n}\,|\, v_{ij}\in y_{ij}\,\partial|[F(x)]_{ij}|\}$. 
\end{proposition}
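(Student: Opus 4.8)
The plan is to decompose $\Theta$ into three summands and to compute the subdifferential of each via the chain rule for composition with a smooth map, then assemble the pieces with the sum rule. Write $f(y):=\vartheta(|y|)$ for $y\in\mathbb{R}^{m\times n}$ and $\Theta=\Theta_1+\nu\Theta_2+\lambda h$ with $\Theta_1:=f\circ F$ and $\Theta_2:=\psi\circ T$. The single observation that trivializes every qualification condition is that the two outer functions are \emph{strictly continuous}: $f$ by Lemma \ref{chain-sub}, and $\psi$ because it is finite-valued and convex, hence locally Lipschitz on $\mathbb{R}^{p\times q}$. Consequently their horizon (singular) subdifferentials vanish identically, i.e.\ $\partial^{\infty}f\equiv\{0\}$ and $\partial^{\infty}\psi\equiv\{0\}$ by \cite[Theorem 9.13]{RW98}.

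First I would treat $\Theta_1=f\circ F$. Since $F$ is continuously differentiable on the open set $\mathcal{O}\supset{\rm dom}\,h$ and $\partial^{\infty}f(F(x))=\{0\}$, the chain-rule qualification (only $y\in\partial^{\infty}f(F(x))$ with $\nabla F(x)y=0$ is $y=0$) holds vacuously. As $f$ is regular at $F(x)$ by Lemma \ref{chain-sub}, \cite[Theorem 10.6]{RW98} gives that $\Theta_1$ is regular with $\partial\Theta_1(x)=\nabla F(x)\,\partial f(F(x))$; substituting $\partial f(F(x))=\nabla\vartheta(|F(x)|)\circ\partial|F(x)|$ from Lemma \ref{chain-sub} produces the first term of the claimed expression. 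The identical argument applied to $\Theta_2=\psi\circ T$, using the convexity (hence regularity) of $\psi$ and $\partial^{\infty}\psi\equiv\{0\}$, yields that $\Theta_2$ is regular with $\partial\Theta_2(x)=\nabla T(x)\,\partial\psi(T(x))$.

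Next I would assemble the three pieces with the sum rule. Both $\Theta_1$ and $\Theta_2$ are strictly continuous on $\mathcal{O}$, being compositions of locally Lipschitz outer functions with $C^{1}$ maps, so $\partial^{\infty}\Theta_1\equiv\{0\}$ and $\partial^{\infty}\Theta_2\equiv\{0\}$; together with the convexity of $h$ this makes the sum-rule qualification of \cite[Corollary 10.9]{RW98} hold at every $x\in{\rm dom}\,h$. Since each summand is regular, the corollary delivers both the regularity of $\Theta$ and the additivity $\partial\Theta(x)=\partial\Theta_1(x)+\nu\,\partial\Theta_2(x)+\lambda\,\partial h(x)$, and substituting the two expressions from the previous step completes the formula. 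Finally, the strict continuity of $\Theta$ relative to ${\rm dom}\,h$ follows from that of $\Theta_1,\Theta_2$ on $\mathcal{O}$ and of $h$ relative to ${\rm dom}\,h$ (Assumption \ref{ass0}(i)).

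The step I expect to require the most care is the bookkeeping of the qualification conditions and the degenerate cases $\nu=0$ or $\lambda=0$. One must confirm that the identical vanishing of $\partial^{\infty}f$ and $\partial^{\infty}\psi$ really does discharge both the chain-rule and sum-rule qualifications at every point, and that the positive-homogeneity identity $\partial(\nu g)(x)=\nu\,\partial g(x)$ extends to the value $\nu=0$ through the convention $0\cdot S=\{0\}$ for nonempty $S$. Here the nonemptiness of $\partial\Theta_2(x)$ and $\partial h(x)$—the former from local Lipschitz continuity, the latter from the strict continuity of $h$ relative to ${\rm dom}\,h$ in Assumption \ref{ass0}(i)—is precisely what legitimizes this convention, so that the formula remains valid even when a regularization parameter is zero.
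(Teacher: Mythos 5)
Your proposal is correct and follows essentially the same route as the paper's proof: the same decomposition $\Theta=(f\circ F)+\nu(\psi\circ T)+\lambda h$, the chain rule of \cite[Theorem 10.6]{RW98} together with Lemma \ref{chain-sub} for the two composite terms, and the sum rule of \cite[Corollary 10.9]{RW98} combined with \cite[Theorem 9.13]{RW98} (strict continuity forcing the horizon subdifferentials to vanish) to obtain regularity and additivity. Your explicit bookkeeping of the qualification conditions and of the degenerate cases $\nu=0$, $\lambda=0$ merely spells out what the paper's citations handle implicitly.
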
  
\begin{proof}  
 Let $\widetilde{f}(x)=f(F(x))$ and $g(x)=\psi(T(x))$ for $x\in\mathbb{X}$, where $f$ is the function defined in Lemma \ref{chain-sub}. Clearly, $\widetilde{f}+\nu g$ is strictly continuous on $\mathbb{X}$. Recall that $h$ is strictly continuous relative to its domain. The function $\Theta=\widetilde{f}+\nu g+\lambda h$ is strictly continuous relative to ${\rm dom}\,h$. Note that $\widetilde{f}=f\circ F$ and $g=\psi\circ T$. By using Lemma \ref{chain-sub} and \cite[Theorem 10.6]{RW98}, $\widetilde{f}$ and $g$ are regular, and at any $x\in\mathbb{X}$, $\partial\widetilde{f}(x)=\nabla F(x)\partial f(F(x))=\nabla F(x)[\nabla\vartheta(|F(x)|)\circ\partial|F(x)|]$ and $\partial g(x)=\nabla T(x)\partial\psi(T(x))$. By combining \cite[Corollary 10.9 \& Theorem 9.13]{RW98} with the strict continuity and regularity of $\widetilde{f}$ and $g$ and the convexity of $h$, we have that the function $\Theta$ is regular, and at any $x\in\mathbb{X}$, $\partial\Theta(x)=\partial\widetilde{f}(x)+\nu\partial g(x)+\lambda\partial h(x)$. The conclusion then follows.
\end{proof} 

 By Remark \ref{remark-subdiff} (b) and Proposition \ref{subdiff-Phi}, we define the following stationary point. 
%-----------------------------------------------------------------------------------
\begin{definition}\label{def-Spoint}
 A vector $x\in{\rm dom}\,h$ is called a stationary point of \eqref{model} if 
 \[
  0\in\nabla F(x)[\nabla\vartheta(|F(x)|)\circ\partial|F(x)|]+\nu\nabla T(x)\partial\psi(T(x)) +\lambda\partial h(x).
 \]
\end{definition} 
%----------------------------------------------------------------------------------
\subsection{KL property of nonsmooth functions}\label{sec2.2}

 The KL property of an extended real-valued function plays a crucial role in the convergence (rate) analysis of algorithms for nonconvex and nonsmooth optimization problems (see \cite{Attouch10,Attouch13}). To recall the KL property of an extended real-valued function, for any $\eta\in(0,\infty]$, we denote by $\Upsilon_{\!\eta}$ the set consisting of all continuous concave function $\varphi\!:[0,\eta)\to\mathbb{R}_+$ that is continuously differentiable on $(0,\eta)$ with $\varphi(0)=0$ and $\varphi'(s)>0$ for all $s\in(0,\eta)$. 
%-----------------------------------------------------------------------------------
\begin{definition}\label{Def-KL}
 A proper lsc  $f\!:\mathbb{X}\to\mathbb{\overline{R}}$ is said to have the KL property at $\overline{x}\in{\rm dom}\,\partial f$ if there exist $\eta\in(0,\infty],\, \varphi\in\Upsilon_{\!\eta}$ and a neighborhood $\mathcal{V}$ of $\overline{x}$ such that 
 \[ 
	\varphi'(f(x)-f(\overline{x})){\rm dist}(0,\partial f(x))\ge 1 \quad {\rm for}\ {\rm all}\ x\in \mathcal{V}\cap [f(\overline{x})<f<f(\overline{x})+\eta].    
 \]
 If $\varphi$ can be chosen as $\varphi(t)=ct^{1-\gamma}$ with $\gamma\in[0,1)$ for some $c>0$, then $f$ is said to have the KL property of exponent $\gamma$ at $\overline{x}$. If $f$ has the KL property (of exponent $\gamma$) at each point of ${\rm dom}\,\partial f$, it is called a KL function (of exponent $\gamma$). 
\end{definition}

 As illustrated in \cite[Section 4]{Attouch10}, KL functions are rather extensive and cover semialgebraic functions, global subanalytic functions, and functions definable in an o-minimal structure over the real field $(\mathbb{R},+,\cdot)$. From \cite{Dries96}, the functions $\theta_1$-$\theta_5$ in Table \ref{tab0} and the associated $\vartheta$ are  definable in an o-minimal structure over the real field $(\mathbb{R},+,\cdot)$. By \cite[Lemma 2.1]{Attouch10}, a proper lsc $f\!:\mathbb{X}\to\overline{\mathbb{R}}$ has the KL property of exponent $0$ at every noncritical point, so to argue that a proper lsc function is a KL function of exponent $\gamma\in[0,1)$, it suffices to check its KL property of exponent $\gamma\in[0,1)$ at all critical points. A family of zero-norm regularized functions were proved to have KL property of exponent $1/2$ in \cite{WuPanBi21}. On the calculation rule of KL exponent, see \cite{LiPong18,YuLiPong21}. 
%------------------------------------------------------------------------------------------
\subsection{Local linearization of composite functions}\label{sec2.3}

 We present a lemma to state the relation between the composite $\vartheta\circ F$ and its local linearization at a point. 
%--------------------------------------------------------------------------------
\begin{lemma} \label{local-lip1}
 Fix any $\overline{x}\in\mathbb{X}$. Let $\ell_{F}(x;\overline{x})\!:=F(\overline{x})+F'(\overline{x})(x\!-\!\overline{x})$ for $x\in\mathbb{X}$. Then, there exist $\overline{\delta}_1>0,\overline{\alpha}_1>{\rm lip}\,\vartheta(|F(\overline{x})|)\,{\rm lip}\,F'(\overline{x})$ and $\overline{\alpha}_2\!>{\rm lip}\,\nabla\vartheta(|F(\overline{x})|)$ such that for all $x\in\mathbb{B}(\overline{x},\overline{\delta}_1)$, 
 \begin{align}\label{lip-vtheta1}
 \big|\vartheta(|F(x)|)-\vartheta(|\ell_{F}(x;\overline{x})|)\big|\le (\overline{\alpha}_1/2)\interleave\!x-\overline{x}\interleave^2,\\
 \big|\vartheta(|\ell_{F}(x;\overline{x})|)-\vartheta(|F(\overline{x})|)-\langle\nabla\vartheta(|F(\overline{x})|),|\ell_{F}(x;\overline{x})|- |F(\overline{x})|\rangle\big|\nonumber\\
 \le({\overline{\alpha}_2}/{2})\|F'(\overline{x})\|^2\!\interleave\!x-\overline{x}\!\interleave^2.\qquad\qquad\qquad\label{lip-vtheta2}
 \end{align}
\end{lemma}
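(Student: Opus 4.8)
The plan is to establish the two estimates separately, each by composing a Taylor-type remainder bound with a genuine local Lipschitz bound extracted from the relevant Lipschitz modulus. Throughout I would use two elementary facts. First, the entrywise absolute value is nonexpansive, i.e. $\||A|-|B|\|_F\le\|A-B\|_F$ for all $A,B\in\mathbb{R}^{m\times n}$. Second, by the very definition of ${\rm lip}$, for any constant strictly exceeding ${\rm lip}\,g(\overline z)$ there is a neighborhood of $\overline z$ on which $g$ is honestly Lipschitz with that constant; this is exactly why the statement asks for the strict inequalities $\overline\alpha_1>{\rm lip}\,\vartheta(|F(\overline x)|)\,{\rm lip}\,F'(\overline x)$ and $\overline\alpha_2>{\rm lip}\,\nabla\vartheta(|F(\overline x)|)$. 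Here I note that $\vartheta$ is in fact $C^1$ on the open set $I^{m\times n}\supset\mathbb{R}_+^{m\times n}$ with $[\nabla\vartheta(z)]_{ij}=\theta'(z_{ij})$, so near $\overline z:=|F(\overline x)|\in\mathbb{R}_+^{m\times n}$ both $\vartheta$ and $\nabla\vartheta$ are locally Lipschitz by condition (C.1).

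For \eqref{lip-vtheta1} I would first pass through the absolute value. Fixing any $\alpha_\vartheta>{\rm lip}\,\vartheta(\overline z)$ and a radius on which $\vartheta$ is $\alpha_\vartheta$-Lipschitz, for $x$ near $\overline x$ both $|F(x)|$ and $|\ell_F(x;\overline x)|$ lie in that ball, so nonexpansiveness of $|\cdot|$ gives
\[
\big|\vartheta(|F(x)|)-\vartheta(|\ell_F(x;\overline x)|)\big|\le \alpha_\vartheta\big\|F(x)-\ell_F(x;\overline x)\big\|_F.
\]
It then remains to bound the linearization error of $F$. Writing $F(x)-\ell_F(x;\overline x)=\int_0^1[F'(\overline x+t(x-\overline x))-F'(\overline x)](x-\overline x)\,dt$ and using a local Lipschitz bound for $F'$ with any $\alpha_{F'}>{\rm lip}\,F'(\overline x)$, the factor $\int_0^1 t\,dt=\tfrac12$ yields $\|F(x)-\ell_F(x;\overline x)\|_F\le(\alpha_{F'}/2)\interleave x-\overline x\interleave^2$. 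Combining the two bounds and taking $\overline\alpha_1:=\alpha_\vartheta\alpha_{F'}$ (which can be made to exceed the required product) gives \eqref{lip-vtheta1}.

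For \eqref{lip-vtheta2} I would treat the left-hand side as a first-order Taylor estimate for $\vartheta$ based at $\overline z$ and evaluated at $z:=|\ell_F(x;\overline x)|$. Since $\overline z$ and $z$ are both nonnegative, the entire segment $\overline z+t(z-\overline z)$, $t\in[0,1]$, stays in $\mathbb{R}_+^{m\times n}$, where $\vartheta$ is differentiable; hence the fundamental theorem of calculus gives
\[
\vartheta(z)-\vartheta(\overline z)-\langle\nabla\vartheta(\overline z),z-\overline z\rangle=\int_0^1\big\langle\nabla\vartheta(\overline z+t(z-\overline z))-\nabla\vartheta(\overline z),\,z-\overline z\big\rangle\,dt.
\]
With a local Lipschitz bound for $\nabla\vartheta$ of constant $\overline\alpha_2>{\rm lip}\,\nabla\vartheta(\overline z)$, the integral is at most $(\overline\alpha_2/2)\|z-\overline z\|_F^2$. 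Finally, nonexpansiveness of $|\cdot|$ gives $\|z-\overline z\|_F\le\|\ell_F(x;\overline x)-F(\overline x)\|_F=\|F'(\overline x)(x-\overline x)\|_F\le\|F'(\overline x)\|\interleave x-\overline x\interleave$, and squaring reproduces the right-hand side of \eqref{lip-vtheta2}.

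The two Taylor/descent-lemma computations are routine; the only genuine care lies in the bookkeeping of neighborhoods. The main (minor) obstacle is to select a single radius $\overline\delta_1>0$ small enough that simultaneously: (a) $\mathbb{B}_{\mathbb{X}}(\overline x,\overline\delta_1)\subset\mathcal O$ and $F'$ is $\alpha_{F'}$-Lipschitz there; (b) both $|F(x)|$ and $|\ell_F(x;\overline x)|$ land in the ball on which $\vartheta$ is $\alpha_\vartheta$-Lipschitz; and (c) the segment from $\overline z$ to $z$ stays in the ball on which $\nabla\vartheta$ is $\overline\alpha_2$-Lipschitz. Continuity of $F$ and of $x\mapsto\ell_F(x;\overline x)$ at $\overline x$ makes (b)--(c) hold for all small $\overline\delta_1$, so intersecting the finitely many admissible radii delivers the desired $\overline\delta_1$. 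Note that no differentiability of $\vartheta$ off $\mathbb{R}_+^{m\times n}$ is ever invoked, because every evaluation of $\vartheta$ and $\nabla\vartheta$ occurs at an absolute value.
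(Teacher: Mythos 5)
Your proposal is correct and follows essentially the same route as the paper's proof: both establish \eqref{lip-vtheta1} by combining the local Lipschitz bound on $\vartheta$ near $|F(\overline{x})|$ with the nonexpansiveness of the entrywise absolute value and the integral (mean-value) form of the linearization error of $F$, and both establish \eqref{lip-vtheta2} by a descent-lemma bound at $\overline{z}=|F(\overline{x})|$ followed by $\|\,|\ell_F(x;\overline{x})|-|F(\overline{x})|\,\|_F\le\|F'(\overline{x})\|\interleave x-\overline{x}\interleave$. The only cosmetic difference is that you derive the descent lemma inline via the fundamental theorem of calculus along the segment (observing it stays in $\mathbb{R}_+^{m\times n}$), whereas the paper cites it directly and applies it on the compact ball $\mathbb{B}(|F(\overline{x})|,\delta_1)$.
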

\begin{proof}
 Recall that $\theta\in\!\mathscr{L}$ is differentiable on the open interval  $I\supset\mathbb{R}_{+}$ with a strictly continuous derivative $\theta'$, so $\vartheta$ and $\nabla\vartheta$ are strictly continuous on an open set $\mathcal{V}\supset\mathbb{R}_{+}^{m\times n}$. Fix any $\epsilon>0$. There exists $\delta_1>0$ such that for all $y,y'\in\mathbb{B}(|F(\overline{x})|,\delta_1)$, 
 \begin{equation} \label{ineq1-major}
 |\vartheta(y)-\vartheta(y')|\le [{\rm lip}\,\vartheta(|F(\overline{x})|)\!+\!\epsilon]\|y-y'\|_F.
 \end{equation}
 By the continuous differentiability of $F$ at $\overline{x}$, there exists $\overline{\delta}_1>0$ such that for any $x\in\mathbb{B}_{\mathbb{X}}(\overline{x},\overline{\delta}_1)$, 	$\|F(x)-F(\overline{x})\|_F\le\delta_1$ and $\|F'(\overline{x})(x-\overline{x})\|_F\le\delta_1$. Note that for any $y,y'\in\mathbb{R}^{m\times n}$ and any $(i,j)\in[m]\times[n]$, $|\,|y_{ij}|-|y_{ij}'|\,|\le |y_{ij}-y_{ij}'|$. Then, 
 \begin{equation*}
 \|\,|F(x)|-|F(\overline{x})|\,\|_F\le\delta_1\ \ {\rm and}\ \  \|\,|\ell_{F}(x;\overline{x})|-|F(\overline{x})|\,\|_F\le \delta_1\quad\forall x\in\mathbb{B}_{\mathbb{X}}(\overline{x},\overline{\delta}_1).
 \end{equation*}
 As $F'$ is strictly continuous at $\overline{x}$, if necessary by reducing $\overline{\delta}_1$, for any $x,x'\in\mathbb{B}_{\mathbb{X}}(\overline{x},\overline{\delta}_1)$   
 \begin{equation*}
  \|F'(x)-F'(x')\|\le\big[{\rm lip}F'(\overline{x})\!+\!\epsilon\big]\!\interleave \!x-x'\interleave.
 \end{equation*}
  Pick any $x\in\mathbb{B}_{\mathbb{X}}(\overline{x},\overline{\delta}_1)$. Using \eqref{ineq1-major} with $y=|F(x)|$ and $y'=|\ell_{F}(x;\overline{x})|$ yields that 
  \begin{align*}
  &\big|\vartheta(|F(x)|)-\vartheta(|\ell_{F}(x;\overline{x})|)\big|\\
  &\le[{\rm lip}\,\vartheta(|F(\overline{x})|)+\epsilon]\,\big\|\,|F(x)|-|\ell_{F}(x;\overline{x})|\,\big\|_F \\
  &\le[{\rm lip}\,\vartheta(|F(\overline{x})|)+\epsilon]\,\big\|F(x)-F(\overline{x})+F'(\overline{x})(x\!-\!\overline{x})\big\|_F \\
  &=[{\rm lip}\,\vartheta(|F(\overline{x})|)\!+\!\epsilon]\,\Big\|\int_{0}^{1}\![F'(\overline{x}\!+t(x\!-\!\overline{x}))\!-\!F'(\overline{x})\big](x\!-\!\overline{x})dt\Big\|_F\\	
  &\le\frac{1}{2}[{\rm lip}\,\vartheta(|F(\overline{x})|)\!+\epsilon]\,[{\rm lip}F'(\overline{x})\!+\epsilon]\interleave\!x-\overline{x}\interleave^2
  \end{align*}
  where the equality is due to the mean-valued theorem. By the arbitrariness of $x\in\mathbb{B}_{\mathbb{X}}(\overline{x},\overline{\delta}_1)$, we obtain inequality \eqref{lip-vtheta1}. Since $\nabla\vartheta$ is strictly continuous on an open set $\mathcal{V}\supset\mathbb{R}_{+}^{m\times n}$, it is Lipschitz continuous on the compact set $\mathbb{B}(|F(\overline{x})|,\delta_1)$ (if necessary by shrinking $\delta_1$). By invoking the descent lemma (see \cite[Proposition A.24]{Bertsekas99}), there exists $\overline{\alpha}_2>{\rm lip}\,\nabla\vartheta(|F(\overline{x})|)$ such that for all $y,y'\in \mathbb{B}(|F(\overline{x})|,\delta_1)$, 
  \[
	|\vartheta(y')-\vartheta(y)-\langle\nabla\vartheta(y),y'-y\rangle|\le(\overline{\alpha}_2/2)\|y'-y\|_F^2. 
  \] 
  From the above arguements, for any $x\in\mathbb{B}_{\mathbb{X}}(\overline{x},\overline{\delta}_1)$ (if necessary by shrinking $\overline{\delta}_1$), $|\ell_{F}(x;\overline{x})|\in\mathbb{B}(|F(\overline{x})|,\delta_1)$. Now substituting $y'\!=|\ell_{F}(x;\overline{x})|$ and $y\!=|F(\overline{x})|$ into the above inequality yields the desired inequality \eqref{lip-vtheta2}. The proof is then completed. 
 \end{proof}

 Recall that $\psi$ is finite and convex on $\mathbb{R}^{p\times q}$, so it is strictly continuous on $\mathbb{R}^{p\times q}$. Using the same arguments as for the first part of Lemma \ref{local-lip1} gives the following result. 
%---------------------------------------------------------------------------------
 \begin{lemma} \label{local-lip2}
 Fix any $\overline{x}\!\in\mathbb{X}$. Let $\ell_{T}(x;\overline{x})\!:=T(\overline{x})+T'(\overline{x})(x\!-\!\overline{x})$ for $x\in\mathbb{X}$. Then, there exist $\overline{\delta}_2>0$ and $\overline{\alpha}_3>{\rm lip}\,\psi(T(\overline{x}))\,{\rm lip}\,T'(\overline{x})$ such that for any $x\in\mathbb{B}_{\mathbb{X}}(\overline{x},\overline{\delta}_2)$, 
 \[
	\big|\psi(T(x))-\psi(\ell_{T}(x;\overline{x}))\big|
	\le(\overline{\alpha}_3/2)\!\interleave\!x-\overline{x}\interleave^2.
 \]
\end{lemma}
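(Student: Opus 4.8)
The plan is to mirror the argument establishing inequality \eqref{lip-vtheta1} in Lemma \ref{local-lip1}, with the simplification that no inner absolute value appears here, so the step bounding $|\,|y_{ij}|-|y_{ij}'|\,|$ by $|y_{ij}-y_{ij}'|$ is unnecessary. The two ingredients I would combine are the local Lipschitz continuity of $\psi$ near $T(\overline{x})$ and a second-order control on the distance between $T(x)$ and its linearization $\ell_{T}(x;\overline{x})$.

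First I would record that, since $\psi$ is finite and convex on $\mathbb{R}^{p\times q}$, it is strictly continuous there, so for any fixed $\epsilon>0$ there exists $\delta_2>0$ with
\[
 |\psi(y)-\psi(y')|\le[{\rm lip}\,\psi(T(\overline{x}))+\epsilon]\,\|y-y'\|_F\quad{\rm for}\ {\rm all}\ y,y'\in\mathbb{B}(T(\overline{x}),\delta_2).
\]
Next, using the continuous differentiability of $T$ at $\overline{x}$, I would choose $\overline{\delta}_2>0$ so small that for every $x\in\mathbb{B}_{\mathbb{X}}(\overline{x},\overline{\delta}_2)$ both $\|T(x)-T(\overline{x})\|_F\le\delta_2$ and $\|T'(\overline{x})(x-\overline{x})\|_F\le\delta_2$ hold; this guarantees that $T(x)$ and $\ell_{T}(x;\overline{x})$ both lie in $\mathbb{B}(T(\overline{x}),\delta_2)$, so the displayed Lipschitz estimate applies with $y=T(x)$ and $y'=\ell_{T}(x;\overline{x})$. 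Shrinking $\overline{\delta}_2$ further if needed, the strict continuity of $T'$ at $\overline{x}$ also yields $\|T'(x)-T'(x')\|\le[{\rm lip}\,T'(\overline{x})+\epsilon]\interleave x-x'\interleave$ on $\mathbb{B}_{\mathbb{X}}(\overline{x},\overline{\delta}_2)$.

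The core estimate then follows by writing $T(x)-\ell_{T}(x;\overline{x})=\int_{0}^{1}[T'(\overline{x}+t(x-\overline{x}))-T'(\overline{x})](x-\overline{x})\,dt$ via the mean-value theorem and bounding the integrand through the Lipschitz modulus of $T'$, which gives $\|T(x)-\ell_{T}(x;\overline{x})\|_F\le\frac{1}{2}[{\rm lip}\,T'(\overline{x})+\epsilon]\interleave x-\overline{x}\interleave^2$. Combining this with the Lipschitz estimate for $\psi$ produces
\[
 |\psi(T(x))-\psi(\ell_{T}(x;\overline{x}))|\le\frac{1}{2}[{\rm lip}\,\psi(T(\overline{x}))+\epsilon][{\rm lip}\,T'(\overline{x})+\epsilon]\interleave x-\overline{x}\interleave^2,
\]
so setting $\overline{\alpha}_3:=[{\rm lip}\,\psi(T(\overline{x}))+\epsilon][{\rm lip}\,T'(\overline{x})+\epsilon]$, which is strictly larger than ${\rm lip}\,\psi(T(\overline{x}))\,{\rm lip}\,T'(\overline{x})$ because $\epsilon>0$, completes the argument.

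I do not anticipate a genuine obstacle: the proof is a routine transcription of the first half of Lemma \ref{local-lip1}. The only point requiring a little care is the order of the radius reductions—ensuring a single $\overline{\delta}_2$ simultaneously keeps $T(x)$ and $\ell_{T}(x;\overline{x})$ inside the ball $\mathbb{B}(T(\overline{x}),\delta_2)$ on which the Lipschitz bound for $\psi$ holds, while also staying within the neighborhood on which the Lipschitz bound for $T'$ is valid.
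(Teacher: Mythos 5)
Your proposal is correct and takes exactly the approach the paper intends: the paper's own proof of Lemma \ref{local-lip2} is a one-line remark noting that $\psi$, being finite convex, is strictly continuous, and that the result follows by "the same arguments as for the first part of Lemma \ref{local-lip1}" --- which is precisely the argument you have transcribed (local Lipschitz bound for $\psi$ near $T(\overline{x})$, mean-value representation of $T(x)-\ell_{T}(x;\overline{x})$, Lipschitz bound on $T'$, and the choice $\overline{\alpha}_3=[{\rm lip}\,\psi(T(\overline{x}))+\epsilon][{\rm lip}\,T'(\overline{x})+\epsilon]$). Your observation that the inner absolute-value step of Lemma \ref{local-lip1} is unnecessary here is also accurate.
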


 To close this section, we establish the outer semicontinuity of a multifunction, which will be used in the convergence analysis of Section \ref{sec4}.
%------------------------------------------------------------------------------------------
\begin{lemma}\label{lemma-osc}
 Let $\mathcal{F},\mathcal{G}:\mathbb{X}\times\mathbb{X}\rightrightarrows\mathbb{X}$ be defined by $\mathcal{F}(x,z)\!:=\!\nabla T(x)\partial\psi(\ell_{T}(x;z))$ and $\mathcal{G}(x,z):=\nabla F(x)[\nabla\vartheta(|F(z)|)\circ\partial|\ell_F(x,z)|]$, respectively. Then, for any $(\overline{x},\overline{z})\in\mathbb{X}\times\mathbb{X}$,
 \[
  \limsup_{(x,z)\to(\overline{x},\overline{z})}\mathcal{F}(x,z)\subset\mathcal{F}(\overline{x},\overline{z})\ \ {\rm and}\ \ 
  \limsup_{(x,z)\to(\overline{x},\overline{z})}\mathcal{G}(x,z)\subset\mathcal{G}(\overline{x},\overline{z}),
 \]
  i.e., the multifunctions $\mathcal{F}$ and $\mathcal{G}$ are both outer semicontinuous.   	
 \end{lemma}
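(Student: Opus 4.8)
The plan is to verify outer semicontinuity directly from its sequential characterization: for a multifunction $S\!:\mathbb{X}\times\mathbb{X}\rightrightarrows\mathbb{X}$ one has $\limsup_{(x,z)\to(\overline{x},\overline{z})}S(x,z)\subset S(\overline{x},\overline{z})$ as soon as, whenever $(x^k,z^k)\to(\overline{x},\overline{z})$ and $v^k\to v$ with $v^k\in S(x^k,z^k)$, it follows that $v\in S(\overline{x},\overline{z})$. Both $\mathcal{F}$ and $\mathcal{G}$ are obtained by composing the smooth single-valued data $\nabla T,\nabla F,\ell_T,\ell_F$ and $\nabla\vartheta(|F(\cdot)|)$ with the subdifferential of a convex function (namely $\psi$ for $\mathcal{F}$ and the absolute value for $\mathcal{G}$), and these convex subdifferentials are outer semicontinuous and locally bounded. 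The whole argument therefore reduces to a limit passage along sequences in which one extracts a convergent subsequence of the subgradient factor and invokes outer semicontinuity of that convex subdifferential.

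For $\mathcal{F}$, first I would fix $(\overline{x},\overline{z})$ and take any $v^k\to v$ with $v^k\in\mathcal{F}(x^k,z^k)$ and $(x^k,z^k)\to(\overline{x},\overline{z})$, writing $v^k=\nabla T(x^k)w^k$ with $w^k\in\partial\psi(\ell_{T}(x^k;z^k))$. Since $\psi$ is finite and convex on $\mathbb{R}^{p\times q}$, it is locally Lipschitz, so $\partial\psi$ is locally bounded; as $\ell_{T}(x^k;z^k)\to\ell_{T}(\overline{x};\overline{z})$ by continuity of $T$ and $T'$, the sequence $\{w^k\}$ lies in a bounded set and admits a subsequence $w^{k_j}\to\overline{w}$. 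Outer semicontinuity of $\partial\psi$ together with $\ell_{T}(x^{k_j};z^{k_j})\to\ell_{T}(\overline{x};\overline{z})$ then gives $\overline{w}\in\partial\psi(\ell_{T}(\overline{x};\overline{z}))$, and continuity of $\nabla T$ yields $v^{k_j}=\nabla T(x^{k_j})w^{k_j}\to\nabla T(\overline{x})\overline{w}$, whence $v=\nabla T(\overline{x})\overline{w}\in\mathcal{F}(\overline{x},\overline{z})$.

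The treatment of $\mathcal{G}$ follows the same scheme but must respect the entrywise $\circ$-structure. Writing $v^k=\nabla F(x^k)u^k$ with $u^k_{ij}\in[\nabla\vartheta(|F(z^k)|)]_{ij}\,\partial|[\ell_{F}(x^k,z^k)]_{ij}|$, I would factor $u^k_{ij}=[\nabla\vartheta(|F(z^k)|)]_{ij}\,s^k_{ij}$ with $s^k_{ij}\in\partial|[\ell_{F}(x^k,z^k)]_{ij}|\subset[-1,1]$. The subgradient factor $s^k$ is then automatically bounded, and since $\theta'$ is strictly continuous and $F\in C^1$, we have $\nabla\vartheta(|F(z^k)|)\to\nabla\vartheta(|F(\overline{z})|)$. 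Passing to a subsequence with $s^{k_j}\to\overline{s}$, each $[\ell_{F}(x^{k_j},z^{k_j})]_{ij}\to[\ell_{F}(\overline{x},\overline{z})]_{ij}$, so outer semicontinuity of $\partial|\cdot|$ gives $\overline{s}_{ij}\in\partial|[\ell_{F}(\overline{x},\overline{z})]_{ij}|$. Hence $u^{k_j}\to\overline{u}$ with $\overline{u}\in\nabla\vartheta(|F(\overline{z})|)\circ\partial|\ell_{F}(\overline{x},\overline{z})|$, and continuity of $\nabla F$ yields $v=\nabla F(\overline{x})\overline{u}\in\mathcal{G}(\overline{x},\overline{z})$.

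I expect the only genuinely delicate point to be the local boundedness of the subgradient factors, which is what forces the subsequence extraction: for $\mathcal{F}$ it rests on the finiteness (hence local Lipschitzness) of $\psi$, while for $\mathcal{G}$ it is automatic because $\partial|\cdot|\subset[-1,1]$; the remaining steps are routine continuity passages. It is worth noting that $\nabla\vartheta(|F(\cdot)|)$ is genuinely single-valued and continuous here, since $\vartheta$ is differentiable on a neighborhood of $\mathbb{R}_{+}^{m\times n}$ with $\theta'$ strictly continuous by (C.1)–(C.2), so that in $\mathcal{G}$ the $\circ$-product carries only the multivalued factor $\partial|\cdot|$, keeping the entrywise argument clean.
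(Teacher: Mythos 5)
Your proof is correct and follows essentially the same route as the paper: the sequential characterization of outer semicontinuity, the decomposition of each element as a continuous linear factor applied to a convex subdifferential, local boundedness of that subdifferential (the paper cites \cite[Theorem 9.13 \& Proposition 5.15]{RW98}, you invoke local Lipschitzness of the finite convex function, which is the same fact) to extract a convergent subsequence, and outer semicontinuity of the convex subdifferential plus continuity of $\nabla T$, $\nabla F$, $\ell_T$, $\ell_F$ to pass to the limit. The only difference is one of completeness, not of method: the paper carries this out explicitly only for $\mathcal{F}$ and dismisses $\mathcal{G}$ with ``similar arguments,'' whereas you spell out the entrywise $\circ$-structure for $\mathcal{G}$, correctly noting that the weight $\nabla\vartheta(|F(\cdot)|)$ is single-valued and continuous so that only the factor $\partial|\cdot|\subset[-1,1]$ is multivalued.
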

 \begin{proof}
  Fix any $(\overline{x},\overline{z})\in\mathbb{X}\times\mathbb{X}$. Pick any $\overline{u}\in\limsup_{(x,z)\to(\overline{x},\overline{z})}\mathcal{F}(x,z)$. By \cite[Definition 5.4]{RW98}, there exist $(x^k,z^k)\to(\overline{x},\overline{z})$ and $u^k\to\overline{u}$ with $u^k\in\mathcal{F}(x^k,z^k)$ for each $k\in\mathbb{N}$. By the expression of $\mathcal{F}$, for each $k\in\mathbb{N}$, there exists $y^k\in\partial\psi(\ell_{T}(x^k,z^k))$ such that $u^k=\nabla T(x^k)y^k$. Recall that $\psi$ is strictly continuous by its finite convexity. From \cite[Theorem 9.13 \& Proposition 5.15]{RW98}, the sequence $\{y^k\}_{k\in\mathbb{N}}$ is bounded. If necessary by taking a subsequence, we assume that $y^k\to\overline{y}$ as $k\to\infty$. Together with the outer semicontinuity of $\partial\psi$ and the continuity of $\ell_{T}(\cdot,\cdot)$, we have $\overline{y}\in\partial\psi(\ell_{T}(\overline{x},\overline{z}))$. Recall that $u^k\to \overline{u}$ and $u^k=\nabla T(x^k)y^k$ for each $k\in\mathbb{N}$. From the continuity of the mapping $T$, we obtain $\overline{u}=\nabla T(\overline{x})\overline{y}\in\nabla T(\overline{x})\partial\psi(\ell_{T}(\overline{x};\overline{z}))=\mathcal{F}(\overline{x},\overline{z})$. By the arbitrariness of $\overline{u}$, the first inclusion holds. Using the similar arguments leads to the second one. 
\end{proof}
%--------------------------------------------------------------------------------------
\section{Inexact proximal MM algorithm}\label{sec3}

 To state the basic idea of our inexact proximal MM algorithm, define $\ell_{F}\!:\mathcal{O}\times\mathcal{O}\to\mathbb{R}^{n\times m}$ and $\ell_{T}\!:\mathcal{O}\times\mathcal{O}\to\mathbb{R}^{p\times q}$ by
 \begin{equation}\label{ellFG}
  \ell_F(x;z):=F(z)+F'(z)(x-z)\ \ {\rm and}\ \  \ell_T(x;z):=T(z)+T'(z)(x-z).
 \end{equation}
 Clearly, for each $k\in\mathbb{N}$, $\ell_F(\cdot;x^k)$ and $\ell_T(\cdot;x^k)$ are the linear approximation of $F$ and $T$ at $x^k$, respectively. By Lemma \ref{local-lip1} for $\overline{x}=x^k$, there are $\alpha_{1,k}>{\rm lip}\,\vartheta(|F(x^k)|)\,{\rm lip}\,F'(x^k)$ and $\alpha_{2,k}>{\rm lip}\,\nabla\vartheta(|F(x^k)|)$ such that for any $x$ sufficiently close to $x^k$, 
\begin{align}\label{ineq1-majorF}
 \vartheta(|F(x)|)
 &\le\vartheta(|\ell_F(x;x^k)|)+(\alpha_{1,k}/2)\interleave\!x-x^k\interleave^2 \nonumber \\
 &\le\vartheta(|F(x^k)|)+\langle\nabla\vartheta(|F(x^k)|), |\ell_F(x;x^k)|-|F(x^k)|\rangle\nonumber \\
 &\quad +\frac{1}{2}(\alpha_{1,k}+\alpha_{2,k}\|F'(x^k)\|^2)\interleave\!x-x^k\!\interleave^2.
\end{align}
 By invoking Lemma \ref{local-lip2} with $\overline{x}=x^k$, there exists $\alpha_{3,k}>{\rm lip}\,\psi(T(x^k))\,{\rm lip}\,T'(x^k)$ such that for any $x$ sufficiently close to $x^k$, 
\begin{equation}\label{ineq1-majorG}
 \psi(T(x))\le \psi(\ell_{T}(x;x^k))+(\alpha_{3,k}/2)\interleave\!x-x^k\!\interleave^2.
\end{equation}
 For each $k\in\mathbb{N}$, write $L_k\!:=\alpha_{1,k}+\alpha_{2,k}\|F'(x^k)\|^2+\nu\alpha_{3,k}$. From inequalities \eqref{ineq1-majorF}-\eqref{ineq1-majorG} and the expression of $\Theta$, for any $x$ sufficiently close to $x^k$, it holds that 
\begin{equation*}
 \Theta(x)\le \langle\nabla\vartheta(|F(x^k)|),|\ell_{F}(x;x^k)|\rangle +\nu\psi(\ell_{T}(x;x^k))+\lambda h(x)+\!\frac{L_k}{2}\interleave\!x-x^k\!\interleave^2+C^k
\end{equation*}
with $C^k=\vartheta(|F(x^k)|)-\langle\nabla\vartheta(|F(x^k)|),|F(x^k)|\rangle$. 
Thus, by choosing a PD linear operator $\mathcal{Q}_k\!:\mathbb{X}\to \mathbb{X}$ with $\mathcal{Q}_k\succeq L_k\mathcal{I}$, for any $x$ sufficiently close to $x^k$,
\begin{equation*}
 \Theta(x)\le\!\Theta_k(x)\!:=\!\langle\nabla\vartheta(|F(x^k)|), |\ell_{F}(x;x^k)|\rangle+\nu\psi(\ell_T(x;x^k))+\lambda h(x)+\frac{1}{2}\interleave\!x-x^k\interleave_{\mathcal{Q}_k}^2 +C^k.
\end{equation*}
 Together with $\Theta_k(x^k)=\Theta(x^k)$, $\Theta_k$ is a strongly convex majorization of $\Theta$ at $x^k$. 
 
 The fact that $\Theta_k$ is a strongly convex majorization of $\Theta$ at $x^k$  inspires us to propose a proximal MM algorithm, for which we search an appropriate constant $L_k$ and minimize the associated $\Theta_k$ synchronously in each step. Among others, an inexact minimizer of $\Theta_k$ satisfying a mild condition is enough to guarantee the practibility of the proposed algorithm, whose iterate steps are described as follows. 
%-----------------------------------------------------------------------------------------
\begin{algorithm}[h] 
\caption{\label{iPMM}(\bf iPMM for solving problem \eqref{model})}
 \textbf{Initialization:} Input $\theta\in\!\mathscr{L},\nu>0$ and $\lambda>0$. Choose  parameters $\varrho>1,0<\underline{\gamma}<\overline{\gamma}$, $\overline{\mu}>0$ and $\{\mu_k\}_{k\in \mathbb{N}}\subset(0,\overline{\mu}]$, and an initial point $x^0\in\mathbb{X}$. Set $k=0$.  
	
 \textbf{while} the stopping conditions are not satisfied \textbf{do}	
 \begin{enumerate}
  \item Choose $\gamma_{k,0}\in[\underline{\gamma},\overline{\gamma}]$ and compute $C^k=\vartheta(|F(x^k)|)-\langle\nabla\vartheta(|F(x^k)|),|F(x^k)|\rangle$.
		
  \item \textbf{for} $j=0,\,1,\,2,\cdots$
		\begin{itemize}
		 \item[{\bf(a)}] Choose a PD linear operator $\mathcal{Q}_{k,j}$ with $\gamma_{k,j}\mathcal{I}\preceq \mathcal{Q}_{k,j}\preceq \varrho\gamma_{k,j}\mathcal{I}$. Compute		 
		 \begin{align}\label{subprobk}
		  \min_{x\in\mathbb{X}}\ \Theta_{k,j}(x)
		  &:=\langle\nabla\vartheta(|F(x^k)|), |\ell_{F}(x;x^k)|\rangle +\nu\psi(\ell_T(x;x^k))\nonumber\\ 
		  &\qquad + \lambda h(x)+(1/2)\interleave\!x-x^k\!\interleave_{\mathcal{Q}_{k,j}}^2+C^k
		 \end{align}
		 to seek an inexact minimizer $x^{k,j}\in\mathbb{X}$ satisfying the condition that
		\begin{equation} \label{inexct-cond}
		 \Theta_{k,j}(x^{k,j})<\Theta(x^k)\ {\rm and}\  \Theta_{k,j}(x^{k,j})\!-\Theta_{k,j}(\overline{x}^{k,j})\le\frac{\mu_k}{2}(\Theta(x^k)-\Theta_{k,j}(x^{k,j})),
		\end{equation}
		where $\overline{x}^{k,j}$ denotes the unique optimal solution of subproblem (\ref{subprobk}).
			
		\item[{\bf(b)}] If $\Theta(x^{k,j})\le\Theta_{k,j}(x^{k,j})$, set $j_k=j$ and go to step 4; else let $\gamma_{k,j+1}=\varrho\gamma_{k,j}$.
		\end{itemize}
  \item \textbf{end (for)}
		
  \item Set $x^{k+1}=x^{k,j_k},\overline{x}^{k+1}\!=\overline{x}^{k,j_k}, \mathcal{Q}_{k}=\mathcal{Q}_{k,j_k}$. Let $k\gets k+1$ and go to step 1.
 \end{enumerate}
 \textbf{end (while)}
 \end{algorithm}
%-------------------------------------------------------------------------------------
 \begin{remark}\label{remark-alg}
 {\bf(a)} Note that the Lipschitz modulus of $F'$ and $T'$ at $x^k$, that of $\vartheta$ and $\nabla\vartheta$ at $|F(x^k)|$ and that of $\psi$ at $T(x^k)$ are generally unknown, and it is not easy to provide a good upper estimation for them. The inner for-end loop of Algorithm \ref{iPMM} is searching a desirable upper estimation $L_k$ for ${\rm lip}\,\vartheta(|F(x^k)|)\,{\rm lip}\,F'(x^k)+{\rm lip}\,\nabla\vartheta(|F(x^k)|)\|F(x^k)\|^2+\nu{\rm lip}\,\psi(T(x^k))\,{\rm lip}\,T'(x^k)$ and computing an inexact minimizer of the associated majorization $\Theta_k$ synchronously. 
	
 \noindent
 {\bf(b)} The inexactness criterion for $x^{k,j}$ consists of the two conditions in \eqref{inexct-cond}. Note that $\Theta(x^k)=\Theta_{k,j}(x^k)$ for each $k$ and $j$. The first one requires the iterate $x^{k,j}$ to reduce the objective value of \eqref{subprobk}, while the second one restricts the reduction to be at least  $({\mu_k}/{2})[\Theta_{k,j}(x^k)-\Theta_{k,j}(x^{k,j})]$. The inexactness criterion \eqref{inexct-cond} is practical because a lower estimation of the optimal value $\Theta_{k,j}(\overline{x}^{k,j})$ is easily achieved when applying a dual method to solve \eqref{subprobk}. A similar inexactness condition was adopted by Bonettini et al. \cite{Bonettini20} to design inexact forward-backward algorithms for minimizing the sum of a continuously differentiable function and a closed proper convex function. The inexactness criterion \eqref{inexct-cond} is significantly different from the one in \cite{Zhang17} that takes the form ${\rm dist}(0,\partial\Theta_{k,j}(x^{k,j}))\le\gamma_0\interleave x^{k,j}-x^k\interleave$ for model \eqref{model}. This subdifferential-type inexactness condition aims at the direct use of the convergence analysis technique under the KL framework in \cite{Attouch13}, and as was illustrated by \cite[Example 1]{Bonettini20}, it might not be necessarily true when the inexactness criterion \eqref{inexct-cond} is adopted. 
 
 \noindent
 {\bf(c)} Recall that $\theta'(t)\ge 0$ for all $t\ge 0$. For every $k\in\mathbb{N}$ and $j\in\{0\}\cup[j_k]$, the function $\Theta_{k,j}$ is strongly convex. By \cite[Theorem 23.8]{Roc70}, its subdifferential has the expression
 \begin{align*}
 \partial\Theta_{k,j}(x)
 &=\nabla F(x^k)\big[\nabla\vartheta(|F(x^k)|)\circ\partial|\ell_{F}(x;x^k)|\big]+\nu\nabla T(x^k)\partial\psi(\ell_{T}(x;x^k))\\
 &\quad\ \ +\lambda\partial h(x)+\mathcal{Q}_{k,j}(x-x^k)\quad\ \forall x\in{\rm dom}\,h. 
 \end{align*}  
 By comparing with Proposition \ref{subdiff-Phi}, it is immediate to obtain $\partial \Theta_{k,j_k}(x^k)=\partial\Theta(x^k)$.   
	 
 \noindent
 {\bf(d)} If $x^{k+1}=x^{k}$ for some $k$, from the strong convexity of $\Theta_{k,j}$, the second inequality of \eqref{inexct-cond} with $j=j_k$, and $\Theta(x^k)=\Theta_{k,j_k}(x^k)$, it is easy to have that 
 \[
 \underline{\gamma}\interleave x^{k+1}\!-\overline{x}^{k+1}\interleave^2\le\overline{\mu}[\Theta_{k,j}(x^k)-\Theta_{k,j}(x^{k+1})]=0, 
 \]
 and then $x^{k+1}=\overline{x}^{k+1}=x^k$. Along with $0\in\partial\Theta_{k,j_k}(\overline{x}^{k+1})=\partial \Theta_{k,j_k}(x^{k})=\partial\Theta(x^k)$, it follows that $x^{k}$ is a stationary point of \eqref{model}. According to this fact, one can use $\interleave x^{k+1}\!-x^k\interleave\le\epsilon^*$ for a small $\epsilon^*>0$ as the stopping condition of Algorithm \ref{iPMM}.  
 \end{remark}
	
 From step (2a) of Algorithm \ref{iPMM}, for each $k\in\mathbb{N}$, $x^k\in{\rm dom}\,h$. We close this section by showing that the sequence $\{x^{k}\}_{k\in\mathbb{N}}$ generated by Algorithm \ref{iPMM} is well defined. 
%------------------------------------------------------------------------------------------
 \begin{lemma}\label{well-define}
 For each $k\in \mathbb{N}$, the inner for-end loop stops within a finite number of steps, and consequently Algorithm \ref{iPMM} is well defined.  
 \end{lemma}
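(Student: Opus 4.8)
The plan is to fix $k$ and show that the inner acceptance test $\Theta(x^{k,j})\le\Theta_{k,j}(x^{k,j})$ in step (2b) is guaranteed to hold once $j$ is large enough; since each failure only increments $j$ through $\gamma_{k,j+1}=\varrho\gamma_{k,j}$, this forces termination after finitely many steps. I would assume $x^k$ is not a stationary point of \eqref{model}, as otherwise $\overline{x}^{k,j}=x^k$ and the stopping test of Remark \ref{remark-alg}(d) is already met. Everything rests on the local majorization derived just before the algorithm: combining \eqref{ineq1-majorF}-\eqref{ineq1-majorG} (Lemmas \ref{local-lip1}-\ref{local-lip2} with $\overline{x}=x^k$), there are $\overline{\delta}>0$ and $L_k=\alpha_{1,k}+\alpha_{2,k}\|F'(x^k)\|^2+\nu\alpha_{3,k}$ such that, writing $\Phi_k(x):=\langle\nabla\vartheta(|F(x^k)|),|\ell_F(x;x^k)|\rangle+\nu\psi(\ell_T(x;x^k))+\lambda h(x)+C^k$,
\begin{equation*}
\Theta(x)\le\Phi_k(x)+\tfrac{L_k}{2}\interleave x-x^k\interleave^2\qquad\text{for all}\ x\in\mathbb{B}_{\mathbb{X}}(x^k,\overline{\delta}).
\end{equation*}
Here $\Phi_k$ is convex and \emph{independent of} $j$, satisfies $\Phi_k(x^k)=\Theta(x^k)$, and $\Theta_{k,j}=\Phi_k+\tfrac12\interleave\cdot-x^k\interleave_{\mathcal{Q}_{k,j}}^2$. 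I would also record that, since $\theta'\ge0$ on $\mathbb{R}_+$ and $\psi,h$ are lower bounded, $\Phi_k$ is bounded below, so $M_k:=\Phi_k(x^k)-\inf_{\mathbb{X}}\Phi_k\in[0,\infty)$; and since $\gamma_{k,0}\in[\underline{\gamma},\overline{\gamma}]$ and $\varrho>1$, we have $\gamma_{k,j}=\gamma_{k,0}\varrho^{\,j}\to\infty$.

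The key step is to bound $\interleave x^{k,j}-x^k\interleave$ using only the first inequality of \eqref{inexct-cond}. Since $\mathcal{Q}_{k,j}\succeq\gamma_{k,j}\mathcal{I}$ and $\Theta_{k,j}(x^{k,j})<\Theta(x^k)=\Phi_k(x^k)$, I would write
\begin{equation*}
\tfrac{\gamma_{k,j}}{2}\interleave x^{k,j}-x^k\interleave^2\le\tfrac12\interleave x^{k,j}-x^k\interleave_{\mathcal{Q}_{k,j}}^2=\Theta_{k,j}(x^{k,j})-\Phi_k(x^{k,j})<\Phi_k(x^k)-\Phi_k(x^{k,j})\le M_k,
\end{equation*}
whence $\interleave x^{k,j}-x^k\interleave^2<2M_k/\gamma_{k,j}\to0$, i.e.\ $x^{k,j}\to x^k$ as $j\to\infty$. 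Consequently there is $j^*$ with $\gamma_{k,j}\ge L_k$ and $x^{k,j}\in\mathbb{B}_{\mathbb{X}}(x^k,\overline{\delta})$ for all $j\ge j^*$; for such $j$ the majorization together with $\interleave\cdot-x^k\interleave_{\mathcal{Q}_{k,j}}^2\ge\gamma_{k,j}\interleave\cdot-x^k\interleave^2\ge L_k\interleave\cdot-x^k\interleave^2$ gives
\begin{equation*}
\Theta(x^{k,j})\le\Phi_k(x^{k,j})+\tfrac{L_k}{2}\interleave x^{k,j}-x^k\interleave^2\le\Phi_k(x^{k,j})+\tfrac12\interleave x^{k,j}-x^k\interleave_{\mathcal{Q}_{k,j}}^2=\Theta_{k,j}(x^{k,j}),
\end{equation*}
so the test in step (2b) holds and the loop stops at some $j_k\le j^*$.

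To complete the well-definedness, I would check that a point $x^{k,j}$ meeting \eqref{inexct-cond} exists at every inner step: non-stationarity of $x^k$ gives $0\notin\partial\Theta_{k,j}(x^k)=\partial\Theta(x^k)$ by Remark \ref{remark-alg}(c), hence $\Theta_{k,j}(\overline{x}^{k,j})<\Theta(x^k)$, so the exact minimizer $\overline{x}^{k,j}$ satisfies the first inequality of \eqref{inexct-cond} strictly and the second trivially, and by continuity of $\Theta_{k,j}$ so does any sufficiently accurate inner approximation. I expect the genuine obstacle to be the middle step: the iterate $x^{k,j}$ is constrained only through the value-based criterion \eqref{inexct-cond}, not through any explicit proximity bound, so one must argue separately that it collapses onto $x^k$ as the prox weight $\gamma_{k,j}$ blows up. The resolution is that the first inequality of \eqref{inexct-cond} combined with the lower bound $M_k$ on $\Phi_k$ already yields the quantitative estimate $\interleave x^{k,j}-x^k\interleave^2<2M_k/\gamma_{k,j}$, which is exactly what the finite-termination argument needs.
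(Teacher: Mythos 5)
Your proof is correct, and its overall architecture matches the paper's: both arguments hinge on showing that the inexact iterates $x^{k,j}$ collapse onto $x^k$ as $\gamma_{k,j}=\gamma_{k,0}\varrho^{j}\to\infty$, and then invoke the local majorization from Lemmas \ref{local-lip1}--\ref{local-lip2} (i.e.\ $\Theta\le\Phi_k+\frac{L_k}{2}\interleave\cdot-x^k\interleave^2$ near $x^k$, with $\mathcal{Q}_{k,j}\succeq L_k\mathcal{I}$ for large $j$) to force the acceptance test $\Theta(x^{k,j})\le\Theta_{k,j}(x^{k,j})$. Where you differ is in how that collapse is established, and your device is cleaner: you exploit the lower boundedness of $h$ and $\psi$ (Assumption \ref{ass0}) plus the nonnegativity of $\langle\nabla\vartheta(|F(x^k)|),|\ell_F(\cdot;x^k)|\rangle$ to get a uniform lower bound on $\Phi_k$, whence the first inequality of \eqref{inexct-cond} yields the quantitative estimate $\interleave x^{k,j}-x^k\interleave^2<2M_k/\gamma_{k,j}$. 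The paper instead argues by contradiction and minorizes each of $g_{\overline{k}}$, $\psi\circ\ell_T(\cdot;x^{\overline{k}})$ and $h$ by affine functions built from subgradients at a fixed $\widehat{x}\in{\rm ri}({\rm dom}\,h)$, then passes to the limit; this only gives convergence, not a rate, and is somewhat more laborious (interestingly, the paper itself uses your lower-boundedness device later, in the proof of Proposition \ref{prop-subconverge}(i)). Your proposal is also more careful on one point the paper glosses over: you note that admissible inexact iterates $x^{k,j}$ exist at all only when $x^k$ is non-stationary (if $x^k$ minimizes $\Theta_{k,j}$, no point can satisfy the strict inequality $\Theta_{k,j}(x^{k,j})<\Theta(x^k)$), and you correctly dispose of the stationary case via Remark \ref{remark-alg}; the paper's proof silently presumes the $x^{\overline{k},j}$ exist for all $j$.
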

 \begin{proof}
 Suppose on the contrary that there exists $\overline{k}\in\mathbb{N}$ such that the inner for-end loop of Algorithm \ref{iPMM} does not stop within a finite number of steps, i.e., 
 \begin{equation}\label{aim-ineq30}
 \Theta(x^{\overline{k},j})>\Theta_{\overline{k},j}(x^{\overline{k},j})\quad{\rm for\ each}\ j\in\mathbb{N}.
 \end{equation}
 We first claim that $\lim_{j\to\infty}x^{\overline{k},j}= x^{\overline{k}}$. Indeed, from the first inequality of \eqref{inexct-cond} and the expression of $\Theta_{k,j}$, for each $j\in \mathbb{N}$, it holds that 
 \begin{align}\label{tineq1-Phi}
 \Theta(x^{\overline{k}})>\Theta_{\overline{k},j}(x^{\overline{k},j})
 &=\langle\nabla\vartheta(|F(x^{\overline{k}})|), |\ell_{F}(x^{\overline{k},j};x^{\overline{k}})| \rangle+\nu\psi(\ell_T(x^{\overline{k},j};x^{\overline{k}})) \nonumber\\ 
 &\quad\ +\lambda h(x^{\overline{k},j}) +\frac{1}{2}\interleave\!x^{\overline{k},j}\!-x^{\overline{k}}\!\interleave^2_{\mathcal{Q}_{\overline{k},j}}+C^{\overline{k}}.
 \end{align}
 Recall that ${\rm dom}\,h\ne\emptyset$. Along with its convexity, $\emptyset\ne{\rm ri}({\rm dom}\,h)\subset{\rm dom}\,\partial h$. Pick any $\widehat{x}\in{\rm ri}({\rm dom}\,h)$ and any $\widehat{v}\in\partial h(\widehat{x})$. Then, it holds that 
 \[
  h(x^{\overline{k},j})\ge h(\widehat{x})+\langle \widehat{v},x^{\overline{k},j}-\widehat{x}\rangle\quad\ \forall j\in \mathbb{N}.
 \]
 Let $g_{\overline{k}}(x)\!:=\langle\nabla\vartheta(|F(x^{\overline{k}})|), |\ell_{F}(x;x^{\overline{k}})|\rangle$ for $x\in\mathbb{X}$. Obviously, $g_{\overline{k}}$ is a finite convex function, which means that $\partial g_{\overline{k}}(\widehat{x})\ne\emptyset$. Similarly, we also have $\partial \psi(T(\widehat{x}))\ne\emptyset$. Pick any $\widehat{\xi}\in\partial g_{\overline{k}}(\widehat{x})$ and $\widehat{\zeta}\in \partial\psi(T(\widehat{x}))$. From the convexity of $g_{\overline{k}}$ and $\psi$, for each $j\in\mathbb{N}$, 
 \begin{align*}
  g_{\overline{k}}(x^{\overline{k},j})
  &\ge g_{\overline{k}}(\widehat{x})+\langle \widehat{\xi},x^{\overline{k},j}-\widehat{x}\rangle,\\
   \psi(\ell_T(x^{\overline{k},j};x^{\overline{k}}))
  &\ge \psi(T(\widehat{x}))+\langle \widehat{\zeta},\ell_T(x^{\overline{k},j};x^{\overline{k}})-T(\widehat{x})\rangle.
  \end{align*} 
  Together with the above inequality \eqref{tineq1-Phi}, it immediately follows that for each $j\in\mathbb{N}$,
  \begin{align*}
  \Theta(x^{\overline{k}})
  &>g_{\overline{k}}(\widehat{x})+\langle\widehat{\xi},x^{\overline{k},j}-\widehat{x}\rangle +\nu\psi(T(\widehat{x}))+\nu\langle \widehat{\zeta},\ell_T(x^{\overline{k},j};x^{\overline{k}})-T(\widehat{x})\rangle \\ &\quad+\lambda h(\widehat{x})+\lambda\langle \widehat{v},x^{\overline{k},j}-\widehat{x}\rangle+\frac{1}{2}\!\interleave\!x^{\overline{k},j}-x^{\overline{k}}\!\interleave_{\mathcal{Q}_{\overline{k},j}}^2+C^{\overline{k}}.
  \end{align*}
  Note that $\mathcal{Q}_{\overline{k},j}\succeq \gamma_{\overline{k},j}\mathcal{I}$ and $\lim_{j\to\infty}\gamma_{\overline{k},j}=\infty$. Passing the limit $j\to\infty$ to this inequality results in $\lim_{j\to\infty}x^{\overline{k},j}= x^{\overline{k}}$. The claimed limit holds. Now, for each sufficiently large $j$, by using inequalities \eqref{ineq1-majorF}-\eqref{ineq1-majorG} with $k=\overline{k}$ and $x=x^{\overline{k},j}$, we get 
  \begin{align*}
  \vartheta(|F(x^{\overline{k},j})|)
  &\le\vartheta(|F(x^{\overline{k}})|)+
	\langle\nabla\vartheta(|F(x^{\overline{k}})|),|\ell_{F}(x^{\overline{k},j};x^{\overline{k}})-|F(x^{\overline{k}})|\rangle \\
  &\quad+\frac{1}{2} (\alpha_{1,\overline{k}}+\alpha_{2,\overline{k}}\|F'(x^{\overline{k}})\|^2)\interleave\!x^{\overline{k},j}-x^{\overline{k}}\interleave^2 \\
  \psi(T(x^{\overline{k},j}))&\le\psi(\ell_T(x^{\overline{k},j};x^{\overline{k}}))+ ({\alpha_{3,\overline{k}}}/{2})\interleave\!x^{\overline{k},j}-x^{\overline{k}}\!\interleave^2.
  \end{align*}
  By adding these two inequlities together, it follows that for all sufficiently large $j$, 
  \begin{align*}
  \Theta(x^{\overline{k},j})&\le \langle\nabla\vartheta(|F(x^{\overline{k}})|),|\ell_{F}(x^{\overline{k},j};x^{\overline{k}})\rangle+\nu\psi(\ell_T(x^{\overline{k},j};x^{\overline{k}}))\\
  &\quad +\lambda h(x^{\overline{k},j}) +({L_{\overline{k}}}/{2})\interleave\!x^{\overline{k},j}-x^{\overline{k}}\!\interleave^2+C^{\overline{k}} 
  \end{align*}
  where $L_{\overline{k}}=\alpha_{1,\overline{k}}+\alpha_{2,\overline{k}}\|F'(x^{\overline{k}})\|^2+\nu\alpha_{3,\overline{k}}$ and $C^{\overline{k}}\!=\vartheta(|F(x^{\overline{k}})|)-\langle\nabla\vartheta(|F(x^{\overline{k}})|),|F(x^{\overline{k}})|\rangle$. Note that $\mathcal{Q}_{\overline{k},j}\succeq L_{\overline{k}}\mathcal{I}$ for all $j$ large enough. The above inequality, along with the expression of $\Theta_{\overline{k},j}(x^{\overline{k},j})$, implies that for all $j$ large enough, $\Theta(x^{\overline{k},j})\le\Theta_{\overline{k},j}(x^{\overline{k},j})$. Obviously, this contradicts inequality \eqref{aim-ineq30}. The proof is completed. 
  \end{proof}
 %---------------------------------------------------------------------------------------------
 \section{Convergence analysis of Algorithm \ref{iPMM}}\label{sec4}
	
 For the convenience of analysis, for each $k\in\mathbb{N}$ and $j\in \{0\}\cup[j_k]$, we define the function $f_{k,j}\!:\mathbb{X}\to\overline{\mathbb{R}}$ by
 \begin{equation}\label{fkj}
  f_{k,j}(x):=\Theta_{k,j}(x)-\Theta(x^k).
 \end{equation}
 Before analyzing the global convergence of Algorithm \ref{iPMM}, we take a closer look at the properties of the sequence $\{x^k\}_{k\in\mathbb{N}}$ and the objective value sequence $\{\Theta(x^k)\}_{k\in\mathbb{N}}$.   
%--------------------------------------------------------------------------------------
 \begin{proposition}\label{descent-Theta}
  Let $\{x^k\}_{k\in\mathbb{N}}$ be the sequence yielded by Algorithm \ref{iPMM}. Then, 
  \begin{itemize}
  \item [(i)] for each $k\in\mathbb{N}$ and each $j\in\{0\}\cup[j_k]$, it holds that
			\begin{equation*}%\label{aimineq1-Zk}
				\frac{\underline{\gamma}}{4(1\!+\!\overline{\mu})}\interleave\!x^k\!-\!x^{k,j}\interleave^2\le\frac{1}{4(1\!+\!\mu_k)}\interleave\!x^k\!-\!x^{k,j}\interleave_{\mathcal{Q}_{k,j}}^2\le -f_{k,j}(x^{k,j}).
			\end{equation*} 
			
  \item [(ii)] For each $k\in\mathbb{N}$, $\Theta(x^{k+1})\le\Theta(x^k)-\frac{\underline{\gamma}}{4(1+\overline{\mu})}\interleave\!x^{k+1}-x^k\!\interleave^2$. 
			
  \item[(iii)] The sequence $\{\Theta(x^k)\}_{k\in\mathbb{N}}$ is convergent with the limit denoted by $\varpi^*$.  
			
  \item[(iv)] $0\le -\sum_{k=1}^{\infty}f_{k-1,j_{k-1}}(x^k)=\sum_{k=1}^{\infty}[\Theta(x^{k-1})-\Theta_{k-1,j_{k-1}}(x^k)]<\infty$.
			
  \item[(v)] $\lim_{k\to\infty}\interleave x^{k+1}-x^k\interleave=0$ and $\lim_{k\to\infty}\interleave x^{k}-\overline{x}^k\interleave=0$. 
  \end{itemize}	
 \end{proposition}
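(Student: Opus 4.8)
The plan is to establish (i) first, since it is the core estimate, and then to derive (ii)--(v) from (i) together with the acceptance rule of step (2b), the lower boundedness of $\Theta$, and a telescoping argument.

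For (i), the left inequality is immediate from the parameter bounds: since $\mathcal{Q}_{k,j}\succeq\gamma_{k,j}\mathcal{I}\succeq\underline{\gamma}\mathcal{I}$ and $\mu_k\le\overline{\mu}$, we have $\interleave x^k\!-x^{k,j}\interleave_{\mathcal{Q}_{k,j}}^2\ge\underline{\gamma}\interleave x^k\!-x^{k,j}\interleave^2$ and $(1+\mu_k)^{-1}\ge(1+\overline{\mu})^{-1}$. The right inequality is the heart of the matter. The key observation is that, by \eqref{subprobk}, $\Theta_{k,j}$ equals a finite convex function plus $\frac12\interleave\cdot-x^k\interleave_{\mathcal{Q}_{k,j}}^2$, so it is strongly convex relative to the $\mathcal{Q}_{k,j}$-norm and its unique minimizer $\overline{x}^{k,j}$ obeys $\Theta_{k,j}(x)-\Theta_{k,j}(\overline{x}^{k,j})\ge\frac12\interleave x-\overline{x}^{k,j}\interleave_{\mathcal{Q}_{k,j}}^2$ for every $x$. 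Applying this with $x=x^k$ (using $\Theta_{k,j}(x^k)=\Theta(x^k)$) bounds $\interleave x^k\!-\overline{x}^{k,j}\interleave_{\mathcal{Q}_{k,j}}^2$ by $2[\Theta(x^k)-\Theta_{k,j}(\overline{x}^{k,j})]$, and with $x=x^{k,j}$ bounds $\interleave x^{k,j}\!-\overline{x}^{k,j}\interleave_{\mathcal{Q}_{k,j}}^2$ by $2[\Theta_{k,j}(x^{k,j})-\Theta_{k,j}(\overline{x}^{k,j})]$. Writing $x^k\!-x^{k,j}=(x^k\!-\overline{x}^{k,j})+(\overline{x}^{k,j}\!-x^{k,j})$, applying $\interleave u+v\interleave_{\mathcal{Q}_{k,j}}^2\le2\interleave u\interleave_{\mathcal{Q}_{k,j}}^2+2\interleave v\interleave_{\mathcal{Q}_{k,j}}^2$, and feeding the second condition of \eqref{inexct-cond} into both $\Theta_{k,j}(x^{k,j})-\Theta_{k,j}(\overline{x}^{k,j})$ and $\Theta(x^k)-\Theta_{k,j}(\overline{x}^{k,j})$ to control them by multiples of $-f_{k,j}(x^{k,j})=\Theta(x^k)-\Theta_{k,j}(x^{k,j})$, the terms collect to exactly $4(1+\mu_k)(-f_{k,j}(x^{k,j}))$. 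This bookkeeping with the precise constant $4(1+\mu_k)$ is the one delicate point, and the main obstacle of the proposition.

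Granting (i), I would prove (ii) by combining the acceptance rule of step (2b), namely $\Theta(x^{k+1})\le\Theta_{k,j_k}(x^{k,j_k})=\Theta(x^k)+f_{k,j_k}(x^{k,j_k})$, with the bound $-f_{k,j_k}(x^{k,j_k})\ge\frac{\underline{\gamma}}{4(1+\overline{\mu})}\interleave x^{k+1}\!-x^k\interleave^2$ from (i). Part (iii) then follows since $\{\Theta(x^k)\}_{k\in\mathbb{N}}$ is nonincreasing by (ii) and bounded below: $\vartheta(|F(\cdot)|)\ge0$ because $\theta(t)\ge\theta(0)=0$ for $t\ge0$ by (C.2), while $\psi$ and $h$ are lower bounded by Assumption \ref{ass0}(iii) and (i), so $\Theta$ is lower bounded and a nonincreasing lower-bounded sequence converges to some $\varpi^*$.

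For (iv), the displayed equality is the definition \eqref{fkj} evaluated at $x^k$, and each summand is nonnegative by the first inequality of \eqref{inexct-cond}. Finiteness comes from the acceptance rule $\Theta(x^k)\le\Theta_{k-1,j_{k-1}}(x^k)$, which yields $\Theta(x^{k-1})-\Theta_{k-1,j_{k-1}}(x^k)\le\Theta(x^{k-1})-\Theta(x^k)$; summing telescopes to $\Theta(x^0)-\varpi^*<\infty$ by (iii). Finally, for (v), summing the descent inequality of (ii) gives $\sum_k\interleave x^{k+1}\!-x^k\interleave^2<\infty$, hence $\interleave x^{k+1}\!-x^k\interleave\to0$; and the bound $\interleave x^{k,j}\!-\overline{x}^{k,j}\interleave_{\mathcal{Q}_{k,j}}^2\le\mu_k(-f_{k,j}(x^{k,j}))$ obtained along the way in (i), taken with the index shift $k\mapsto k\!-\!1,\,j=j_{k-1}$ and $\mathcal{Q}_{k-1,j_{k-1}}\succeq\underline{\gamma}\mathcal{I}$, gives $\underline{\gamma}\interleave x^k\!-\overline{x}^k\interleave^2\le\overline{\mu}\,(-f_{k-1,j_{k-1}}(x^k))$, whose right-hand side tends to $0$ by the convergence of the series in (iv), so $\interleave x^k\!-\overline{x}^k\interleave\to0$.
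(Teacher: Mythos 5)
Your proposal is correct and follows essentially the same route as the paper's proof: the strong-convexity growth bound at the exact minimizer $\overline{x}^{k,j}$, the splitting of $x^k-x^{k,j}$ through $\overline{x}^{k,j}$ with the inequality $\interleave u+v\interleave_{\mathcal{Q}_{k,j}}^2\le 2\interleave u\interleave_{\mathcal{Q}_{k,j}}^2+2\interleave v\interleave_{\mathcal{Q}_{k,j}}^2$, and the use of the second condition in \eqref{inexct-cond} to reach the constant $4(1+\mu_k)$ are exactly the paper's argument, and your derivations of (ii)--(v) from (i), the acceptance rule of step (2b), the lower boundedness of $\Theta$, and the summability in (iv) likewise coincide with the paper's.
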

 \begin{proof}
  {\bf(i)} Fix any $k\in\mathbb{N}$ and any $j\in\{0\}\cup[j_k]$. The first inequality is immediate by noting that $\mu_k\le\overline{\mu}$ and $\mathcal{Q}_{k,j}\succeq \underline{\gamma}\mathcal{I}$, so it suffices to prove the second inequality. From the strong convexity of $f_{k,j}$, for any $x',x\in {\rm dom}\,h$ and any $w\in\partial f_{k,j}(x)$, 
  \begin{equation}\label{sconvex}
  f_{k,j}(x')\ge f_{k,j}(x)+\langle w,x'-x\rangle +\frac{1}{2}\!\interleave\!x'-x\interleave_{\mathcal{Q}_{k,j}}^2.
  \end{equation}
  Recall that $\overline{x}^{k,j}$ is the unique optimal solution of \eqref{subprobk}, so $0\in\partial f_{k,j}(\overline{x}^{k,j})$. Substituting $x'=x^{k,j}, x=\overline{x}^{k,j}$ and $w=0$ into \eqref{sconvex} leads to $\frac{1}{2}\!\interleave x^{k,j}-\overline{x}^{k,j}\interleave_{\mathcal{Q}_{k,j}}^2 \le f_{k,j}(x^{k,j})-f_{k,j}(\overline{x}^{k,j})$. Along with the inexactness condition  \eqref{inexct-cond} and the expression of $f_{k,j}$, 
  \begin{equation}\label{temp-ineq41}
  \interleave x^{k,j}-\overline{x}^{k,j}\interleave_{\mathcal{Q}_{k,j}}^2 
  \le -\mu_kf_{k,j}(x^{k,j}).
  \end{equation}
  Substituting $x'=x^{k},x=\overline{x}^{k,j}$ and $w=0$ into \eqref{sconvex} and using  $f_{k,j}(x^k)=0$ yields that $\frac{1}{2}\!\interleave\!x^{k}-\overline{x}^{k,j}\interleave_{\mathcal{Q}_{k,j}}^2\le -f_{k,j}(\overline{x}^{k,j})$. Along with the second inequality of \eqref{inexct-cond}, we obtain
  \begin{equation}\label{temp-ineq42}
  \interleave x^{k}-\overline{x}^{k,j}\interleave_{\mathcal{Q}_{k,j}}^2 \le -(2+\mu_k)f_{k,j}(x^{k,j}),
  \end{equation}
  Combining inequalities \eqref{temp-ineq41}-\eqref{temp-ineq42} with the Cauchy-Schwartz inequality yields that 
  \begin{align}\label{temp-ineq43}
  \interleave x^k-x^{k,j}\interleave_{\mathcal{Q}_{k,j}}^2
  &\le 2\interleave x^k-\overline{x}^{k,j}\interleave_{\mathcal{Q}_{k,j}}^2
			+2\interleave\overline{x}^{k,j}\!-\!x^{k,j}\interleave_{\mathcal{Q}_{k,j}}^2\nonumber\\
  &\le -2(2+\mu_k)f_{k,j}(x^{k,j})-2\mu_kf_{k,j}(x^{k,j})\nonumber\\
  &= -4(1+\mu_k)f_{k,j}(x^{k,j}),
 \end{align}
 which implies that the second inequality holds. The proof of part (i) is completed.
		
 \noindent
 {\bf(ii)} Fix any $k\in\mathbb{N}$. From inequality \eqref{temp-ineq43} and  $\underline{\gamma}\mathcal{I}\preceq\gamma_{k,j}\mathcal{I}\preceq\mathcal{Q}_{k,j}$,it follows that 
 \begin{equation}\label{temp-ineq44}
 \frac{\underline{\gamma}}{4(1\!+\!\mu_k)}\interleave x^k-x^{k,j}\interleave^2\le\frac{1}{4(1\!+\!\mu_k)}\interleave x^k-x^{k,j}\interleave_{\mathcal{Q}_{k,j}}^2\le -f_{k,j}(x^{k,j}).  
 \end{equation}
 By taking $j=j_k$ for inequality \eqref{temp-ineq44} and using the definition of $x^{k+1}$,
 \[  
  f_{k,j_k}(x^{k+1}) \le -\frac{\underline{\gamma}}{4(1+\mu_k)}\interleave x^k-x^{k+1}\interleave^2 \le -\frac{\underline{\gamma}}{4(1+\overline{\mu})}\interleave x^{k}-x^{k+1}\interleave^2.  
 \]
 In addition, from step (2b) of Algorithm \ref{iPMM} and the definition of $x^{k+1}$, we have
 \begin{equation} \label{eq-descend}
  \Theta(x^{k+1})\le \Theta_{k,j_k}(x^{k+1})=f_{k,j_k}(x^{k+1})+\Theta(x^k).
 \end{equation}
 The above two inequalities imply that $\Theta(x^{k+1})\le\Theta(x^k)-\frac{\underline{\gamma}}{4(1+\overline{\mu})}\interleave\!x^{k+1}-x^k\!\interleave^2$. 
		
 \noindent
 {\bf(iii)} Recall that $h$ and $\psi$ are lower bounded. Along with the nonnegativity of $\vartheta$, the function $\Theta$ is lower bounded. From part (ii), it follows that the sequence $\{\Theta(x^k)\}_{k\in\mathbb{N}}$ is decreasing. Thus, the sequence $\{\Theta(x^k)\}_{k\in\mathbb{N}}$ is convergent.
		
 \noindent
 {\bf(iv)} From \eqref{temp-ineq43}, $-\sum_{k=1}^{\infty}f_{k-1,j_{k-1}}(x^k)\ge 0$. In addition, for any $l\in\mathbb{N}$, we have
 \[
  -\sum_{k=0}^{l}f_{k,j_k}(x^{k+1})=\!\sum_{k=0}^{l}\big[\Theta(x^k)-\Theta_{k,j_k}(x^{k+1})\big]\le\Theta(x^0)-\Theta(x^{l+1}),
 \] 
  where the inequality is using $\Theta(x^{k+1})\le\Theta_{k,j_k}(x^{k+1})$ for each $k$. Along with the lower boundedness of $\Theta$ by the proof of part (iii), we obtain $-\sum_{k=0}^{l}f_{k,j_k}(x^{k+1})<\infty$. The two sides show that $0\le -\sum_{k=1}^{\infty}f_{k-1,j_{k-1}}(x^k)<\infty$.
		
 \noindent
  {\bf(v)} The first limit follows by parts (ii) and (iii). For the second limit, from the strong convexity of $\Theta_{k,j}$, the second inequality of \eqref{inexct-cond}, and the definitions of $x^{k+1}$ and $\overline{x}^{k+1}$, 
  \[
   (\underline{\gamma}/2)\interleave\!x^{k+1}\!-\overline{x}^{k+1}\interleave^2\le \Theta_{k,j_k}(x^{k,j_k})-\Theta_{k,j_k}(\overline{x}^{k,j_k})
 	\le -(\overline{\mu}/2)f_{k,j_k}(x^{k+1})\quad\forall k\in\mathbb{N}.
  \]
  This, along with $\lim_{k\to\infty}f_{k,j_k}(x^k)=0$ implied by part (iv), gives the second limit.    
 \end{proof}
	
 By means of Proposition \ref{descent-Theta}, we are ready to verify that every accumulation point of the sequence $\{x^k\}_{k\in\mathbb{N}}$ is a stationary point of problem \eqref{model}.
 %----------------------------------------------------------------------------------------------
 \begin{proposition}\label{prop-subconverge}
  Let $\{x^k\}_{k\in\mathbb{N}}$ be the sequence generated by Algorithm \ref{iPMM}.  Suppose that the sequence $\{x^k\}_{k\in\mathbb{N}}$ is bounded. Denote by $\mathcal{X}^*$ the set of all accumulation points of $\{x^k\}_{k\in\mathbb{N}}$.  Then, the following statements are true.
  \begin{itemize}
  \item [(i)] The sequence $\{\gamma_k \}_{k\in\mathbb{N}}$ with $\gamma_{k}\!:=\gamma_{k,j_k}$  is bounded, i.e., there exists $\widetilde{\gamma}>0$ such that for all $k\in\mathbb{N}$, $\gamma_k\le\widetilde{\gamma}$. Consequently,  $\|\mathcal{Q}_k\|\le\varrho\widetilde{\gamma}$ for all $k\in\mathbb{N}$.
			
  \item [(ii)] The set $\mathcal{X}^*$ is nonempty and compact, $\Theta$ keeps the constant $\omega^*$ on the set $\mathcal{X}^*$, and $\mathcal{X}^*\subset\mathcal{S}^*$ where $\mathcal{S}^*$ denotes the stationary point set of problem \eqref{model}. 
  \end{itemize}
  \end{proposition}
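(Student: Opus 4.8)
The plan is to prove the two parts in order, using throughout that boundedness of $\{x^k\}_{k\in\mathbb{N}}$ confines the whole sequence, and (since $\interleave x^k-\overline{x}^k\interleave\to0$ by Proposition \ref{descent-Theta}(v)) the sequence $\{\overline{x}^k\}$ as well, to a compact set $K\subset\mathcal{O}$. For part (i), the idea is to turn the pointwise threshold $L_k$ from the proof of Lemma \ref{well-define} into a uniform one. Since $\{x^k\}\subset K$ and $F,T$ are continuously differentiable while $\vartheta,\nabla\vartheta,\psi$ and $F',T'$ are strictly continuous, all the local moduli ${\rm lip}\,\vartheta(|F(x^k)|)$, ${\rm lip}\,\nabla\vartheta(|F(x^k)|)$, ${\rm lip}\,F'(x^k)$, ${\rm lip}\,\psi(T(x^k))$, ${\rm lip}\,T'(x^k)$ together with $\|F'(x^k)\|$ are bounded uniformly in $k$. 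This yields constants $\overline{\alpha}_1,\overline{\alpha}_2,\overline{\alpha}_3$, a uniform radius $\delta>0$ and a uniform $\overline{L}>0$ so that the majorization inequalities \eqref{ineq1-majorF}--\eqref{ineq1-majorG} hold for every $k$ and every $x\in\mathbb{B}_{\mathbb{X}}(x^k,\delta)$ with $L_k$ replaced by $\overline{L}$. The remaining step is to force $x^{k,j}$ into $\mathbb{B}_{\mathbb{X}}(x^k,\delta)$ for large $\gamma_{k,j}$: mimicking the subgradient lower-bound estimate in the proof of Lemma \ref{well-define}, the first inequality of \eqref{inexct-cond} gives $\frac{1}{2}\gamma_{k,j}\interleave x^{k,j}-x^k\interleave^2\le a_k+b_k\interleave x^{k,j}\interleave$ with $a_k,b_k$ uniformly bounded over $K$, whence $\interleave x^{k,j}-x^k\interleave\to0$ uniformly as $\gamma_{k,j}\to\infty$. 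Thus there is a uniform $\Gamma$ such that $\gamma_{k,j}\ge\max\{\Gamma,\overline{L}\}$ makes the test $\Theta(x^{k,j})\le\Theta_{k,j}(x^{k,j})$ pass; since $\gamma_{k,j}=\varrho^{j}\gamma_{k,0}$ with $\gamma_{k,0}\in[\underline{\gamma},\overline{\gamma}]$, the backtracking terminates with $\gamma_k\le\max\{\overline{\gamma},\varrho\max\{\Gamma,\overline{L}\}\}=:\widetilde{\gamma}$, and $\|\mathcal{Q}_k\|\le\varrho\widetilde{\gamma}$ follows from $\mathcal{Q}_k\preceq\varrho\gamma_k\mathcal{I}$.

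For part (ii), nonemptiness and compactness of $\mathcal{X}^*$ are the standard facts that the cluster-point set of a bounded sequence is nonempty, closed and bounded. To see $\Theta$ is constant on $\mathcal{X}^*$, fix $\overline{x}\in\mathcal{X}^*$ with $x^{k_i}\to\overline{x}$. Since the first two terms of $\Theta$ are continuous at $\overline{x}\in\mathcal{O}$ and $h$ is lsc while $\Theta(x^k)\to\varpi^*$ by Proposition \ref{descent-Theta}(iii), passing to the limit shows $\overline{x}\in{\rm dom}\,h$ and $\Theta(\overline{x})\le\varpi^*$. For the reverse inequality I would use the subproblem: because $\overline{x}^{k+1}$ minimizes $\Theta_{k,j_k}$, we have $\Theta_{k,j_k}(\overline{x}^{k+1})\le\Theta_{k,j_k}(\overline{x})$, and along $k_i$ the right-hand side converges to $\Theta(\overline{x})$ (using $x^{k_i}\to\overline{x}$, $\ell_F(\overline{x};x^{k_i})\to F(\overline{x})$, $\ell_T(\overline{x};x^{k_i})\to T(\overline{x})$ and $\|\mathcal{Q}_{k_i}\|$ bounded from part (i)), while the left-hand side converges to $\varpi^*$ (by the two conditions in \eqref{inexct-cond} and $f_{k,j_k}(x^{k+1})\to0$ from Proposition \ref{descent-Theta}(iv)). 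Hence $\varpi^*\le\Theta(\overline{x})$, giving $\Theta(\overline{x})=\varpi^*=:\omega^*$.

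Finally, to show $\mathcal{X}^*\subset\mathcal{S}^*$, I would exploit the exact optimality $0\in\partial\Theta_{k,j_k}(\overline{x}^{k+1})$ which, by the expression in Remark \ref{remark-alg}(c), produces $\xi^k,u^k,w^k$ with $w^k\in\partial h(\overline{x}^{k+1})$, $\nabla F(x^k)\xi^k\in\mathcal{G}(\overline{x}^{k+1},x^k)$, $\nabla T(x^k)u^k\in\mathcal{F}(\overline{x}^{k+1},x^k)$ and $\nabla F(x^k)\xi^k+\nu\nabla T(x^k)u^k+\lambda w^k+\mathcal{Q}_k(\overline{x}^{k+1}-x^k)=0$. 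Along $k_i$ one has $\overline{x}^{k_i+1}\to\overline{x}$ and $\interleave\overline{x}^{k+1}-x^k\interleave\to0$, so $\mathcal{Q}_{k_i}(\overline{x}^{k_i+1}-x^{k_i})\to0$ by part (i); the sequences $\xi^k$ and $u^k$ are bounded because $\nabla\vartheta$ is locally bounded and $\partial|\cdot|\subset[-1,1]$ and $\partial\psi$ is locally bounded by the finite convexity of $\psi$, and then $w^k$ is bounded as well. Passing to a further subsequence along which the three converge, the continuity of $F',T'$, the outer semicontinuity of $\mathcal{F},\mathcal{G}$ from Lemma \ref{lemma-osc}, and the closedness of ${\rm gph}\,\partial h$ yield $0\in\nabla F(\overline{x})[\nabla\vartheta(|F(\overline{x})|)\circ\partial|F(\overline{x})|]+\nu\nabla T(\overline{x})\partial\psi(T(\overline{x}))+\lambda\partial h(\overline{x})=\partial\Theta(\overline{x})$ by Proposition \ref{subdiff-Phi}, so $\overline{x}\in\mathcal{S}^*$. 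I expect the main obstacle to be part (i): converting the pointwise local estimates of Lemmas \ref{local-lip1}--\ref{local-lip2} into uniform-in-$k$ bounds and controlling the inner iterate $x^{k,j}$ tightly enough to cap the backtracking parameter, since the majorizations hold only on a neighborhood of each $x^k$.
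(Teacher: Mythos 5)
Your proposal is correct and follows essentially the same route as the paper: for part (i), uniform bounds on the Lipschitz moduli over the compact set containing $\{x^k\}$ together with control of $\interleave x^{k,j}-x^k\interleave$ via the first condition of \eqref{inexct-cond} and the lower boundedness of $h,\psi$; for part (ii), subproblem optimality of $\overline{x}^{k+1}$, the inexactness criterion, boundedness of $\mathcal{Q}_k$ from part (i), Lemma \ref{lemma-osc}, and the closedness of ${\rm gph}\,\partial h$. The only difference is organizational: you cap the backtracking parameter directly, whereas the paper argues by contradiction (supposing $\gamma_k\to\infty$ along a subsequence and showing the acceptance test would then have been satisfied one inner step earlier), but the underlying estimates are identical.
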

  \begin{proof}
  {\bf(i)} Suppose on the contrary that the sequence $\{\gamma_k \}_{k\in\mathbb{N}}$ is unbounded. There exists an index set $\mathcal{K}\subset\mathbb{N}$ such that $\lim_{\mathcal{K}\ni k\to\infty}\gamma_k=\infty$. For each $k\in\mathcal{K}$, write $\widehat{\gamma}_k:=\varrho^{-1}\gamma_k=\gamma_{k,j_k-1}$. By step (2b) of Algorithm \ref{iPMM}, $\Theta(x^{k,j_k-1})>\Theta_{k,j_k-1}(x^{k,j_{k-1}})$. Along with the expression of $\Theta_{k,j_k-1}(x^{k,j_{k-1}})$ and $\mathcal{Q}_{k,j_{k-1}}\succeq\gamma_{k,j_{k-1}}\mathcal{I}=\widehat{\gamma}_k\mathcal{I}$, we have
  \begin{align}\label{aim-ineq44}
  \Theta(x^{k,j_k-1})&>\langle\nabla\vartheta(|F(x^k)|),|\ell_{F}(x^{k,j_k-1};x^k)|\rangle +\nu\psi(\ell_{T}(x^{k,j_k-1};x^k)) \nonumber\\
  &\quad\ +\lambda h(x^{k,j_k-1})+(\widehat{\gamma}_k/2)\!\interleave\!x^{k,j_k-1}-x^k\interleave^2+C^k.
 \end{align} 
 Next we claim that $\lim_{\mathcal{K}\ni k\to\infty}\interleave x^{k,j_k-1}\!-x^k\interleave=0$. Indeed, for any $k\in\mathcal{K}$, from the concavity of $\theta$ and $\theta(0)=0$, we deduce that $\vartheta$ is a concave function on $\mathbb{R}^{m\times n}$ and $\vartheta(0)=0$, which by the expression of $C^k$ implies that $C^k\ge 0$. Together with the expression of $\Theta_{k,j_k-1}$ and the nonnegativity of $\langle\nabla\vartheta(|F(x^k)|), |\ell_{F}(x;x^k)|\rangle$, we have
 \[ 
  \Theta_{k,j_k-1}(x^{k,j_k-1})-\nu\psi(\ell_T(x^{k,j_k-1};x^k))-\lambda h(x^{k,j_k-1})\ge\frac{1}{2}\interleave\!x^{k,j_k-1}\!-\!x^k\interleave_{\mathcal{Q}_{k,j_k-1}}^2
 \]
 which along with $\mathcal{Q}_{k,j_k-1}\succeq \gamma_{k,j_k-1}\mathcal{I}=\widehat{\gamma}_k\mathcal{I}$ and the first inequality of \eqref{inexct-cond} leads to 
 \begin{align*}
  ({\widehat{\gamma}_k}/{2})\!\interleave\!x^{k,j_k-1}\!-\!x^k\interleave^2 
  &\le \Theta_{k,j_k-1}(x^{k,j_k-1})-\nu\psi(\ell_T(x^{k,j_k-1};x^k))-\lambda h(x^{k,j_k-1})\\
  &\le \Theta(x^k)-\nu\psi(\ell_T(x^{k,j_k-1};x^k))-\lambda h(x^{k,j_k-1})\\
  & \le \Theta(x^0)-\nu\psi(\ell_T(x^{k,j_k-1};x^k))-\lambda h(x^{k,j_k-1}), 
 \end{align*}
  where the third inequality is due to Proposition \ref{descent-Theta} (ii). Recall that $h$ is lower bounded on its domain and $\psi$ is lower bounded by Assumption \ref{ass0}. Passing the limit $\mathcal{K}\ni k\to\infty$ and using $\lim_{\mathcal{K}\ni k \to \infty}\widehat{\gamma}_k=\infty$ leads to $\lim_{\mathcal{K}\ni k \to \infty}\interleave x^{k,j_k-1}-x^k\interleave=0$, and the claimed limit holds. Now, for each sufficiently large $k\in\mathcal{K}$, using inequalities \eqref{ineq1-majorF}-\eqref{ineq1-majorG} with $x=x^{k,j_k-1}$ yields that 
  \begin{align*}
  \vartheta(|F(x^{k,j_k-1})|)
  &\le\vartheta(|F(x^k)|)+
   \langle\nabla\vartheta(|F(x^k)|),|\ell_{F}(x^{k,j_k-1};x^k)|-|F(x^k)|\rangle \\
  &\quad+(1/2)\big[{\alpha_{1,k}}+\alpha_{2,k}\|F'(x^k)\|^2\big]\interleave x^{k,j_k-1}-x^k\interleave^2, \\
  \psi(T(x^{k,j_k-1}))&\le\psi(\ell_T(x^{k,j_k-1};x^k))+({\alpha_{3,k}}/{2}) \interleave x^{k,j_k-1}-x^k\interleave^2.
 \end{align*}
  Adding these two inequalities together, we have for all sufficiently large $k\in\mathcal{K}$, 
 \begin{align}\label{temp-ineq45}
  \Theta(x^{k,j_k-1})&\le \langle\nabla\vartheta(|F(x^k)|),|\ell_{F}(x^{k,j_k-1};x^k)|\rangle+\nu\psi(\ell_T(x^{k,j_k-1};x^k)) \\
  &\quad\ +\lambda h(x^{k,j_k-1})+({L_{k}}/{2})\interleave\!x^{k,j_k-1}-x^{k}\interleave^2+C^{k} \nonumber. 
  \end{align}
  For each $k\in\mathbb{N}$, write $M_k:={\rm lip}\,\vartheta(|F(x^k)|)\,{\rm lip}\,F'(x^k)+{\rm lip}\,\nabla\vartheta(|F(x^k)|)\|F'(x^k)\|^2+\nu{\rm lip}\,\psi(|T(x^k)|)\,{\rm lip}\,T'(x^k)$. Recall that $L_k$ is sufficiently close to $M_k$ from above. From the boundedness of $\{x^k\}_{k\in\mathbb{N}}$ and \cite[Theorem 9.2]{RW98}, the sequence $\{M_k\}_{k\in\mathbb{N}}$ is bounded, so is the sequence $\{L_{k}\}_{k\in\mathbb{N}}$. Recall that $\lim_{k\to\infty}\widehat{\gamma}_k=\infty$. Inequality \eqref{temp-ineq45} yields a contradiction to \eqref{aim-ineq44}.
		
 \noindent
 {\bf(ii)} As the sequence $\{x^k\}_{k\in\mathbb{N}}$ is bounded, the set $\mathcal{X}^*$ is nonempty. The compactness of $\mathcal{X}^*$ follows by noting that $\mathcal{X}^*=\cap_{l\in\mathbb{N}}\overline{\cup_{k\ge l}\{x^k\}}$, an intersection of compact sets. Pick any $x^*\in\mathcal{X}^*$. Then, there exists an index set $\mathcal{K}\subset\mathbb{N}$ such that $\lim_{\mathcal{K}\ni k\to\infty}x^k=x^*$. Along with Proposition \ref{descent-Theta} (v), we have $\lim_{\mathcal{K}\ni k\to\infty}x^{k-1}=x^*$ and $\lim_{\mathcal{K}\ni k\to\infty}\overline{x}^k=x^*$. For each $k\in\mathcal{K}$, from step (2b) of Algorithm \ref{iPMM} and the definitions of $x^k$ and $\overline{x}^k$,
 \begin{align*}
 \Theta(x^k)&\le \Theta_{k-1,j_{k-1}}(x^k)\le \Theta_{k-1,j_{k-1}}(\overline{x}^k)+(\overline{\mu}/2)(-f_{k-1,j_{k-1}}(x^k))\\
 &\le\langle\nabla\vartheta(|F(x^*)|), |\ell_{F}(x^*;x^{k-1})|\rangle +\nu\psi(\ell_T(x^*;x^{k-1}))+\lambda h(x^*)\\
 &\quad +(1/2)\interleave\!x^*-x^{k-1}\!\interleave_{\mathcal{Q}_{k}}^2+C^{k-1}+(\overline{\mu}/2)(-f_{k-1,j_{k-1}}(x^k))\\
 &\le\langle\nabla\vartheta(|F(x^*)|), |\ell_{F}(x^*;x^{k-1})|\rangle +\nu\psi(\ell_T(x^*;x^{k-1}))+ \lambda h(x^*)\\
  &\quad +(\widetilde{\gamma}/2)\interleave\!x^*-x^{k-1}\!\interleave^2+C^{k-1}+(\overline{\mu}/2)(-f_{k-1,j_{k-1}}(x^k)),
 \end{align*}
 where the second inequality is due to the second condition in \eqref{inexct-cond} and $\mu_k\le\overline{\mu}$, the third one is using the optimality of $\overline{x}^k=\overline{x}^{k-1,j_{k-1}}$ and the feasibility of $x^*$ to subproblem \eqref{subprobk}, and the last one is by $\|\mathcal{Q}_k\|\le\widetilde{\gamma}$. Passing the limit $\mathcal{K}\ni k\to\infty$ to the last inequality and using Proposition \ref{descent-Theta} (iii)-(iv) and $\lim_{\mathcal{K}\ni k\to\infty}x^{k-1}=x^*$ results in $\varpi^*\le \Theta(x^*)$. While from the lower semicontinuity of $\Theta$ and Proposition \ref{descent-Theta} (iii), we have $\varpi^*\ge\Theta(x^*)$. Thus, $\Theta(x^*)=\varpi^*$. To achieve the inclusion $\mathcal{X}^*\subset\mathcal{S}^*$, we only need to argue that $x^*\in\mathcal{S}^*$. Indeed, from the optimality of $\overline{x}^{k}$ to subproblem \eqref{subprobk} and Remark \ref{remark-alg} (c), it holds that
 \begin{align*}
  0 &\in\nabla F(x^k)\big(\nabla\vartheta(|F(x^{k-1})|)\circ\partial| \ell_{F}(\overline{x}^k;x^{k-1})|)+\nu\nabla T(x^{k-1})\partial \psi(\ell_T(\overline{x}^k;x^{k-1}))\nonumber\\
  &\quad +\lambda\partial h(\overline{x}^k)+ \mathcal{Q}_{k,j}(\overline{x}^k-x^{k-1}).
 \end{align*}
 Passing the limit $\mathcal{K}\ni k\to\infty$ to the above inclusion and using $\lim_{\mathcal{K}\ni k\to\infty}x^{k-1}=x^*$ and $\lim_{\mathcal{K}\ni k\to\infty}\overline{x}^k=x^*$, Lemma \ref{lemma-osc} and the outer semicontinuity of $\partial h$ results in  
 \[ 
  0\in \nabla F(x^*)\big(\nabla\vartheta(|F(x^*)|)\circ\partial|F(x^*)|)+\nu \nabla T(x^*) \partial \psi(T(x^*))+\lambda\partial h(x^*).  
 \] 
 By comparing with Definition \ref{def-Spoint}, $x^*$ is a stationary point of \eqref{model}, i.e., $x^*\in\mathcal{S}^*$.   
 \end{proof}
 \begin{remark}\label{remark-bound}
  Note that  $\{x^k\}_{k\in\mathbb{N}}\subset\mathcal{L}_{\Theta(x^0)}\!:=\{x\in\mathbb{X}\ |\ \Theta(x)\le\Theta(x^0)\}$ by Proposition \ref{descent-Theta} (ii), so the boundedness assumption of $\{x^k\}_{k\in\mathbb{N}}$ in Proposition \ref{prop-subconverge} is satisfied when the set $\mathcal{L}_{\Theta(x^0)}$ is bounded. This obviously holds if $h$ has a bounded level set. 
 \end{remark}    
%--------------------------------------------------------------------------------------
 \subsection{Global convergence}\label{sec4.1}
	
 In this section, we will establish the convergence of the iterate sequence $\{x^k\}_{k\in\mathbb{N}}$ under the following assumption:
 \begin{aassumption}\label{ass1}
 \begin{enumerate}
 \item[(i)] the sequence $\{x^k\}_{k\in\mathbb{N}}$ given by Algorithm \ref{iPMM} is bounded;
 	
 \item[(ii)] $F$ and $T$ are twice continuously differentiable on the open set $\mathcal{O}$.
 \end{enumerate}
 \end{aassumption}
 
 As discussed in Remark \ref{remark-bound}, Assumption \ref{ass1} (i) is rather weak. Assumption \ref{ass1} (ii) requires the twice continuous differentiability of $F$ and $T$, but as illustrated in Section \ref{sec5}, it is satisfied by many image reconstruction models. 
  
 To prove the convergence of $\{x^k\}_{k\in\mathbb{N}}$, we define the following potential function
 \begin{equation}\label{Xifun}
 \Xi(w):=\vartheta(|\ell_F(x;z)|)+\nu\psi(\ell_T(x;z))+\lambda h(x)+({\widehat{\gamma}}/{2})\interleave x-z\interleave^2+t^2
 \end{equation}
 for any $w=(x,z,t)\in\mathbb{X}\times\mathbb{X}\times\mathbb{R}_{+}$, where  $\widehat{\gamma}=2\varrho\widetilde{\gamma}$ and $\widetilde{\gamma}$ is the constant from Proposition \ref{prop-subconverge} (i). The function $\Xi$ is closely related to the objective function $\Theta_{k,j}$ of \eqref{subprobk}. Indeed, from the concavity of $\vartheta$, for each $k\in\mathbb{N}$ and any $x\in\mathbb{X}$,
 \begin{equation*}
  \vartheta(|\ell_F(x;x^k)|)\le\vartheta(|F(x^k)|)+\langle\nabla\vartheta(|F(x^k)|),|\ell_{F}(x;x^k)|-|F(x^k)|\rangle.
 \end{equation*}
 Together with the expression of $\Theta_{k,j}$, for any $(x,t)\in\mathbb{X}\times\mathbb{R}_{+}$, it holds that 
 \begin{equation}\label{ineq-Xi}
 \Xi(x,x^k,t)\le \Theta_{k,j}(x)-\frac{1}{2}\interleave x-x^k\interleave_{\mathcal{Q}_k-\widehat{\gamma}\mathcal{I}}^2+t^2\quad\forall (x,t)\in\mathbb{X}\times\mathbb{R}_{+}.
 \end{equation} 
 The following lemma characterizes the subdifferential of the function $\Xi$ at any $w$.
 %----------------------------------------------------------------------------------------------- 
 \begin{lemma} \label{subdiff-Xi}	
 Under Assumption \ref{ass1} (i), the function $\Xi$ defined in \eqref{Xifun} is regular and at any  $w=(x,z,t)\in\mathbb{X}\times\mathbb{X}\times\mathbb{R}_{+}$,   $\partial\Xi(w)=\mathcal{V}_1(x,z)\times\mathcal{V}_2(x,z)\times\{2t\}$ with 
 \begin{align*}
  \mathcal{V}_1(x,z)&=\nabla F(x)\big[\nabla\vartheta(|\ell_F(x;z)|)\circ\partial|\ell_F(x;z)|\big]+\nu \nabla T(z)\partial\psi(\ell_T(x;z))\\
  &\quad+\lambda\,\partial h(x)+\widehat{\gamma}(x-z),\\
   \mathcal{V}_2(x,z)&=[D^2F(z)(x-z,\cdot)]^{*}\big[\nabla\vartheta(|\ell_F(x;z)|)\circ\partial|\ell_F(x;z)|\big] \nonumber \\
  &\quad+\nu[D^2T(z)(x-z,\cdot)]^{*}\partial\psi(\ell_T(x;z))+\widehat{\gamma}(z-x).
 \end{align*} 
 \end{lemma}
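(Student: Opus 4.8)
The plan is to view $\Xi$ as a finite sum of functions on the product space $\mathbb{X}\times\mathbb{X}\times\mathbb{R}$, each of which is either smooth, convex, or a composition of a regular strictly continuous outer function with a $C^1$ mapping, and then to invoke the chain rule \cite[Theorem 10.6]{RW98} together with the sum rule \cite[Corollary 10.9]{RW98}, in complete parallel with the proof of Proposition \ref{subdiff-Phi}. Writing $\Phi_F(x,z):=\ell_F(x;z)=F(z)+F'(z)(x-z)$, $\Phi_T(x,z):=\ell_T(x;z)=T(z)+T'(z)(x-z)$ and $f(y):=\vartheta(|y|)$ for the function of Lemma \ref{chain-sub}, we have
\[
\Xi(x,z,t)=(f\circ\Phi_F)(x,z)+\nu(\psi\circ\Phi_T)(x,z)+\lambda h(x)+(\widehat{\gamma}/2)\interleave x-z\interleave^2+t^2 .
\]
Under Assumption \ref{ass1}(ii), $F'$ and $T'$ are continuously differentiable, so $\Phi_F$ and $\Phi_T$ are $C^1$ mappings of $(x,z)$, which is exactly what the chain rule requires.

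First I would compute the adjoint Jacobians of $\Phi_F$ and $\Phi_T$. Differentiating $\Phi_F$ in $x$ yields the partial Jacobian $F'(z)$, whose adjoint is $\nabla F(z)$. Differentiating in $z$ and using the product rule, the copy $+F'(z)d$ produced by differentiating the summand $F(z)$ cancels the copy $-F'(z)d$ produced by differentiating the factor $(x-z)$ inside $F'(z)(x-z)$; invoking the symmetry of $D^2F(z)$ leaves the partial Jacobian $d\mapsto D^2F(z)(x-z,d)$, whose adjoint is $[D^2F(z)(x-z,\cdot)]^{*}$. Hence the adjoint of $\Phi_F'(x,z)$ maps $g\in\mathbb{R}^{m\times n}$ to $(\nabla F(z)g,\,[D^2F(z)(x-z,\cdot)]^{*}g)$, and the analogous identity holds for $\Phi_T$.

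Next I would assemble the subdifferential. Since $f$ is regular and strictly continuous by Lemma \ref{chain-sub}, its horizon subdifferential is $\{0\}$ and the constraint qualification of \cite[Theorem 10.6]{RW98} is automatic, so $f\circ\Phi_F$ is regular with $\partial(f\circ\Phi_F)(x,z)$ equal to the adjoint Jacobian applied to $\partial f(\ell_F(x;z))=\nabla\vartheta(|\ell_F(x;z)|)\circ\partial|\ell_F(x;z)|$; the same argument, with $\psi$ finite convex and therefore regular and locally Lipschitz, handles $\psi\circ\Phi_T$. The term $\lambda h(x)$ is regular by convexity, and the quadratic $(\widehat{\gamma}/2)\interleave x-z\interleave^2+t^2$ is smooth with gradient $(\widehat{\gamma}(x-z),\widehat{\gamma}(z-x),2t)$. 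Among these four summands only $\lambda h$ fails to be strictly continuous, so the constraint qualification for the sum rule \cite[Corollary 10.9]{RW98} holds, yielding both the regularity of $\Xi$ and the additivity of the subdifferentials. Collecting the $x$-, $z$- and $t$-components gives $\mathcal{V}_1(x,z)$, $\mathcal{V}_2(x,z)$ and $\{2t\}$; because $t$ enters only through the separable term $t^2$, the subdifferential factors as the stated product $\mathcal{V}_1\times\mathcal{V}_2\times\{2t\}$.

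The step I expect to be the main obstacle is the $z$-slot Jacobian of $\Phi_F$ and $\Phi_T$: this is the only place where the $C^2$ hypothesis in Assumption \ref{ass1}(ii) is genuinely used, since one must differentiate $z\mapsto F'(z)(x-z)$, track the cancellation of the first-order terms, and record the surviving second-order contribution through $D^2F(z)$ and its adjoint. The remaining points — verifying the chain-rule constraint qualification, which reduces to the strict continuity of $f$ and $\psi$, and checking that at most one summand is non-Lipschitz so that the exact sum rule applies — are routine and mirror the proof of Proposition \ref{subdiff-Phi}.
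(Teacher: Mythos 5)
Your proposal is correct and follows essentially the same route as the paper: the identical decomposition of $\Xi$ into $f\circ\ell_F(\cdot;\cdot)$, $\nu\,\psi\circ\ell_T(\cdot;\cdot)$, $\lambda h$ and a smooth quadratic, handled by the chain rule \cite[Theorem 10.6]{RW98} (via Lemma \ref{chain-sub} and the finite convexity of $\psi$) followed by the exact sum rule; the paper simply states the adjoint-Jacobian formulas without spelling out the first-order cancellation in the $z$-derivative that you work out. One remark: your computation yields $\nabla F(z)$ in the first slot of $\mathcal{V}_1$, whereas the lemma as stated writes $\nabla F(x)$; your version is the mathematically correct one, consistent both with the $\nu\nabla T(z)$ term and with how the lemma is later invoked in Proposition \ref{prop-Xi} (where $z=x^k$ and the term appears as $\nabla F(x^k)$), so this discrepancy is a typo in the paper rather than a gap in your argument.
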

 \begin{proof}
  Let $\widetilde{f}(x',z')\!:=\vartheta(|\ell_F(x';z')|)$ for $(x',z')\in\mathbb{X}\times\mathbb{X}$. Clearly, $\widetilde{f}=f\circ\ell_{F}(\cdot;\cdot)$, where $f$ is the same as in Lemma \ref{chain-sub}. From \cite[Theorem 10.6]{RW98} and the continuous differentiability of $\ell_{F}(\cdot,\cdot)$ by Assumption \ref{ass1} (iii), the function $\widetilde{f}$  is regular with 
  \[
   \partial\widetilde{f}(x,z)=\begin{pmatrix}
	\nabla F(x)\\ [D^2F(z)(x-z,\cdot)]^{*}   
	\end{pmatrix}\big[\nabla\vartheta(|\ell_F(x;z)|)\circ\partial|\ell_F(x;z)|\big]\quad\forall (x,z)\in\mathbb{X}\times\mathbb{X}.
  \]
  Let $g(x',z'):=\nu\psi(\ell_{T}(x';z'))$ for $(x',z')\in\mathbb{X}\times\mathbb{X}$. Recall that $\psi$ is a finite convex function. From \cite[Theorem 10.6]{RW98} and the continuous differentiability of $\ell_{T}(\cdot;\cdot)$ by Assumption \ref{ass1} (iii), the function $g$ is also regular with
  \[
   \partial g(x,z)=\begin{pmatrix}
			\nabla T(z)\\
			[D^2T(z)(x-z,\cdot)]^{*}             
		\end{pmatrix}\partial\psi(\ell_T(x;z))\quad\forall (x,z)\in\mathbb{X}\times\mathbb{X}.
  \]
  Combining the last two equations with the expression of $\Xi$ yields the second part. 
 \end{proof}
	
 In the sequel, write $w^k\!:=(\overline{x}^{k+1},x^{k},t^k)$ with $t^k=\!\sqrt{-(\mu_k/2)f_{k,j_k}(x^{k+1})}$ for each $k\in\!\mathbb{N}$, where $f_{k,j_k}$ is the function in \eqref{fkj}. The following proposition shows that $\Xi(w^k)$ is bounded by $\Theta(x^k)$, and ${\rm dist}(0,\partial\Xi(w^k))$ is upper bounded by $-f_{k,j_k}(x^{k+1})$.   
%--------------------------------------------------------------------------------------
 \begin{proposition} \label{prop-Xi}
  Under Assumption \ref{ass1}, the following two assertions hold. 
  \begin{itemize}
   \item[(i)] There exists $\overline{k}_1\in\mathbb{N}$ such that for all $k\ge \overline{k}_1$, 
			\[
			\Theta(x^{k+1})\le\Xi(w^k)\le \Theta(x^k)-\underline{\gamma}^{-1}\widehat{\gamma}\min\big\{2,1+\overline{\mu}/2\big\}f_{k,j_k}(x^{k+1}).
			\]
			
  \item[(ii)] There exists $\alpha>0$ such that for each $k\ge \overline{k}_1$ there is $\Gamma^k\in\partial \Xi(w^k)$ satisfying
			\begin{equation} \label{subgap22}
			\|\Gamma^k\|_F\le \alpha\sqrt{-f_{k,j_k}(x^{k+1})}.
			\end{equation}
  \end{itemize}
 \end{proposition}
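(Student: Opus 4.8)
The plan is to prove the two assertions separately, using \eqref{ineq-Xi} and the subdifferential formula of Lemma~\ref{subdiff-Xi} as the two workhorses, and to convert every quadratic quantity into a multiple of $-f_{k,j_k}(x^{k+1})$ through the spectral bounds $\underline{\gamma}\mathcal{I}\preceq\gamma_k\mathcal{I}\preceq\mathcal{Q}_k\preceq\varrho\widetilde{\gamma}\mathcal{I}$ (Proposition~\ref{prop-subconverge}(i)) together with the estimate $\interleave x^k-\overline{x}^{k+1}\interleave_{\mathcal{Q}_k}^2\le-(2+\mu_k)f_{k,j_k}(x^{k+1})$ obtained in the proof of Proposition~\ref{descent-Theta}(i). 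The index $\overline{k}_1$ would be chosen so large that for $k\ge\overline{k}_1$ the point $\overline{x}^{k+1}$ lies in the neighbourhood of $x^k$ on which the local majorizations of Lemmas~\ref{local-lip1}--\ref{local-lip2} are valid (legitimate since $\interleave\overline{x}^{k+1}-x^{k}\interleave\to0$ by Proposition~\ref{descent-Theta}(v)) and, simultaneously, the sequence-dependent Lipschitz constants $\alpha_{1,k},\alpha_{2,k},\alpha_{3,k}$ and $L_k$ are uniformly bounded (as established in the proof of Proposition~\ref{prop-subconverge}(i)).

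For the upper bound in (i) I would instantiate \eqref{ineq-Xi} at $x=\overline{x}^{k+1}$, $t=t^k$, $j=j_k$, giving $\Xi(w^k)\le\Theta_{k,j_k}(\overline{x}^{k+1})+\frac{1}{2}\interleave\overline{x}^{k+1}-x^k\interleave_{\widehat{\gamma}\mathcal{I}-\mathcal{Q}_k}^2+(t^k)^2$. Since $\overline{x}^{k+1}$ minimizes the $\mathcal{Q}_k$-strongly convex $\Theta_{k,j_k}$, strong convexity evaluated at $x^k$ yields $\Theta_{k,j_k}(\overline{x}^{k+1})\le\Theta(x^k)-\frac{1}{2}\interleave x^k-\overline{x}^{k+1}\interleave_{\mathcal{Q}_k}^2$, so the two quadratics merge into $\frac{1}{2}\interleave x^k-\overline{x}^{k+1}\interleave_{\widehat{\gamma}\mathcal{I}-2\mathcal{Q}_k}^2$, which is positive semidefinite because $\widehat{\gamma}=2\varrho\widetilde{\gamma}$ dominates $2\|\mathcal{Q}_k\|$ and is controlled by $\frac{1}{2}(\widehat{\gamma}\gamma_k^{-1}-2)\interleave x^k-\overline{x}^{k+1}\interleave_{\mathcal{Q}_k}^2$. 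Bounding this and $(t^k)^2=-\frac{\mu_k}{2}f_{k,j_k}(x^{k+1})$ by the $-f_{k,j_k}(x^{k+1})$-estimate and using $\mu_k\le\overline{\mu}$ then delivers an upper bound of the asserted form, the minimum $\min\{2,1+\overline{\mu}/2\}$ reflecting the case distinction according to whether $\overline{\mu}\le2$ or $\overline{\mu}>2$. The lower bound $\Theta(x^{k+1})\le\Xi(w^k)$ I would base on step~(2b), $\Theta(x^{k+1})\le\Theta_{k,j_k}(x^{k+1})$, together with the exact identity $\Xi(w^k)=\Theta_{k,j_k}(\overline{x}^{k+1})+E+\frac{1}{2}\interleave\overline{x}^{k+1}-x^k\interleave_{\widehat{\gamma}\mathcal{I}-\mathcal{Q}_k}^2+(t^k)^2$, where $E\le0$ is the concavity defect $\vartheta(|\ell_F(\overline{x}^{k+1};x^k)|)-\vartheta(|F(x^k)|)-\langle\nabla\vartheta(|F(x^k)|),|\ell_F(\overline{x}^{k+1};x^k)|-|F(x^k)|\rangle$. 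The second inequality of \eqref{inexct-cond} gives $\Theta_{k,j_k}(\overline{x}^{k+1})\ge\Theta_{k,j_k}(x^{k+1})-\frac{\mu_k}{2}(-f_{k,j_k}(x^{k+1}))$, which is cancelled exactly by $(t^k)^2$; it then remains to show $E+\frac{1}{2}\interleave\overline{x}^{k+1}-x^k\interleave_{\widehat{\gamma}\mathcal{I}-\mathcal{Q}_k}^2\ge0$, i.e. that the spare curvature $\widehat{\gamma}\mathcal{I}-\mathcal{Q}_k\succeq\varrho\widetilde{\gamma}\mathcal{I}$ absorbs the defect $|E|\le\frac{1}{2}\alpha_{2,k}\|F'(x^k)\|^2\interleave\overline{x}^{k+1}-x^k\interleave^2$ coming from \eqref{lip-vtheta2}.

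I expect this last curvature-domination step to be the main obstacle of the whole proposition: it is precisely where the choice $\widehat{\gamma}=2\varrho\widetilde{\gamma}$ and the uniform boundedness of $\{\alpha_{2,k}\|F'(x^k)\|^2\}$ (hence of $\{L_k\}$) for $k\ge\overline{k}_1$ must be used to guarantee $\widehat{\gamma}\mathcal{I}-\mathcal{Q}_k\succeq\alpha_{2,k}\|F'(x^k)\|^2\mathcal{I}$, so that the negative concavity defect is outweighed by the extra proximal curvature built into the potential function $\Xi$; this is the decisive role of replacing $\mathcal{Q}_k$ by the larger constant $\widehat{\gamma}$ in \eqref{Xifun}.

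For assertion~(ii) I would read off $\partial\Xi(w^k)=\mathcal{V}_1(\overline{x}^{k+1},x^k)\times\mathcal{V}_2(\overline{x}^{k+1},x^k)\times\{2t^k\}$ from Lemma~\ref{subdiff-Xi}, and exploit the optimality of $\overline{x}^{k+1}$ for \eqref{subprobk}, which by Remark~\ref{remark-alg}(c) supplies $\xi\in\partial|\ell_F(\overline{x}^{k+1};x^k)|$, $\zeta\in\partial\psi(\ell_T(\overline{x}^{k+1};x^k))$ and $\eta\in\partial h(\overline{x}^{k+1})$ with $\nabla F(x^k)[\nabla\vartheta(|F(x^k)|)\circ\xi]+\nu\nabla T(x^k)\zeta+\lambda\eta+\mathcal{Q}_k(\overline{x}^{k+1}-x^k)=0$. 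Reusing these same multipliers to select $v_1\in\mathcal{V}_1$ and $v_2\in\mathcal{V}_2$ and subtracting the vanishing optimality expression, $v_1$ collapses to $\nabla F(\overline{x}^{k+1})[\nabla\vartheta(|\ell_F(\overline{x}^{k+1};x^k)|)\circ\xi]-\nabla F(x^k)[\nabla\vartheta(|F(x^k)|)\circ\xi]+(\widehat{\gamma}\mathcal{I}-\mathcal{Q}_k)(\overline{x}^{k+1}-x^k)$, each summand being $O(\interleave\overline{x}^{k+1}-x^k\interleave)$: the first by local Lipschitz continuity of $\nabla F$ and $\nabla\vartheta$, the bound $\|\ell_F(\overline{x}^{k+1};x^k)-F(x^k)\|_F\le\|F'(x^k)\|\interleave\overline{x}^{k+1}-x^k\interleave$ and boundedness of $\xi$, the second by $\|\widehat{\gamma}\mathcal{I}-\mathcal{Q}_k\|\le\widehat{\gamma}+\varrho\widetilde{\gamma}$. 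The block $v_2$ is likewise $O(\interleave\overline{x}^{k+1}-x^k\interleave)$, because Assumption~\ref{ass1}(ii) and the boundedness of $\{x^k\}$ render $\|D^2F(x^k)\|,\|D^2T(x^k)\|$ uniformly bounded, so that $[D^2F(x^k)(\overline{x}^{k+1}-x^k,\cdot)]^{*}$ and $[D^2T(x^k)(\overline{x}^{k+1}-x^k,\cdot)]^{*}$ scale linearly in $\interleave\overline{x}^{k+1}-x^k\interleave$. Collecting the three components and using $\interleave\overline{x}^{k+1}-x^k\interleave^2\le\underline{\gamma}^{-1}(2+\overline{\mu})(-f_{k,j_k}(x^{k+1}))$ together with $|2t^k|=\sqrt{2\mu_k}\,\sqrt{-f_{k,j_k}(x^{k+1})}$ converts everything into a common multiple of $\sqrt{-f_{k,j_k}(x^{k+1})}$, yielding $\Gamma^k:=(v_1,v_2,2t^k)\in\partial\Xi(w^k)$ with $\|\Gamma^k\|_F\le\alpha\sqrt{-f_{k,j_k}(x^{k+1})}$. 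The only genuinely new ingredient here, relative to a plain inexact proximal step, is the second block $\mathcal{V}_2$ arising from differentiating the base point $x^k$ inside $\ell_F,\ell_T$, which is exactly why Assumption~\ref{ass1}(ii) is imposed.
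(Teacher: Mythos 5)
Your proposal is correct and follows essentially the same route as the paper: for part (i) you combine \eqref{ineq-Xi} at $x=\overline{x}^{k+1}$ with the inexactness condition \eqref{inexct-cond}, the estimate \eqref{temp-ineq42}, and the absorption of the concavity defect from \eqref{lip-vtheta2} by the spare curvature $\widehat{\gamma}\mathcal{I}-\mathcal{Q}_k$ (exactly the paper's mechanism, with your exact-identity bookkeeping being a cosmetic variant of its chain of inequalities), and for part (ii) you construct $\Gamma^k$ from the optimality multipliers of subproblem \eqref{subprobk} and bound each block by $O(\interleave\overline{x}^{k+1}-x^k\interleave)$ just as the paper does. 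The one point to make explicit is that in the case $\overline{\mu}>2$ your bound $2+\mu_k\le 4$ requires $\mu_k\le 2$ for all large $k$, which follows from the decay of $\mu_k$ (the paper invokes $\lim_{k\to\infty}\mu_k=0$) and must therefore be folded into your choice of $\overline{k}_1$.
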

 \begin{proof}
 {\bf(i)} By the definition of $w^k$ and \eqref{ineq-Xi} with $x=\overline{x}^{k+1}$, for each $k\in\mathbb{N}$, 
 \begin{align*}
  \Xi(w^k)&\le \Theta_{k,j_k}(\overline{x}^{k+1})+(\widehat{\gamma}/2)\!\interleave\overline{x}^{k+1}-x^k\interleave^2-({\mu_k}/{2})f_{k,j_k}(x^{k+1})\nonumber\\
 &=\Theta_{k,j_k}(\overline{x}^{k+1})+(\widehat{\gamma}/2)\!\interleave\overline{x}^{k+1}\!-x^k\interleave^2-({\mu_k}/{2})(\Theta_{k,j_k}(x^{k+1})-\Theta(x^k)).
 \end{align*}
 By the optimality of $\overline{x}^{k+1}=\overline{x}^{k,j_k}$ and the feasibility of $x^{k+1}$ to subproblem \eqref{subprobk}, we have $\Theta_{k,j_k}(\overline{x}^{k+1})\le\Theta_{k,j_k}(x^{k+1})$. By Assumption \ref{ass1} (iii), $\lim_{k\to\infty}\mu_k=0$, so there exists $\overline{k}_1\in\mathbb{N}$ such that for all $k\ge\overline{k}_1$, $\mu_k<2$. Together with the above inequality, 
 \begin{align}\label{temp-ineq46}
  \Xi(w^k)			&\le(1-{\mu_k}/{2})\Theta_{k,j_k}(x^{k+1})+({\mu_k}/{2})\Theta(x^k)+(\widehat{\gamma}/2)\!\interleave\overline{x}^{k+1}\!-x^k\interleave^2\nonumber\\
 &\le\Theta(x^k)+(\widehat{\gamma}/2)\!\interleave\overline{x}^{k+1}\!-x^k\interleave^2\quad\forall k\ge\overline{k}_1,
 \end{align}
 where the second inequality is using $\Theta_{k,j_k}(x^{k+1})<\Theta(x^k)$ and $2-\mu_k\!>0$ for all $k\ge\overline{k}_1$. By using \eqref{temp-ineq42} with $j=j_k, \mathcal{Q}_k\succeq\underline{\gamma}\mathcal{I}$ and $\mu_k\in(0,\min\{2,\overline{\mu}\}]$ for all $k\ge\overline{k}_1$, 
 \begin{equation*} 
 \interleave\overline{x}^{k+1}\!-x^k\interleave^2 	\le-\underline{\gamma}^{-1}(2+\min\{2,\overline{\mu}\})f_{k,j_k}(x^{k+1}),
 \end{equation*}
 where the inequality also uses $-f_{k,j_k}(x^{k,j_k})\ge 0$ by Proposition \ref{descent-Theta} (i).  Together with the above \eqref{temp-ineq46}, the desired second inequality then follows. For the first inequality, from Proposition \ref{descent-Theta} (v), we have $\lim_{k\to\infty}\interleave\overline{x}^{k+1}\!-x^k\interleave=0$. Now by invoking Lemma \ref{local-lip1}, for all $k\ge \overline{k}_1$ (if necessary by increasing $\overline{k}_1$), we have  
 \begin{align*}
  \vartheta(|\ell_{F}(\overline{x}^{k+1};x^k)|) 	
  &\ge \vartheta(|F(x^k)|) + \langle\nabla\vartheta(|F(x^k)|), |\ell_{F}(\overline{x}^{k+1};x^k)|-|F(x^k)|\rangle\\
  &\quad-({\alpha_{2,k}}/{2})\|F'(x^k)\|^2\interleave\overline{x}^{k+1}\!-x^k\interleave^2,
 \end{align*}
 where $\alpha_{2,k}$ is sufficiently close to ${\rm lip}\nabla\vartheta(|F(x^k)|)$ from above. Together with the expression of $\Xi(w^k)$, for all $k\ge \overline{k}_1$, it holds that   
 \begin{align}\label{temp-ineq47}
  \Xi(w^k)&\ge\vartheta(|F(x^k)|) + \langle\nabla\vartheta(|F(x^k)|), |\ell_{F}(\overline{x}^{k+1};x^k)|-|F(x^k)|\rangle-\frac{\mu_k}{2}f_{k,j_k}(x^{k+1})\nonumber\\
  &\quad -\frac{\alpha_{2,k}}{2}\|F'(x^k)\|^2\!\interleave\overline{x}^{k+1}-x^k\interleave^2 +\nu\psi(\ell_T(\overline{x}^{k+1};x^k))+\frac{\widehat{\gamma}}{2}\!\interleave\overline{x}^{k+1}-x^k\interleave^2\nonumber\\
  &=\Theta_{k,j_k}(\overline{x}^{k+1})-\frac{\mu_k}{2}f_{k,j_k}(x^{k+1})+\frac{\widehat{\gamma}}{2}\!\interleave\overline{x}^{k+1}-x^k\interleave^2-\frac{1}{2}\!\interleave\overline{x}^{k+1}-x^k\interleave_{\mathcal{Q}_{k}}^2 \nonumber\\
  &\quad-({\alpha_{2,k}}/{2})\|F'(x^k)\|^2\!\interleave\overline{x}^{k+1}-x^k\interleave^2\nonumber\\
  &\ge \Theta_{k,j_k}(x^{k+1})+\frac{\widehat{\gamma}-\alpha_{2,k}\|F'(x^k)\|^2}{2}\!\interleave\overline{x}^{k+1}\!-x^k\interleave^2-\frac{1}{2}\interleave\overline{x}^{k+1}-x^k\interleave_{\mathcal{Q}_{k}}^2,
 \end{align}
 where the second inequality is using the second condition of \eqref{inexct-cond}. By Assumption \ref{ass1} (ii) and \cite[Theorem 9.2]{RW98}, the sequence ${\rm lip}\nabla\vartheta(|F(x^k)|)$ is bounded. Recall that $\alpha_{2,k}$ is close enough to ${\rm lip}\nabla\vartheta(|F(x^k)|)$ from above, so is bounded. Thus, if necessary by increasing $\widetilde{\gamma}$ and $\overline{k}_1$, we have $\alpha_{2,k}\|F'(x^k)\|^2\le\widehat{\gamma}/2$; while by Proposition \ref{prop-subconverge} (i), $\|\mathcal{Q}_{k}\|\le\widehat{\gamma}/2$. Thus, for any $k\ge\overline{k}_1$,
 \[
  \big[\widehat{\gamma}-\alpha_{2,k}\|F'(x^k)\|^2\big]\!\interleave\overline{x}^{k+1}\!-x^k\interleave^2-\interleave\overline{x}^{k+1}-x^k\interleave_{\mathcal{Q}_{k}}^2\ge 0. 
 \]
  Together with the above \eqref{temp-ineq47} and $\Theta_{k,j_k}(x^{k+1})\ge\Theta(x^{k+1})$, we get $\Xi(w^k)\ge\Theta(x^{k+1})$. 
		
 \noindent
 {\bf(ii)} For each $k\in\mathbb{N}$, as $\overline{x}^{k+1}$ is the unique optimal solution of subproblem \eqref{subprobk} for $j=j_k$, from Remark \ref{remark-alg} (c) it follows that
 \begin{align*}
  0&\in\nabla F(x^k)\big[\nabla\vartheta(|F(x^k)|)\circ\partial|\ell_{F}(\overline{x}^{k+1};x^k)|\big]+\nu \nabla T(x^k)\partial\psi(\ell_T(\overline{x}^{k+1};x^k))\nonumber\\
  &\quad\ +\lambda\partial h(\overline{x}^{k+1})+\mathcal{Q}_{k}(\overline{x}^{k+1}-x^k).
 \end{align*}
  Then, for each $k\in\mathbb{N}$, there exists an element $v^k\in\partial|\ell_{F}(\overline{x}^{k+1};x^k)|$ such that 
  \begin{align*}
  \Gamma_1^k&:=\nabla F(x^k)\big[(\nabla\vartheta(|F(x^k)|)-\nabla\vartheta(|\ell_F(\overline{x}^{k+1};x^k)|))\circ v^k\big]+(\widehat{\gamma}\mathcal{I}-\mathcal{Q}_k)(\overline{x}^{k+1}\!-\!x^k)\\
  &\in\nabla F(x^k)\big[\nabla\vartheta(|\ell_F(\overline{x}^{k+1};x^k)|)\circ\partial|\ell_{F}(\overline{x}^{k+1};x^k)|\big]+\lambda\,\partial h(\overline{x}^{k+1})\nonumber\\
  &\quad+\nu \nabla T(x^k)\partial\psi(\ell_T(\overline{x}^{k+1};x^k))+\widehat{\gamma}(\overline{x}^{k+1}-x^k).
  \end{align*}
  In addition, for each $k\in\mathbb{N}$, pick any $\xi^k\in\partial\psi(\ell_T(\overline{x}^{k+1};x^k))$ and write
  \begin{align*}
  \Gamma_2^k&:=[D^2F(x^k)(\overline{x}^{k+1}\!-\!x^k,\cdot)]^*[\nabla\vartheta(|F(x^k)|)\circ v^k]+\widehat{\gamma}(x^k-\overline{x}^{k+1})\\
  &\quad\ +\nu [D^2T(x^k)(\overline{x}^{k+1}\!-\!x^k,\cdot)]^*\xi^k.
  \end{align*}
  By comparing with Lemma \ref{subdiff-Xi}, we have 	$\Gamma^k\!:=(\Gamma_1^k;\Gamma_2^k;2t^k)\in\partial\Xi(w^k)$ for each $k\in\mathbb{N}$. By Assumption \ref{ass1} (ii) and Proposition \ref{descent-Theta} (v), the sequence  $\{\overline{x}^k\}_{k\in\mathbb{N}}$ is bounded, which implies that $\{\ell_{F}(\overline{x}^{k+1};x^k)\}_{k\in\mathbb{N}}$ and $\{\ell_{T}(\overline{x}^{k+1};x^k)\}_{k\in\mathbb{N}}$ are bounded. Together with \cite[Theorem 9.13 \& Proposition 5.15]{RW98}, there exists a constant $\alpha_1\ge 0$ such that 
  \[
   \partial|\ell_{F}(\overline{x}^{k+1};x^k)|\subset \alpha_1\mathbb{B}\ \ {\rm and}\ \  \partial\|\ell_{T}(\overline{x}^{k+1};x^k)\|_1\subset \alpha_1\mathbb{B}, 
  \]
  where $\mathbb{B}$ is the unit ball of $\mathbb{R}^{m\times n}$ centered at the origin. Note that the linear operators $D^2F(x^k)(\cdot,\cdot)$ and $D^2T(x^k)(\cdot,\cdot)$ are bounded. Hence, if necessary by increasing $\alpha_1$, 
  \begin{equation}\label{Gam1-ineq}
  \|\Gamma_2^k\|_F\le \alpha_1\!\interleave\overline{x}^{k+1}-x^k\interleave\quad{\rm for\ all}\  k\in\mathbb{N}. 
  \end{equation}
  Recall that $\lim_{k\to\infty}\interleave\overline{x}^{k+1}-x^k\interleave=0$ and $\nabla\vartheta$ is strictly continuous on an open set containing $\mathbb{R}_{+}^{m\times n}$. Then, there exists a constant $\alpha_2>0$ such that for all $k\ge\overline{k}_1$ (if necessary by increasing $\overline{k}_1$), 
  \begin{align*}
   \|\nabla\vartheta(|F(x^k)|)-\nabla\vartheta(|\ell_F(\overline{x}^{k+1};x^k)|)\|_F
   &\le \alpha_2\|F(x^k)-\ell_F(\overline{x}^{k+1};x^k)\|_F\\
   &\le \alpha_2\|F'(x^k)\|\interleave\overline{x}^{k+1}\!-x^k\interleave. 
  \end{align*} 
  By the boundedness of $\{F'(x^k)\}_{k\in\mathbb{N}}$, if necessary by increasing  $\alpha_2$, for all $k\ge\overline{k}_1$,
  \begin{equation*}
  \big\|\nabla F(x^k)\big[(\nabla\vartheta(|F(x^k)|)-\nabla\vartheta(|\ell_F(\overline{x}^{k+1};x^k)|))\circ v^k\big]\big\|_F\le \alpha_2\interleave\overline{x}^{k+1}-x^k\interleave,
  \end{equation*}
  which implies that $\|\Gamma_1^k\|_F\le(2\widehat{\gamma}+\alpha_2)\interleave\overline{x}^{k+1}\!-x^k\interleave$. Together with \eqref{Gam1-ineq} and the definition of $\Gamma^k$,
  for all $k\ge\overline{k}_1$, it holds that 
  \begin{equation*}
  \|\Gamma^k\|_F\le \sqrt{\alpha_1^2+(2\widehat{\gamma}+\alpha_2)^2}\interleave\overline{x}^{k+1}\!-x^k\interleave+2t^k
  \le \alpha_3\sqrt{-f_{k,j_k}(x^{k+1})}+2t^k
  \end{equation*}
  with $\alpha_3=\frac{2+\overline{\mu}}{\gamma}\sqrt{\alpha_1^2+(2\widehat{\gamma}+\alpha_2)^2}$, where the second inequality is due to \eqref{temp-ineq42} and $f_{k,j_k}(x^{k+1})<0$. By recalling $t_k=\!\sqrt{-(\mu_k/2)f_{k,j_k}(x^{k+1})}$ and $\mu_k\le\overline{\mu}$, the desired result holds with $\alpha=\alpha_3+\sqrt{2\overline{\mu}}$. The proof is completed. 
 \end{proof}   
 
 Now we are ready to establish the convergence of $\{x^k\}_{k\in\mathbb{N}}$ under the KL property.  
%---------------------------------------------------------------------------------------
 \begin{theorem}\label{convergence}
 Suppose that $\Xi$ is a KL function, and that Assumption \ref{ass1} holds. Then, $\sum_{k=1}^{\infty}\interleave x^{k+1}-x^k\interleave<\infty$ and hence $\{x^k\}_{k\in\mathbb{N}}$ is convergent with limit $x^*\in\mathcal{S}^*$.  	
\end{theorem}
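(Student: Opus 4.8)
The plan is to run the Kurdyka--{\L}ojasiewicz (KL) analysis, but with two adjustments forced by the structure of Proposition~\ref{prop-Xi}: the genuinely monotone energy will be the objective sequence $\{\Theta(x^k)\}$ rather than the potential values $\{\Xi(w^k)\}$, and the KL inequality will be invoked on $\Xi$ at the auxiliary points $w^k=(\overline{x}^{k+1},x^k,t^k)$. Throughout set $\delta_k:=\Theta(x^k)-\varpi^*$, $\Delta_k:=\Xi(w^k)-\varpi^*$ and $v_k:=\sqrt{-f_{k,j_k}(x^{k+1})}\ge 0$, with $\varpi^*$ from Proposition~\ref{descent-Theta}(iii). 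First I would record the elementary relations. From \eqref{eq-descend} and the definition of $f_{k,j_k}$ one gets the clean descent $\delta_k-\delta_{k+1}\ge v_k^2$, so $\{\delta_k\}$ decreases to $0$; by Proposition~\ref{descent-Theta}(iv), $\sum_k v_k^2<\infty$, hence $v_k\to 0$, and by Proposition~\ref{descent-Theta}(i), $\interleave x^{k+1}-x^k\interleave^2\le\tfrac{4(1+\overline{\mu})}{\underline{\gamma}}v_k^2$, so it suffices to prove $\sum_k v_k<\infty$. Proposition~\ref{prop-Xi}(i) supplies, for $k\ge\overline{k}_1$, the bridging bounds $\delta_{k+1}\le\Delta_k\le\delta_k+c_1v_k^2$ with $c_1=\underline{\gamma}^{-1}\widehat{\gamma}\min\{2,1+\overline{\mu}/2\}$; in particular $\Delta_k\to 0$ and, shifting the index, $\delta_k\le\Delta_{k-1}$.

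Next I would identify the limit-point set of $\{w^k\}$. Since $\{x^k\}$ is bounded (Assumption~\ref{ass1}(i)), $t^k=v_k\sqrt{\mu_k/2}\to 0$, and $\interleave x^{k+1}-x^k\interleave\to 0$, $\interleave x^{k+1}-\overline{x}^{k+1}\interleave\to 0$ by Proposition~\ref{descent-Theta}(v), every limit point of $\{w^k\}$ has the form $(x^*,x^*,0)$ with $x^*\in\mathcal{X}^*$. Denoting this set by $\Omega$, it is nonempty and compact, $\mathrm{dist}(w^k,\Omega)\to 0$, and, because $\ell_F(x^*;x^*)=F(x^*)$ and $\ell_T(x^*;x^*)=T(x^*)$, Proposition~\ref{prop-subconverge}(ii) gives $\Xi(x^*,x^*,0)=\Theta(x^*)=\varpi^*$ for every $x^*\in\mathcal{X}^*$; thus $\Xi\equiv\varpi^*$ on $\Omega$. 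This is exactly the setting for a uniformized KL property: since $\Xi$ is a KL function, there exist $\varepsilon,\eta>0$ and $\varphi\in\Upsilon_{\eta}$ such that the KL inequality for $\Xi$ holds uniformly on $\{w:\mathrm{dist}(w,\Omega)<\varepsilon\}\cap[\varpi^*<\Xi<\varpi^*+\eta]$ (see, e.g., \cite{Attouch13}). Combined with $\mathrm{dist}(w^k,\Omega)\to 0$ and $\Delta_k\to 0$, the point $w^k$ lies in this region for all large $k$; the degenerate case $v_k=0$ (i.e. $x^{k+1}=x^k$), which yields a stationary point by Remark~\ref{remark-alg}(d) and finite termination, I would dispose of separately, so that henceforth all $v_k>0$.

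The main work is the telescoping. Applying the KL inequality to $\Xi$ at $w^{k-1}$ together with the relative-error bound $\mathrm{dist}(0,\partial\Xi(w^{k-1}))\le\alpha v_{k-1}$ of Proposition~\ref{prop-Xi}(ii) gives $\varphi'(\Delta_{k-1})\ge(\alpha v_{k-1})^{-1}$; since $\varphi'$ is nonincreasing and $\delta_k\le\Delta_{k-1}$, this upgrades to $\varphi'(\delta_k)\ge(\alpha v_{k-1})^{-1}$. Feeding it into the concavity estimate $\varphi(\delta_k)-\varphi(\delta_{k+1})\ge\varphi'(\delta_k)(\delta_k-\delta_{k+1})\ge\varphi'(\delta_k)v_k^2$ yields the key recursion
\[
 \frac{v_k^2}{v_{k-1}}\le\alpha\big[\varphi(\delta_k)-\varphi(\delta_{k+1})\big],\qquad\text{whence}\qquad v_k\le\tfrac12 v_{k-1}+\tfrac{\alpha}{2}\big[\varphi(\delta_k)-\varphi(\delta_{k+1})\big]
\]
by the arithmetic--geometric mean inequality. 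Summing over $k$ and telescoping the nonnegative, bounded sequence $\varphi(\delta_k)$ gives $\sum_k v_k<\infty$, hence $\sum_k\interleave x^{k+1}-x^k\interleave<\infty$; thus $\{x^k\}$ is Cauchy, converges to some $x^*$, and $x^*\in\mathcal{X}^*\subset\mathcal{S}^*$ by Proposition~\ref{prop-subconverge}(ii). I expect the crux to be the non-monotonicity of $\{\Xi(w^k)\}$: unlike the classical Attouch--Bolte framework one cannot telescope $\varphi(\Xi(w^k))$ directly, and the remedy is precisely to run the concavity argument on the monotone $\{\delta_k\}$ while importing the gradient lower bound from the KL inequality at $w^{k-1}$ through the bridge $\delta_k\le\Delta_{k-1}$ and the monotonicity of $\varphi'$, with careful index bookkeeping so the AM--GM step closes the summation.
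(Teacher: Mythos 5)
Your proposal is correct and follows essentially the same route as the paper's own proof: the paper likewise invokes the uniformized KL inequality for $\Xi$ at the points $w^{k-1}$, transfers it to the monotone sequence $\{\Theta(x^k)\}$ via $\Theta(x^k)\le\Xi(w^{k-1})$ and the nonincreasing $\varphi'$, combines it with the bound ${\rm dist}(0,\partial\Xi(w^{k-1}))\le\alpha\sqrt{-f_{k-1,j_{k-1}}(x^{k})}$ from Proposition \ref{prop-Xi}(ii), and closes with the identical AM--GM plus telescoping argument to get summability. The only cosmetic difference is the handling of the degenerate case (the paper assumes $\Theta(x^{\overline{k}})=\varpi^*$ for some $\overline{k}$, you assume $v_k=0$ for some $k$), which is immaterial.
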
 
\begin{proof}
 If there exists some $\overline{k}\ge\overline{k}_1$ such that $\Theta(x^{\overline{k}})=\varpi^*$, where $\overline{k}_1$ is the same as in Proposition \ref{prop-Xi}, then by Proposition \ref{descent-Theta} (iii) we have $\Theta(x^{k})=\varpi^*$ for all $k\ge\overline{k}$. This along with Proposition \ref{descent-Theta} (ii) implies that $x^k=x^{k+1}$ for all $k\ge\overline{k}$, and the conclusion then follows. Hence, it suffices to consider the case that $\Theta(x^{k})>\varpi^*$ for all $k\ge\overline{k}$. By Proposition \ref{prop-subconverge} (ii), the accumulation point set $\mathcal{X}^*$ of $\{x^k\}_{k\in\mathbb{N}}$ is nonempty and compact, and for any $x\in\mathcal{X}^*$, $\Theta(x)=\varpi^*=\lim_{k\to\infty}\Theta(x^k)$. Write $ \Omega^*:=\big\{(x,x,0)\ |\ x\in\mathcal{X}^*\big\}$. Then, for any $w=(x,x,0)\in\Omega^*$,  $\Xi(w)=\Theta(x)=\varpi^*$. Since $\Xi$ is assumed to be a KL function, by invoking \cite[Lemma 6]{Bolte14}, there exist $\epsilon>0,\eta>0$ and $\varphi\in\Upsilon_{\!\eta}$ such that for all $w\in[\varpi^*<\Xi<\varpi^*+\eta]\cap\{w\in\mathbb{X}\times \mathbb{X}\times\mathbb{R}_{+}\ |\ {\rm dist}(w,\Omega^*)<\epsilon\}$, 
 \begin{equation}\label{ineq41-KL}
 \varphi'(\Xi(w)-\varpi^*){\rm dist}(0,\partial \Xi(w))\ge 1.
 \end{equation}
 By invoking Proposition \ref{prop-Xi} (i) and Proposition \ref{descent-Theta} (iii), for all $k\ge\overline{k}$, it holds that 
 \begin{equation}\label{ineq42-KL}
 \varpi^*<\Theta(x^k)\le \Xi(w^{k-1})\le \Theta(x^{k-1})+\underline{\gamma}^{-1}\widehat{\gamma}\min\{2,1+\overline{\mu}/2\}f_{k-1,j_{k-1}}(x^{k})<\varpi^*+\eta, 
 \end{equation}
 where the last inequality is due to $\lim_{k\to\infty}\Theta(x^k)=\varpi^*$ and $\lim_{k\to\infty}f_{k-1,j_{k-1}}(x^k)=0$. From Proposition \ref{descent-Theta} (v), $\lim_{ k\to\infty}\interleave\overline{x}^{k}-x^k\interleave=0$, so ${\rm dist}(\overline{x}^k,\mathcal{X}^*)<{\epsilon}/{3}$ for all $k\ge\overline{k}$ (if necessary by increasing $\overline{k}$). Recall that $t^k=\sqrt{-(\mu_k/2)f_{k,j_k}(x^{k+1})}$ for each $k$. We have $\lim_{k\to\infty}t^k=0$, so $|t^{k-1}|<{\epsilon}/{6}$ for all $k\ge\overline{k}$ (if necessary by increasing $\overline{k}$). Then, 
 \begin{equation*}
  {\rm dist}(w^{k-1},\Omega^*)\le {\rm dist}(\overline{x}^k,\mathcal{X}^*)+{\rm dist}(x^{k-1},\mathcal{X}^*)+2|t^{k-1}|<\epsilon\quad\forall k\ge\overline{k}. 
 \end{equation*}
 Together with the above \eqref{ineq41-KL}-\eqref{ineq42-KL}, $\varphi'(\Xi(w^{k-1})-\varpi^*){\rm dist}(0,\partial \Xi(w^{k-1}))\ge 1$. Since $\varphi$ is concave and continuously differentiable on $(0,\eta)$, we have that $\varphi'$ is nonincreasing on $(0,\eta)$. Then, $\varphi'(\Theta(x^k)-\varpi^*)\ge \varphi'(\Xi(w^{k-1})-\varpi^*)$. Thus, for all $k\ge\overline{k}$,
 \begin{equation}\label{KL-ineq}
  \varphi'(\Theta(x^k)-\varpi^*){\rm dist}(0,\partial \Xi(w^{k-1}))\ge 1,
 \end{equation}
 which by Proposition \ref{prop-Xi} (ii) implies that $\varphi'(\Theta(x^k)-\varpi^*)\ge \frac{1}{\alpha\sqrt{-f_{k-1,j_{k-1}}(x^{k})}}$ for all $k\ge\overline{k}$. Now using the concavity of the function $\varphi$ yields that for all $k\ge\overline{k}$,  
  \begin{align*}
  \Delta_k&:=\varphi(\Theta(x^k)-\varpi^*)-\varphi(\Theta(x^{k+1})-\varpi^*)
  \ge \varphi'(\Theta(x^k)-\varpi^*)(\Theta(x^k)-\Theta(x^{k+1})) \nonumber \\
  &\ge \frac{\Theta(x^k)-\Theta(x^{k+1})}{\alpha\sqrt{-f_{k-1,j_{k-1}}(x^{k})}}\ge\frac{-f_{k,j_{k}}(x^{k+1})}{\alpha\sqrt{-f_{k-1,j_{k-1}}(x^{k})}},
 \end{align*}
 where the last inequality is from \eqref{eq-descend}. This is equivalent to saying that for all $k\ge\overline{k}$,
 \[ 
 -f_{k,j_{k}}(x^{k+1})\le \alpha\Delta_k\sqrt{-f_{k-1,j_{k-1}}(x^{k})}.
 \] 
 By using the basic inequality $2\sqrt{ab}\le a+b$ with $a=\alpha\Delta_k$ and $b=\sqrt{-f_{k-1,j_{k-1}}(x^{k})}$,
 \begin{equation*}
 2\sqrt{-f_{k,j_{k}}(x^{k+1})}\le\alpha\Delta_k +\sqrt{-f_{k-1,j_{k-1}}(x^{k})}
 \quad{\rm for\ all}\ k\ge\overline{k}.
 \end{equation*}
% By Proposition \ref{descent-Theta} (ii), $\lim_{k\to\infty}f_{k-1,j_{k-1}}(x^k)=0$, which means that $|f_{k-1,j_{k-1}}(x^k)|\le 1/2$ for all $k\ge \overline{k}$ (if necessary by increasing $\overline{k}$). 
 Summing the above inequality from $k\ge\overline{k}$ to any $l>k$ immediately results in 
 \begin{align*}
  \sum_{i=k}^{l}\sqrt{-f_{i,j_{i}}(x^{i+1})} 
  &\le\sum_{i=k}^{l}\alpha\Delta_k+\sqrt{-f_{k-1,j_{k-1}}(x^{k})}\\
  &\le \alpha \varphi(\Theta(x^k)-\varpi^*) +\sqrt{-f_{k-1,j_{k-1}}(x^{k})}, 
 \end{align*}
 where the second inequality also uses the nonnegativity of $\varphi$.
 Passing the limit $l\to\infty$ to the both sides of the above inequality  and using Proposition \ref{descent-Theta} (i) leads to $\sum_{i=k}^{\infty}\interleave x^{i+1}-x^i\interleave<\infty$.
 Therefore, $\{x^k\}_{k\in\mathbb{N}}$ is a convergent sequence and converges to a point $x^*\in\mathcal{X}^*\subset\mathcal{S}^*$. The proof is completed.	
 \end{proof} 

 The KL assumption on $\Xi$ in Theorem \ref{convergence} is mild. As will be shown in Section \ref{sec5}, for the image reconstruction models there, the associated $\Xi$ has the KL property. When $\Xi$ has the KL property of exponent $\theta\in(0,1)$, inequality \eqref{KL-ineq} holds for all $k\ge\overline{k}$ with $\varphi(t)=ct^{1-\theta}$ for some $c>0$, which by Proposition \ref{prop-Xi} (ii) and \eqref{eq-descend} implies that $\Delta^k\le[\alpha(1\!-\!\theta)]^{\frac{1}{\theta}} (\Delta^{k-1}-\Delta^k)^{\frac{1}{2\theta}}$ for all $k\ge\overline{k}$.  
 By this recursive formula, using the same arguments as those in \cite[Theorem 2]{Attouch09} leads to the following result. 
 %----------------------------------------------------------------------------------------
 \begin{corollary}\label{convergence-rate}
  Suppose that $\Xi$ is a KL function of exponent $\theta\in(0,1)$, and that Assumption \ref{ass1} holds. Then, $\{x^k\}_{k\in\mathbb{N}}$ converges to a point $x^*\in\mathcal{S}^*$, and moreover,
  \begin{itemize}
  \item [(i)] for $\theta\in(0,\frac{1}{2}]$, there exist $\gamma>0$ and $\varrho\in(0,1)$ such that $\Theta(x^k)-\Theta(x^*)\le \gamma\varrho^k$ for all $k$ large enough;
   
  \item [(ii)] for $\theta\in(\frac{1}{2},1)$, there exists $\gamma>0$ such that $\Theta(x^k)-\Theta(x^*)\le\gamma k^{-\frac{1}{2\theta-1}}$ for all $k$ large enough. 
 \end{itemize} 	
 \end{corollary}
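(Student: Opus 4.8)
The convergence of $\{x^k\}_{k\in\mathbb{N}}$ to some $x^*\in\mathcal{S}^*$ is already furnished by Theorem \ref{convergence}, so the only remaining task is to quantify the decay of the objective error, and the plan is to reduce everything to a single scalar recursion and then mimic the argument of \cite[Theorem 2]{Attouch09}. Write $\Delta^k:=\Theta(x^k)-\varpi^*=\Theta(x^k)-\Theta(x^*)$, which by Proposition \ref{descent-Theta} (ii)--(iii) is nonnegative, nonincreasing and converges to $0$. The degenerate situation $\Theta(x^{\overline{k}})=\varpi^*$ for some $\overline{k}$ yields finite termination and was already disposed of at the start of the proof of Theorem \ref{convergence}, so I may assume $\Delta^k>0$ for all large $k$. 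Combining the KL inequality \eqref{KL-ineq}, the subgradient bound of Proposition \ref{prop-Xi} (ii) and the descent estimate \eqref{eq-descend}, exactly as in the paragraph preceding the statement, produces for some constant $C>0$ and all $k\ge\overline{k}$ the recursion $(\Delta^k)^{2\theta}\le C(\Delta^{k-1}-\Delta^k)$. Everything then follows from analyzing this one inequality.

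For part (i), where $\theta\in(0,\tfrac{1}{2}]$, I would exploit $2\theta\le 1$: since $\Delta^k\to 0$, for $k$ large one has $\Delta^k\le 1$ and hence $(\Delta^k)^{2\theta}\ge\Delta^k$, so the recursion reads $\Delta^k\le C(\Delta^{k-1}-\Delta^k)$. Rearranging gives $\Delta^k\le\frac{C}{1+C}\Delta^{k-1}$, i.e. $R$-linear decay with ratio $\varrho:=\frac{C}{1+C}\in(0,1)$, whence $\Theta(x^k)-\Theta(x^*)=\Delta^k\le\gamma\varrho^k$ for a suitable $\gamma>0$ and all large $k$.

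For part (ii), where $\theta\in(\tfrac{1}{2},1)$, set $s:=2\theta>1$ and study the positive decreasing function $g(t):=t^{-(s-1)}$ on $(0,\infty)$. The mean value theorem gives $g(\Delta^k)-g(\Delta^{k-1})=(s-1)\xi^{-s}(\Delta^{k-1}-\Delta^k)$ for some $\xi\in[\Delta^k,\Delta^{k-1}]$, and I would bound this difference below by a fixed positive constant $\mu$ through the standard two-case split: if $\Delta^{k-1}\le q\Delta^k$ for a chosen $q>1$, then $\xi^{-s}\ge(q\Delta^k)^{-s}$ and $(\Delta^k)^s\le C(\Delta^{k-1}-\Delta^k)$ make the product of order one; if instead $\Delta^{k-1}>q\Delta^k$, then $g(\Delta^k)\ge q^{\,s-1}g(\Delta^{k-1})$ and the gap is bounded below using the boundedness of $\{\Delta^{k-1}\}$. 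Telescoping $g(\Delta^k)-g(\Delta^{k-1})\ge\mu$ from $\overline{k}$ to $k$ gives $(\Delta^k)^{-(s-1)}\ge\mu(k-\overline{k})$, i.e. $\Theta(x^k)-\Theta(x^*)=\Delta^k\le[\mu(k-\overline{k})]^{-1/(2\theta-1)}$, the claimed sublinear rate.

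The algebraic manipulations are routine; the one genuinely delicate point is the sublinear regime $\theta\in(\tfrac12,1)$, specifically the two-case split that controls the intermediate value $\xi$ by $\Delta^k$ rather than by $\Delta^{k-1}$, since the recursion only bounds $\Delta^{k-1}-\Delta^k$ from below by a power of $\Delta^k$. This is precisely the mechanism of \cite[Theorem 2]{Attouch09}, and because our recursion has the identical form, I would either reproduce that short argument or simply cite it, after verifying that its hypotheses---a nonnegative nonincreasing sequence obeying $(\Delta^k)^{2\theta}\le C(\Delta^{k-1}-\Delta^k)$---are met here for all $k\ge\overline{k}$.
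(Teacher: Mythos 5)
Your proposal is correct and follows essentially the same route as the paper: the paper likewise combines the KL inequality \eqref{KL-ineq} with $\varphi(t)=ct^{1-\theta}$, Proposition \ref{prop-Xi} (ii) and \eqref{eq-descend} to obtain the recursion $(\Delta^k)^{2\theta}\le C(\Delta^{k-1}-\Delta^k)$ for the objective errors $\Delta^k=\Theta(x^k)-\varpi^*$, and then invokes the argument of \cite[Theorem 2]{Attouch09}. The only difference is expository: you spell out that standard argument (the rearrangement giving linear decay for $\theta\in(0,\frac{1}{2}]$ and the two-case split with telescoping of $t\mapsto t^{-(2\theta-1)}$ for $\theta\in(\frac{1}{2},1)$), whereas the paper simply cites it.
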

\section{Applications to image reconstruction}\label{sec5}

 In this section, we shall apply Algorithm \ref{iPMM} to solve several classes of specific nonconvex composite optimization problems arising from image deblurring and inpainting. It is well known that isotropic and anisotropic TV functions play a crucial role in image deblurring and inpainting. Let $\mathcal{D}_h\!:\mathbb{R}^{m\times n}\to\mathbb{R}^{m\times n}$ and $\mathcal{D}_v\!:\mathbb{R}^{m\times n}\to\mathbb{R}^{m\times n}$ be the horizontal and vertical 2-D finite difference operators, respectively, and write $\mathcal{D}(X):=[\mathcal{D}_h(X);\mathcal{D}_v(X)]$ for $X\in\mathbb{R}^{m\times n}$. The isotropic and anisotropic TV functions have the form $\psi(\mathcal{D}(\cdot))$ with 
 \begin{subnumcases}{}\label{TV-fun}
  \psi(z)\!:=\sum_{i=1}^{m}\sum_{j=1}^{n}\sqrt{x_{i,j}^2+y_{i,j}^2}\quad{\rm for}\  z=(x,y)\in\mathbb{R}^{m\times n}\times\mathbb{R}^{m\times n},\\
  \psi(z)\!:=\sum_{i=1}^{m}\sum_{j=1}^{n}\sqrt{x_{i,j}^2+y_{i,j}^2}\quad{\rm for}\  z=(x,y)\in\mathbb{R}^{m\times n}\times\mathbb{R}^{m\times n}.
  \label{aTV-fun}
 \end{subnumcases}
 Unless otherwise stated, $\psi$ in the rest of this section means the one in \eqref{TV-fun} or \eqref{aTV-fun}. 
%--------------------------------------------------------------------------------------- 
 \subsection{Gray image deblurring}\label{sec5.1}

 Consider the following TV regularized composite optimization model for image deblurring
 \begin{equation}\label{Xmodel}
 \min_{x\in\mathbb{R}^{m\times n}}\vartheta(|\mathcal{A}(x)-b|)+\nu\psi(\mathcal{D}(x)) +\mathbb{I}_{\Lambda}(x),
\end{equation}
 where $\mathcal{A}\!:\mathbb{R}^{m\times n}\to\mathbb{R}^{m\times n}$ is a blur operator, $b\in\mathbb{R}^{m\times n}$ denotes a noisy gray image, and $\Lambda\!:=\!\{x\in\mathbb{R}^{m\times n}\,|\, x_{ij}\in[l_{ij},u_{ij}]\ {\rm for\ all}\ (i,j)\in[m]\times[n]\}$ with $l,u\in\mathbb{R}_{+}^{m\times n}$ is a matrix box set. It is clear that model \eqref{Xmodel} is a special case of \eqref{model} with $\mathbb{X}=\mathbb{R}^{m\times n}$, $F(\cdot)=\mathcal{A}(\cdot)-b$, $T(\cdot)=\mathcal{D}(\cdot)$ and $h(\cdot)=\mathbb{I}_{\Lambda}(\cdot)$. When $\vartheta$ is the matrix function induced by $\theta_1$ in Table \ref{tab0}, model \eqref{Xmodel} reduces to the convex model considered in \cite{Chan13}; when $\vartheta$ is the matrix function induced by $\theta_{4}$ and $\Lambda$ is the whole space $\mathbb{R}^{m\times n}$, this model reduces to the one considered in \cite{Zhang17}. From \cite[Section 4]{Attouch10} and the discussion after Definition \ref{Def-KL}, the potential function $\Xi$ associated to model \eqref{Xmodel} with $\vartheta$ induced by one of  $\theta_1$-$\theta_5$ in Table \ref{tab0} is a KL function. Our algorithm is applicable to model \eqref{Xmodel} with $\vartheta$ induced by a nonconvex $\theta\in\mathscr{L}$. 

 Next we take a look at the subproblem \eqref{subprobk} when Algorithm \ref{iPMM} is applied to solve problem \eqref{Xmodel}, and provide its dual problem. To this end, write $\mathcal{C}\!:=\mathcal{A}\times(\nu\mathcal{D})$ and $\overline{b}=(b;0)\in\mathbb{R}^{3m\times n}$. For each $k\in\mathbb{N}$ and $j\in\{0\}\cup[j_k]$, we take $\mathcal{Q}_{k,j}\!:=\gamma_{k,j}\mathcal{I}\!+\!\alpha_k\mathcal{C}^*\mathcal{C}$, where $\alpha_k>0$ will be specified in numerical experiments, and let
$\omega^k\!:=[\vartheta'(|F(x^k)|);E]$ where $E$ is a $2m\times n$ matrix of all ones. Then problem \eqref{subprobk} is equivalently written as 
\begin{equation*}
 \min_{x\in\mathbb{R}^{m\times n}}f\big(\mathcal{C}(x)-\overline{b}\big)+ \mathbb{I}_{\Lambda}(x)+\frac{\gamma_{k,j}}{2}\|x-x^k\|_F^2+\frac{\alpha_k}{2}\big\|\mathcal{C}(x-x^k)\big\|_F^2+C^k,
\end{equation*}
 where $f(y,w)\!:=\|\omega^k\circ y\|_1+\psi(w)$ for $(y,w)\in\mathbb{R}^{m\times n}\times(\mathbb{R}^{m\times n}\times\mathbb{R}^{m\times n})$. By introducing a new variable $z\!\in\mathbb{R}^{3m\times n}$, this minimization problem can be reformulated as 
 \begin{align} \label{Esubprobk1}
 &\min_{x,z}\,f(z)+\mathbb{I}_{\Lambda}(x) +\frac{\gamma_{k,j}}{2}\|x-x^k\|_F^2+\frac{\alpha_k}{2}\|z-c^k\|_F^2 +C^k\nonumber \\
 &\ {\rm s.t.}\ \ \mathcal{C}(x)-\overline{b}=z\quad\ \ {\rm with}\ c^k\!:=[F(x^k);\nu T(x^k)].
 \end{align}
An elementary calculation yields that the dual of \eqref{Esubprobk1} has the following form
\begin{align}\label{dsubprobk1}
 \min_{\xi\in\mathbb{R}^{3m\times n}} \Phi_k(\xi)&=\frac{1}{2\alpha_k}\|\xi\|_F^2-e_{\alpha^{-1}_k}f(c^k\!+\!\alpha_k^{-1}\xi)+\frac{1}{2\gamma_{k,j}}\|\mathcal{C}^*(\xi)\|_F^2\nonumber\\
 &\qquad-\frac{\gamma_{k,j}}{2}\|\Pi_{\Lambda}(X^k\!-\!\gamma_{k,j}^{-1}\mathcal{C}^*(\xi))-x^k\|_F^2 -C^k
\end{align}
 where, for a proper lsc function $g\!:\mathbb{X}\to\overline{\mathbb{R}}$ and a parameter $\gamma>0$, $e_{\gamma}g$ and $\mathcal{P}_{\!\gamma}g$ respectively denote the Moreau envelope and proximal mapping of $g$ associated to $\gamma$:
 \[
  e_{\gamma}g(x)\!:=\min_{z\in\mathbb{X}}\Big\{\frac{1}{2\gamma}\|z-x\|_F^2+g(z)\Big\}\ \ {\rm and}\ \ 
  \mathcal{P}_{\!\gamma}g(x)\!:=\mathop{\arg\min}_{z\in\mathbb{X}}\Big\{\frac{1}{2\gamma}\|z-x\|_F^2+g(z)\Big\}.
 \]
 Due to the strong convexity of problem \eqref{Esubprobk1}, there is no dual gap between \eqref{Esubprobk1} and its dual \eqref{dsubprobk1}. Therefore, one can achieve the unique optimal solution $(x^*,z^*)$ of \eqref{Esubprobk1} by solving the dual problem \eqref{dsubprobk1}, which is an unconstrained convex minimization with the $C^{1,1}$ objective function $\Phi_k$ (i.e., $\Phi_k$ is continuously differentiable and $\nabla\Phi_k$ is globally Lipschitz continuous). After an elementary calculation, for any $\xi\in\mathbb{R}^{3m\times n}$,     
 \[
  \nabla\Phi_k(\xi)=\mathcal{P}_{\!\alpha_{k}^{-1}}f\big(c^k\!+\!\alpha_{k}^{-1}\xi\big)+\overline{b}-\mathcal{C}\Pi_{\Lambda}(x^k\!-\!\gamma_{k,j}^{-1}\mathcal{C}^*(\xi)).
 \]
 Clearly, if $\xi^*$ is an optimal solution of \eqref{dsubprobk1}, then $(x^*,z^*)$ with $x^*\!=\Pi_{\Lambda}(x^k-\gamma_{k,j}^{-1}\mathcal{C}^*(\xi^*))$ and $ z^*=\mathcal{P}_{\!\alpha_{k}^{-1}}f\big(c^k\!+\!\alpha_k^{-1}\xi^*\big)$ is the unique optimal solution of problem \eqref{Esubprobk1}. 
%--------------------------------------------------------------------------------------- 
 \subsection{Color image inpainting}\label{sec5.2} 
 
 Image inpainting can be modeled as a matrix completion problem by assuming that image data is of low rank or approximate low rank. Many low rank plus TV regularization models have been proposed for image inpainting (see, e.g., \cite{SunLi22,Qiu21,Qiu23}), in which the nuclear norm or a nonconvex surrogate of the rank function are used as the low rank regularization term. As is well known, such regularization terms require at least one SVD of a full matrix in each step of algorithms for solving the corresponding models. To avoid the computation cost of SVDs, a popular approach is to enforce the low-rank property explicitly by using the factorization of matrix variables (see, e.g., \cite{Sun16,Tao22, Li20}). Motvited by these works, we consider a novel matrix completion model for image inpainting. 
 
 Fix an integer $r\in[1,\min\{m,n\}]$ and write $\mathbb{X}_{r}\!:=\mathbb{R}^{m\times r}\times \mathbb{R}^{n\times r}$. Let $\Omega\subset[m]\times[n]$ be an index set, and denote by $\mathcal{P}_{\Omega}\!:\mathbb{R}^{m\times n}\to \mathbb{R}^{m\times n}$ the projection operator on $\Omega$, i.e., $[\mathcal{P}_{\Omega}(x)]_{i,j}=x_{i,j}$ if $(i,j)\in\Omega$, otherwise $[\mathcal{P}_{\Omega}(x)]_{i,j}=0$. We are interested in model
 \begin{equation}\label{UVmodel}
 \min_{(U,V)\in\mathbb{X}_{r}}\vartheta(|\mathcal{P}_{\Omega}(UV^{\top})-b|) +\nu\psi(\mathcal{D}(UV^{\top}))+\lambda(\|U\|_{2,1}+\|V\|_{2,1}),
 \end{equation}
 where $b$ is a noisy observation matrix, and $\lambda(\|U\|_{2,1}+\|V\|_{2,1})$ is used as a low rank regularization term. Obviously, \eqref{UVmodel} is a special case of model \eqref{model} with $\mathbb{X}=\mathbb{X}_r$ and $ F(x):=\mathcal{P}_{\Omega}(UV^{\top})-b, T(x):=\mathcal{D}(UV^{\top})$ and $h(x):=\|U\|_{2,1}+\|V\|_{2,1}$ for $x=(U,V)\in\mathbb{X}_r$. From \cite[Section 4]{Attouch10} and the discussion after Definition \ref{Def-KL}, the potential function $\Xi$ associated to \eqref{UVmodel} with $\vartheta$ induced by one of  $\theta_1$-$\theta_5$ is also a KL function. 
 
 To present the subproblem \eqref{subprobk} when Algorithm \ref{iPMM} is applied to solve problem \eqref{UVmodel}, for each $k\in\mathbb{N}$, we write $x^k\!:=\!(U^k,V^k)\in\!\mathbb{X}_r$ and $\mathcal{C}_k:=\mathcal{A}_k\times\mathcal{B}_k$ with  $\mathcal{A}_k\!:=F'(x^k)$ and $\mathcal{B}_k\!:=\nu T'(x^k)$. By the expressions of $F$ and $T$, for any $(G,H)\in\mathbb{X}_r$, 
 \begin{equation}\label{ABk}
 \mathcal{A}_k(G,H):=\mathcal{A}\big(G(V^k)^{\top}\!+\!U^kH^{\top}\big)\ \ {\rm and}\ \ 
 \mathcal{B}_k(G,H):=\nu\mathcal{D}\big(G(V^k)^{\top}\!+\!U^kH^{\top}\big).
 \end{equation}
 For each $k\in\mathbb{N}$ and $j\in\{0\}\cup[j_k]$, take $\mathcal{Q}_{k,j}\!:=\gamma_{k,j}\mathcal{I}\!+\!\alpha_k\mathcal{C}_k^*\mathcal{C}_k$, where $\alpha_k>0$ is specified in numerical experiments. Then, the subproblem \eqref{subprobk} is equivalently written as  
 \begin{equation*}
  \min_{x\in\mathbb{X}_r}\big\|\omega^k\circ(\mathcal{C}_k(x)-b^k)\big\|_1+\lambda h(x)+\frac{\gamma_{k,j}}{2}\|x-x^k\|_F^2+\frac{\alpha_k}{2}\big\|\mathcal{C}_k(x-x^k)\big\|_F^2+C^k
 \end{equation*}
 with $\omega^k\!:=\![\nabla\vartheta(|F(x^k)|);E]$ and $b^k\!:=\![\mathcal{A}_k(x^k)\!-\!F(x^k);\mathcal{B}_k(x^k)-\!\nu T(x^k)]$. 
 By introducing a variable $z\in\mathbb{R}^{3m\times n}$, this minimization problem is reformulated as 
 \begin{align} \label{Esubprobk}
  &\min_{x,z}\,f(z)+\lambda h(x) +\frac{\gamma_{k,j}}{2}\|x-x^k\|_F^2+\frac{\alpha_k}{2}\|z-c^k\|_F^2 +C^k,\nonumber \\
  &\ \ {\rm s.t.}\ \ \mathcal{C}_k(x)-b^k =z\quad{\rm with}\  c^k\!:=[F(x^k);\nu T(x^k)] 
 \end{align}
 with $f(z)\!:=\|\omega^k\circ z\|_1$ for $z\!\in\mathbb{R}^{3m\times n}$. A simple calculation yields the dual of \eqref{Esubprobk} as 
 \begin{align}\label{dsubprobk}
  \min_{\xi\in\mathbb{R}^{3m\times n}} \Phi_k(\xi)&=\frac{1}{2\alpha_k}\|\xi\|_F^2-e_{\alpha^{-1}_k}f(c^k+\alpha_k^{-1}\xi)+\frac{1}{2\gamma_{k,j}}\|\mathcal{C}_k^*(\xi)\|_F^2\nonumber\\
  &\qquad -\lambda e_{\lambda\gamma_{k,j}^{-1}}h\big(x^k\!-\!\gamma_{k,j}^{-1}\mathcal{C}^*_k(\xi)\big)-C^k. 
 \end{align}
 Similar to Section \ref{sec5.1}, one can achieve the unique optimal solution of $(x^*,z^*)$ of \eqref{Esubprobk} by solving its dual \eqref{dsubprobk}, an unconstrained $C^{1,1}$ convex minimization with
 \[ 
 \nabla\Phi_k(\xi)=\mathcal{P}_{\!\alpha_{k}^{-1}}f\big(c^k\!+\!\alpha_{k}^{-1}\xi\big)+Y^k-\mathcal{C}_k\big[\mathcal{P}_{\lambda\gamma_{k,j}^{-1}}h\big(x^k\!-\!\gamma_{k,j}^{-1}\mathcal{C}^*_k(\xi)\big)\big]\quad\forall \xi\in\mathbb{R}^{3m\times n}.
 \]
 If $\xi^*$ is an optimal solution of \eqref{dsubprobk}, then $(x^*,z^*)$ with $x^*\!=\mathcal{P}_{\lambda\gamma_{k,j}^{-1}}h\big(x^k\!-\!\gamma_{k,j}^{-1}\mathcal{C}^*_k(\xi^*)\big)$ and $z^*=\mathcal{P}_{\!\alpha_{k}^{-1}}f\big(c^k\!+\!\alpha_{k}^{-1}\xi\big)$ is the unique optimal solution of \eqref{Esubprobk}.
 
 %--------------------------------------------------------------------------------------
 \section{Numerical experiments}\label{sec6}
 
 We validate the efficiency of Algorithm \ref{iPMM} by applying it to solve models \eqref{Xmodel} and \eqref{UVmodel}. From Sections \ref{sec5.1} and \ref{sec5.2}, an inexact minimizer of subproblem \eqref{subprobk} can be achieved by solving an unconstrained $C^{1,1}$ convex minimization problem \eqref{dsubprobk1} or \eqref{dsubprobk}. Note that the Lipschitz constant of their gradient function $\nabla\Phi_k$ depends on $\alpha_k^{-1}$ and $\gamma_{k,j}^{-1}$, which will become larger as the parameters $\alpha_k$ and $\gamma_{k,j}$ decrease. This means that the minimization problem \eqref{dsubprobk1} or \eqref{dsubprobk}  becomes much harder as the parameters $\alpha_k$ and $\gamma_{k,j}$ decrease. Inspired by this, we apply the limited-memory BFGS \cite{Liu89,Nocedal06} to solve a regularized version of problems \eqref{dsubprobk1} and \eqref{dsubprobk}, i.e., the following strongly convex minimization
 \begin{equation}\label{iterate-PPA}
 	\min_{\xi\in\mathbb{R}^{3m\times n}}\Psi_{k}(\xi):=\Phi_k(\xi)+(\tau_k/2)\|\xi-\xi^{k-1}\|^2,
 \end{equation}
 where $\xi^{k-1}$ is the final iterate for minimizing $\Psi_{k-1}$ with $\xi^0=0$, and $\tau_k>0$ is a regularization parameter. We compare the performance of Algorithm \ref{iPMM} armed with the limited-memory BFGS for solving \eqref{iterate-PPA}, abbreviated as iPMM-lbfgs, with that of two state-of-art solvers. The quality of solutions or recovered images is measured in terms of the peak signal to noise ratio (PSNR) and the structural similarity (SSIM) \cite{Wang04}. All tests are performed in Matlab 2020 on a desktop running on 64-bit Windows System with an Intel(R) Core(TM) i7-6800K CPU 3.40GHZ and 16 GB RAM. 
 %------------------------------------------------------------------------------------------- 
 \subsection{Tests for image deblurring with impluse noise}\label{sec6.1} 
 
 We test the performance of iPMM-lbfgs to solve model \eqref{Xmodel} with $\vartheta$ being the function associated to $\theta_4$ in Table \ref{tab0} and $\psi$ being the TV function in \eqref{TV-fun}. For numerical simulation, the linear operator $\mathcal{A}$ is generated by the average blur with $7\times7$ testing kernel and Gaussian blur with $9\times9$ testing kernel (standard deviation $2$) from Matlab 2020, and the observation matrix $b$ is generated by imposing salt-and-pepper impulse noise on some gray images, including Boat ($512\times512$), Cameraman ($256\times256$), House ($256\times256$), Man ($512\times512$), Building ($360\times360$), Parrot ($256\times256$), which are shown in Figure \ref{grayiamge} below. For the subsequent tests, we consider four kinds of noise levels $30\%,50\%,70\%,90\%$. 
 %------------------------------------------------------------------------------------------ 
 \begin{figure}[h] 
 	\centering  
 	\subfigure[Boat]{
 		\includegraphics[width=2.8cm]{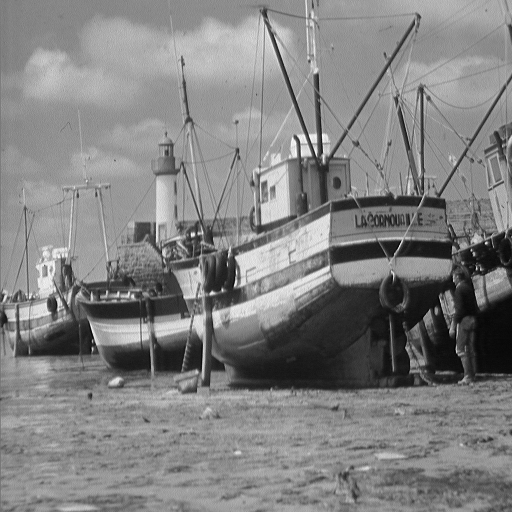}} \quad
 	\subfigure[Cameraman]{
 		\includegraphics[width=2.8cm]{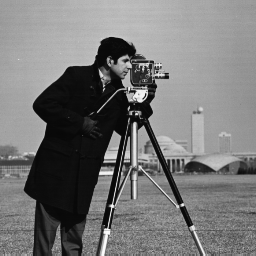}}\quad
 	\subfigure[House]{
 		\includegraphics[width=2.8cm]{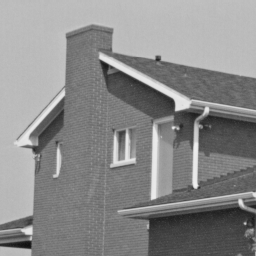}}\quad
 	\subfigure[Man]{
 		\includegraphics[width=2.8cm]{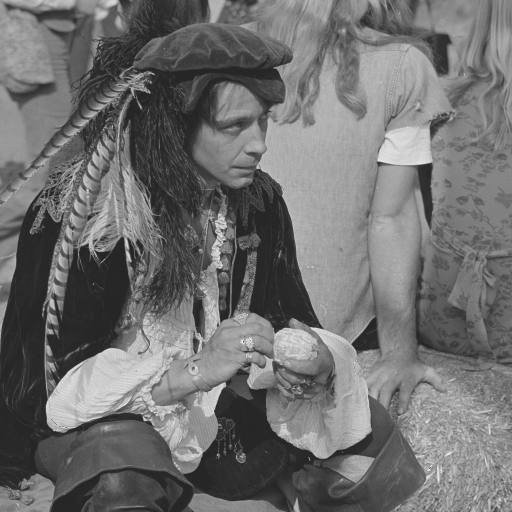}}\quad
 	\subfigure[Building]{
 		\includegraphics[width=2.8cm]{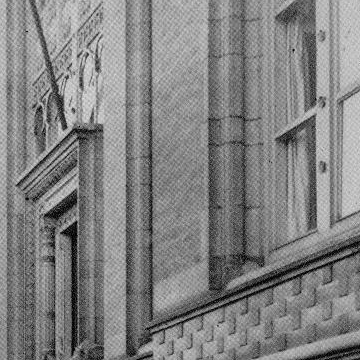}}\quad
 	\subfigure[Parrot]{
 		\includegraphics[width=2.8cm]{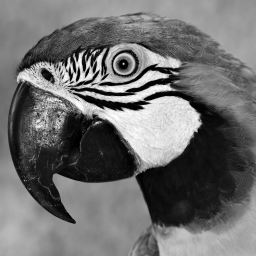}}\quad
 	\caption{The original gray images used for deblurring tests} \label{grayiamge}
 \end{figure} 
  We compare the performance of iPMM-lbfgs with that of the proximal linearized minimization algorithm (PLMA) proposed in \cite{Zhang17} for solving 
 \begin{equation}\label{Xmodel1}
 	\min_{x\in\mathbb{R}^{m\times n}}\gamma\vartheta(|\mathcal{A}(x)-b|)+\psi(\mathcal{D}(x)),
 \end{equation}
 which is actually model \eqref{Xmodel} with $\Lambda=\mathbb{R}^{m\times n}$ and $\gamma=\nu^{-1}$. Each step of PLMA is seeking $(x^{k+1},u^{k+1})$ such that $u^{k+1}\in\partial Q(x^{k+1}|x^k)$ with $\|u^{k+1}\|_F\le\rho\epsilon\|x^{k+1}\!-x^k\|_F$ for some $\rho>0$ and $\epsilon\in(0,\frac{1}{2})$, where $Q(\cdot|x^k)$ is the majorization of the objective function of \eqref{Xmodel1} at $x^k$:
 \[
 Q(x|x^k):=\gamma\langle\nabla\vartheta(|\mathcal{A}(x^k)-b|),|\mathcal{A}(x)-b|\rangle+\psi(\mathcal{D}(x))+\frac{\rho}{2}\|x-x^k\|_F^2\quad\forall x\in\mathbb{R}^{m\times n}. 
 \]
 In the implementation of PLMA, the alternating direction multiplier method (ADMM) with a fixed penalty parameter is used to seek such $(x^{k+1},u^{k+1})$, and the details are seen in the code \url{https://github.com/xjzhang008/Nonconvex-TV/archive/master.zip}. During the subsequent tests, the parameters of PLMA are set as the default ones. 
 
 According to the discussion in \cite[Section 2.1]{Zhang17} for the influence of $\varepsilon$ in $\theta_4$ on model \eqref{Xmodel1}, we choose $\theta_4$ with $\varepsilon=90$ for \eqref{Xmodel} and \eqref{Xmodel1} in the subsequent tests. 
 %------------------------------------------------------------------------------------------- 
 \subsubsection{Influence of regularization parameter $\nu$ on PSNR}\label{sec6.1.1}
  We take Cameraman for example to check the influence of $\nu$ on PSNR. Figure \ref{fig1} below plots the PSNR curves under two noise levels with $\mathcal{A}$ generated randomly as above. We see that the PSNR has more variation as $\nu$ varies under a high noise level. In view of this, our numerical results in Section \ref{sec6.1.3} will report the interval of $\nu$ corresponding to better PSNR for the tested images. Under the $90\%$ noise level, the PSNR for the Gaussian blur $\mathcal{A}$ is a little higher than the one for the average blur $\mathcal{A}$, but under the $30\%$ noise level, the PSNR for the former is much lower than the one for the latter.
  
 %------------------------------------------------------------------------------------------- 
 \begin{figure}[H]
 	\centering
 	\subfigure[\label{fig1a} noise level $90\%$]{\includegraphics[width=0.45\textwidth]{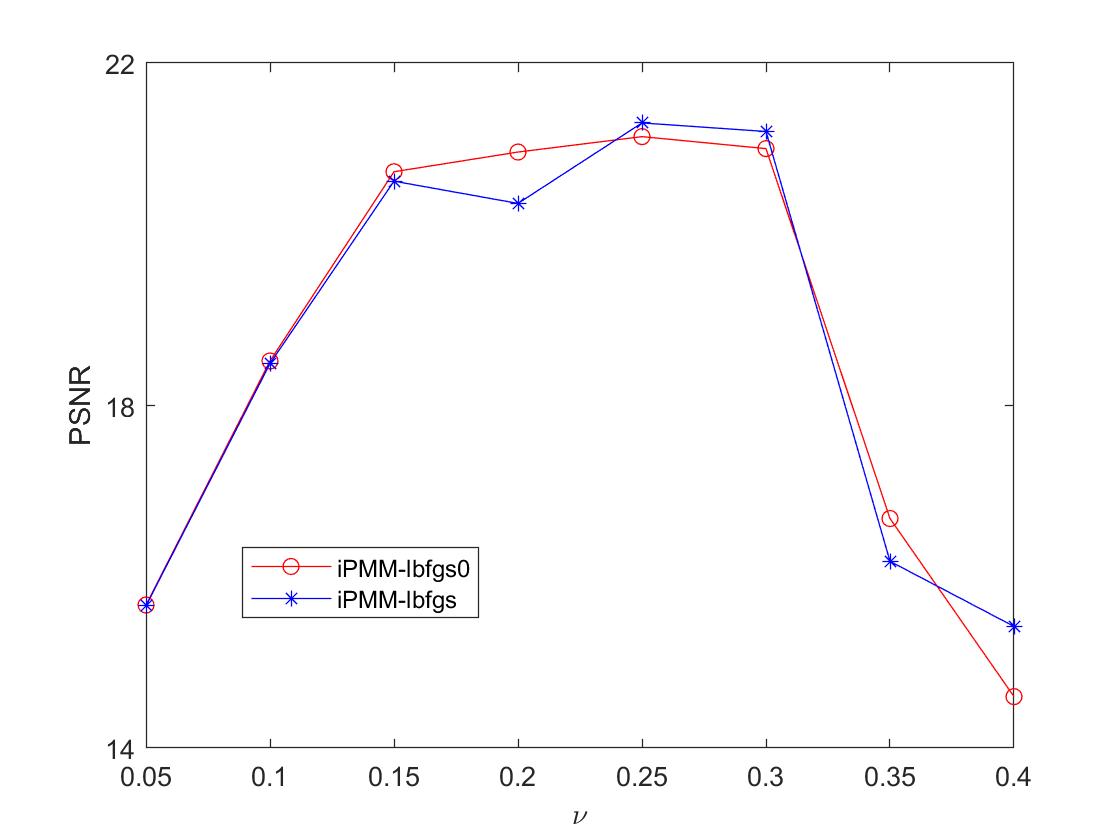}}
 	\subfigure[\label{fig1b} noise level $30\%$]{\includegraphics[width=0.45\textwidth]{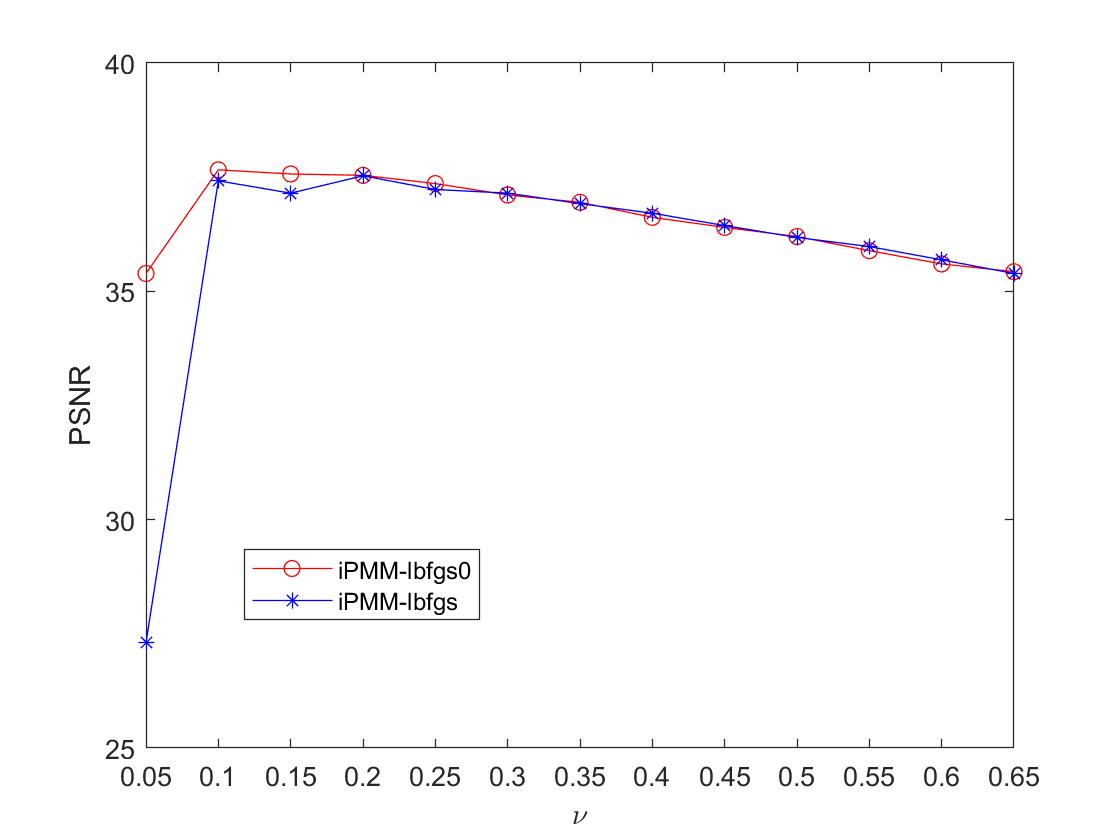}}
 	\setlength{\abovecaptionskip}{10pt}
 	\caption{Influence of parameter $\nu$ on PSNR under two noise levels}
 	\label{fig1}
 \end{figure}
 %------------------------------------------------------------------------------- 
 \subsubsection{Implementation details of iPMM-lbfgs}\label{sec6.1.2}
 
 We first take a look at the choice of parameters involved in Algorithm \ref{iPMM}. 
 As mentioned in Section \ref{sec5.1},  $\mathcal{Q}_{k,j}=\gamma_{k,j}\mathcal{I}+\alpha_k\mathcal{C}^*\mathcal{C}$. We update the parameter $\alpha_k$ by the following formula
 \begin{equation}\label{alphak-update}
 	\alpha_{k+1}=\left\{\begin{array}{cl}
 		\!\max\{\frac{\alpha_k}{1.05},10^{-3}\}&{\rm if}\ {\rm mod}(k,3)=0,\\
 		\alpha_k &{\rm otherwise}
 	\end{array}\right.\ {\rm with}\ \alpha_0=\min\big\{\nu^{-1}\rho_{\nu},50\big\}.
 \end{equation}
 Among others, $\rho_{\nu}$ is chosen to be $5,2.5,2/3$ for the noise levels $30\%,50\%,70\%$, respectively, and for the noise level $90\%$, $\rho_{\nu}=0.1$ and $1/2$ are respectively used for $\mathcal{A}$ generated by the average blur with $7\times 7$ testing kernel and the Gaussian blur with $9\times 9$ testing kernel. The other parameters of Algorithm \ref{iPMM} are chosen as follows
 \begin{equation}\label{other-para}
 \varrho=2,\,\underline{\gamma}=\alpha_0,\,\overline{\gamma}=10^6,\,\mu_k=\overline{\mu}/{k^{2.1}}\ {\rm with}\ \overline{\mu}=10^{10}.
 \end{equation}
 Such a sequence $\{\mu_k\}_{k\in\mathbb{N}}$ satisfies Assumption \ref{ass1} (iii). For the parameter $\tau_k$ in \eqref{iterate-PPA}, we update it by  $\tau_{k+1}=\max\{\tau_k/\rho_{\tau},\epsilon^*\}$ with $\tau_0=\min\{\underline{\gamma},10\}$, where $\rho_{\tau}=1.2$ if $\Theta(x^0)>10^5$, otherwise $\rho_{\tau}=1.15$, and $\epsilon^*>0$ is the tolerance of the stopping condition. The starting point $x^0$ is always chosen to be $b$.  
 
 We terminate Algorithm \ref{iPMM} whenever one of the following conditions is satisfied:
 \begin{equation}\label{stopping-cond}
 \frac{\|x^{k+1}-x^k\|_F}{1+\|b\|_F}\le\epsilon^*,\ \tau_k\le\epsilon^*\ \ {\rm and}\ \
 	\frac{|\Theta(x^k)-\max_{1\le j\le 9}\Theta(x^{k-j})|}{\max\{1,\Theta(x^k)\}}\le 10^{-5},
 \end{equation} 
 where $\epsilon^*=\min\{10^{-6},10^{-6}/\underline{\gamma}\}$ if $\Theta(x^0)>10^5$, otherwise $\epsilon^*=\min\{10^{-8},10^{-6}/\underline{\gamma}\}$. 
 For the stopping condition of LBFGS, let $\xi^{l}$ be the current iterate of LBFGS and write $x^{k,j}=\Pi_{\Lambda}(x^k-\!\gamma_{k,j}^{-1}\mathcal{C}^*(\xi^l))$. By the weak duality theorem, $\Phi_k(\xi^{l})\le \Theta_{k,j}(\overline{x}^k)$, so we terminate LBFGS at the iterate $\xi^{l}$ when
 $\Theta_{k,j}(x^{k,j})<\Theta(x^k)$ and $ \Theta_{k,j}(x^{k,j})-\Phi_k(\xi^{l})\le\frac{\mu_k}{2}[\Theta(x^k)-\Theta_{k,j}(x^{k,j})]$,
 and now $x^{k,j}$ is an inexact minimizer of \eqref{subprobk} satisfying the inexactness condition \eqref{inexct-cond}. For the subsequent tests, we choose the number of memory of LBFGS to be ${\bf 10}$ and the maximum number of iterates to be ${\bf 50}$.   
   
 %--------------------------------------------------------------------------------- 
 \subsubsection{Numerical results for image deblurring}\label{sec6.1.3}
 %------------------------------------------------------------------------------------------
 We report the numerical results of iPMM-lbfgs for solving \eqref{Xmodel} where the box set $\Lambda$ is defined with $l_{ij}\equiv 0$ and $u_{ij}\equiv 1$. Since the PLMA in \cite{Zhang17} is only applicable to model \eqref{Xmodel} with $\Lambda=\mathbb{R}^{m\times n}$, for comparison, we also report the numerical results of iPMM-lbfgs for solving model \eqref{Xmodel} without $\mathbb{I}_{\Lambda}$. From the results reported in \cite[Section 5]{Zhang17}, we observe that PLMA for solving \eqref{Xmodel1} with the suggested $\gamma$ is superior to L1TV \cite{Yang09} and L1Nonconvex \cite{Nikolova13}, so here we do not compare the performace of iPMM-lbfgs with that of the latter two methods. For model \eqref{Xmodel} with and without the indicator function $\mathbb{I}_{\Lambda}$, we use the same $\nu$ whose value is listed in the $\nu$ column of Table \ref{tab1}-\ref{tab2}, and the interval in $\nu_{\rm box}$ column is the range of $\nu$ for better PSNR. The results of PLMA are obtained by applying PLMA to model \eqref{Xmodel1} with a carefully selected $\gamma$ from the range suggested in \cite{Zhang17}, i.e., the selected $\gamma$ makes the PSNRs yielded by PLAM best as much as possible.  All numerical results in Tables \ref{tab1}-\ref{tab2} are the average of those obtained for ${\bf 5}$ test instances generated randomly, and the best results are shown in boldface. 
 %------------------------------------------------------------------------------------------
 \begin{table}[H]
 \caption{\small PSNR and SSIM for images corrupted by average blur and impulse noise}\label{tab1}
 \centering
 \renewcommand\arraystretch{1.2}
 \scalebox{1}[1]{
 \begin{tabular}{@{\!}cc|ccc|cccccc@{}}
 \toprule
 Image &  Nlevel  &  & PLMA  &   &\multicolumn{2}{c}{iPMM} & &\multicolumn{2}{c}{${\rm iPMM}_{\rm box}$}& \\
 \midrule\midrule
 \multirow{5}{*}{Boat} 
  &   &   PSNR  & SSIM  & $\gamma$  & PSNR & SSIM & PSNR & SSIM & $\nu$ & $\nu_{\rm box}$  \\
 \cline{3-11}
  &30\%  &37.81 &0.9975 & 50 &37.77 & 0.9975  &\bf{38.08} & \bf{0.9977} &0.15&[0.1,0.4] \\
  \cline{2-11}
  &50\% &33.95 &0.9939  & 20  &34.63 &0.9949   &\bf{34.79} &\bf{0.9951} &0.4&[0.2,0.55]   \\
  \cline{2-11}
  &70\%  &{\bf31.56} &{\bf 0.9896}& 4 & 31.31 & 0.9890 & 31.11&0.9885 &0.4&[0.25,0.6]  \\
  \cline{2-11}
  &90\% & 25.58 & 0.9577 & 10  &{\bf 25.84} & {\bf0.9603} &25.74 &0.9595 &0.15&[0.15,0.35] \\
  \midrule\hline
  \multirow{4}{*}{Cameraman}
  &30\% &36.62 & 0.9979 &50 & 36.76 &0.9981  & {\bf37.56} &{\bf0.9985} & 0.15&[0.1,0.35] \\
  \cline{2-11}
  &50\% &\bf{32.91} & {\bf0.9957} &10 &32.55 &0.9953  &32.78 &  0.9956 & 0.4&[0.3,0.55]   \\
  \cline{2-11}
  &70\% &{\bf28.50}&{\bf 0.9881}  &{4} & 27.23 & 0.9842 & 27.75&0.9859&0.4&[0.25,0.55]   \\
  \cline{2-11}
  &{90\%} & 17.30&0.7926 &{8} & 18.24&0.8683 &{\bf 19.09}&{\bf 0.8874}&0.15&[0.15,0.3]   \\
  \midrule\hline
  \multirow{4}{*}{House}
  &{30\%}  &\bf{43.90}&\bf{0.9994} & {50} &42.55  &0.9992  &42.70&0.9992&0.15&[0.08,0.4] \\
  \cline{2-11}
  &{50\%} &\bf{39.44} &\bf{0.9983}& {5} &38.45 &0.9978  &38.57&0.9979&0.4&[0.15,0.5]    \\
  \cline{2-11}
  &{70\%}  &34.66 &0.9948 & {2}  &35.10 & 0.9953  &{\bf 35.15}&{\bf 0.9954} &0.4&[0.2,0.5]    \\
  \cline{2-11}
  &{90\%}  &27.87  &0.9741 &{20}  &30.20 &0.9853   &{\bf 30.22}&\bf{0.9853}&0.15&[0.06,0.25]   \\
  \midrule\hline
  \multirow{4}{*}{Man}
  &{30\%}  &\bf{39.14}&\bf{0.9983} &{50}  &38.49  &0.9980  & 38.74 & 0.9981&0.15&[0.12,0.35]   \\
  \cline{2-11}
  &{50\%}  &\bf{35.24}&\bf{0.9957}  &{10}  &34.71&0.9952   & 34.75&0.9952 &0.4&[0.15,0.5]    \\
  \cline{2-11}
  &{70\%} &31.50  &0.9898 &{2}  &31.71 & 0.9904 & \bf{31.74}&{\bf 0.9904} &0.4&[0.2,0.5]  \\
  \cline{2-11}
  &{90\%}  &{\bf28.11} &{\bf0.9776} &{25} &28.04 &0.9772& 28.07 &0.9774&0.15&[0.15,0.3]   \\
  \midrule\hline
  \multirow{4}{*}{Building}
  &{30\%} &30.72&0.9815 &{50}  &30.65&0.9813  &\bf{30.77} &\bf{0.9818} &0.15&[0.12,0.4]     \\
  \cline{2-11}
  &{50\%}  &{\bf27.64} &{\bf0.9617}&{20}  &27.53 &0.9607 &27.57  &0.9606&0.4&[0.2,0.5]    \\
  \cline{2-11}
  &{70\%}  &25.38 &0.9340 &{2}  & 25.42& 0.9347 &{\bf 25.43}&{\bf 0.9348}&0.4&[0.2,0.5]    \\
  \cline{2-11}
  &{90\%}   &23.12 &0.8829 &{25}  &\bf{23.38}&\bf{0.8914}   &23.37&0.8911&0.15&[0.05,0.3] \\
  \midrule\hline
  \multirow{4}{*}{Parrot}
  &{30\%} &34.30 &0.9972 &{20}  &35.34 &0.9977 & \bf{36.37}&\bf{0.9983}&0.15&[0.12,0.4]   \\
  \cline{2-11}
  &{50\%}  & 31.52&0.9947 &{5} & 31.53  &0.9948 & \bf{31.63} &\bf{0.9949}&0.4&[0.25,0.5]     \\
  \cline{2-11}
  &{70\%}  &23.64 &0.9633 &{6}  &26.61&0.9833   &\bf{27.39}&{\bf 0.9861}&0.4&[0.2,0.45]   \\
  \cline{2-11}
  &{90\%}  & 16.18 &0.7686 &{15} &16.39 &0.8062 &\bf{16.63}&{\bf 0.8133}&0.15&[0.15,0.3]  \\
  \midrule\hline
%  \multirow{5}{*}{average}
%  &{30\%} &37.08 &0.9953&  &36.93  &0.9953  & {\bf37.37} &{\bf0.9956} && \\
%  \cline{2-11}
%  &{50\%} &{\bf33.45} &{\bf0.9900} &  &33.23  &0.9898 & 33.35 &0.9899 && \\
%  \cline{2-11}
%  &{70\%} &29.21&0.9766 &  &29.56 &0.9795&{\bf29.76}&{\bf 0.9802}&&  \\
%  \cline{2-11}
%  &{90\%} &23.03& 0.8923 &  &23.68 & 0.9148  &{\bf23.85}&{\bf 0.9190}&&  \\
   \cline{1-11}
  average& & 30.69 &0.9636&  &30.85  &0.9698  &{\bf31.08} &{\bf0.9712} && \\
  \bottomrule
  \end{tabular}
	}
\end{table}
 \begin{table}[H]
 \caption{PSNR and SSIM for images corrupted by Gaussian blur and impulse noise}\label{tab2}%
 \centering
 \renewcommand\arraystretch{1.2}
 \scalebox{1}[1]{
 \begin{tabular}{@{}cc|ccc|cccccc@{}}
  \toprule
  Image &  Nlevel  &  & PLMA  &   &\multicolumn{2}{c}{iPMM} & &\multicolumn{2}{c}{${\rm iPMM}_{\rm box}$}& \\
  \midrule\midrule
  \multirow{5}{*}{Boat} 
  &   &   PSNR  & SSIM  & $\gamma$  & PSNR & SSIM & PSNR & SSIM & $\nu$ & $\nu_{\rm box}$  \\
  \cline{3-11}
  &{30\%}  &{\bf 36.57}&{\bf 0.9968} &{50}& 34.88&0.9952 &33.84 &0.9955&0.12&[0.12,0.3] \\
  \cline{2-11}
  &{50\%}  &{\bf34.52}  &{\bf0.9948}&{10}  &33.37 &0.9932 & 33.03& 0.9936 &0.12&[0.1,0.2] \\
  \cline{2-11}
  &{70\%} & 29.45& 0.9831&{4}  &{\bf 30.72}& {\bf0.9874} & 29.95& 0.9847&0.15&[0.12,0.3]  \\
  \cline{2-11}
  &{90\%}  &25.77&0.9592 &{10}  &{\bf26.16}&{\bf0.9637} & 26.02&0.9627&0.15&[0.15,0.25] \\
  \hline\hline
  \multirow{4}{*}{Cameraman}
  &{30\%} &{\bf33.38}&{\bf0.9962} &{50} &31.99 &0.9947 &32.26 &0.9950 &0.12&[0.08,0.3] \\
  \cline{2-11}
  &{50\%} &{\bf31.26} &{\bf0.9937} &{10} &30.56 &0.9927 &30.88 &0.9932&0.12&[0.1,0.2] \\
  \cline{2-11}
  &{70\%}  & 26.49 &0.9810 &{4}  & 26.93&0.9830 & {\bf27.41} &{\bf0.9848}&0.15&[0.12,0.25]   \\
  \cline{2-11}
  &{90\%}  & 16.72 &0.7821&{16}  &20.56& 0.9198  & {\bf20.69}&{\bf0.9246}&0.15&[0.12,0.25]  \\
  \midrule\hline
  \multirow{4}{*}{House}
  &{30\%}  &{\bf39.75}&{\bf0.9984}  &{50} &37.98 &0.9975   &38.00 &0.9971 &0.12& [0.05,0.2]   \\
  \cline{2-11}
  &{50\%} &{\bf36.27} &{\bf0.9964} &{10} &35.64&0.9958 & 35.31 &0.9955&0.12&[0.09,0.25]   \\
  \cline{2-11}
  &{70\%} &32.49&0.9914 &{4}  & {\bf 33.34} &{\bf 0.9930}&{\bf 33.34}&{\bf0.9930}&0.15&[0.05,0.2]   \\
  \cline{2-11}
  &{90\%}  &27.61  &0.9728&{20}  &29.78 &0.9837  &{\bf29.83} &{\bf0.9839}&0.15&[0.05,0.18]  \\
  \midrule\hline
  \multirow{4}{*}{Man}
  &{30\%} &{\bf36.36}  &{\bf0.9967}&{50} & 34.87&0.9954  &33.66 & 0.9939&0.12&[0.12,0.3]    \\
  \cline{2-11}
  &{50\%} &33.41 &0.9934 &{10} & 33.39&0.9935  &{\bf 33.64}&{\bf 0.9939}&0.12&[0.1,0.2]   \\
  \cline{2-11}
  &{70\%} &30.17 &0.9862 &{4} &{\bf31.33}&{\bf0.9895} & 31.32 &{\bf0.9895}&0.15&[0.1,0.2]   \\
  \cline{2-11}
  &{90\%} &{\bf 28.44}&{\bf0.9793} &{16} &28.39 &0.9791 & 28.43&{\bf0.9793}&0.15&[0.12,0.25] \\
  \midrule\hline
  \multirow{4}{*}{Building}
  &{30\%} &{\bf28.68} &{\bf0.9699} &{50}  &27.84&0.9633  &27.21&0.9576&0.12&[0.1,0.2]   \\
  \cline{2-11}
  &{50\%} &26.40 & 0.9482 &{10} &{\bf26.65} &{\bf0.9514} & 26.54 &0.9501 &0.12&[0.08,0.2]   \\
  \cline{2-11}
  &{70\%} &24.51 & 0.9186 &{4} &25.06&0.9290  &{\bf25.09}&{\bf0.9294}&0.15&[0.05,0.2] \\
  \cline{2-11}
  &{90\%} &23.42 &0.8919  &{16}  &23.52 &0.8958  &{\bf23.58 } &{\bf0.9873}&0.15&[0.05,0.25]\\
 			\midrule\hline
 			\multirow{4}{*}{Parrot}
 &{30\%} &{\bf33.44} &{\bf0.9966}&{50}  &32.04  &0.9954  & 32.30 &0.9956 &0.12&[0.08,0.3] \\
 			\cline{2-11}
 &{50\%} &{\bf31.29} &{\bf0.9945} &{10}  &29.73  &0.9924 & 30.76 &0.9940 &0.12&[0.1,0.25] \\
 			\cline{2-11}
 &{70\%} &27.33&0.9862 &{8}  &27.70 &0.9873&{\bf27.97}&{\bf 0.9881}&0.15&[0.1,0.25]  \\
 			\cline{2-11}
 &{90\%} &16.07& 0.7543 &{16}  &17.24 & 0.8494  &{\bf17.32}&{\bf 0.8517}&0.15&[0.1,0.3]  \\
 \midrule\hline
% \multirow{4}{*}{average}
% &{30\%} & {\bf34.70} &{\bf0.9924}&  &33.27  &0.9903  &32.89 &0.9891 && \\
% \cline{2-11}
% &{50\%} &{\bf32.19} &{\bf0.9868} &  &31.56  &0.9865 & 31.69 &0.9867 && \\
% \cline{2-11}
% &{70\%} &28.41&0.9714 &  &29.18 &0.9782&{\bf29.18}&{\bf 0.9783}&&  \\
% \cline{2-11}
% &{90\%} &23.00& 0.8899 &  &24.27 & 0.9319  &{\bf24.31}&{\bf 0.9482}&&  \\
 \cline{1-11}
 average & & {\bf29.57} &0.9601&  &{\bf29.57}  &0.9717  &29.52 &{\bf0.9756} && \\
 \bottomrule
 \end{tabular}
 	}
 \end{table}
    
 The last row of Tables \ref{tab1}-\ref{tab2} reports the average PSNR and SSIM yielded by two solvers for six images under all noise levels, which indicate that the average PSNR and SSIM yielded by iPMM-lbfgs for six images are better than those yielded by PLMA. We also see that under the $70\%$ and $90\%$ noise levels, ${\rm iPMM}_{\rm box}$ has some advantage over the PLMA; for example, the PSNRs returned by iPMM-lbfgs for Parrot image corrupted by the average blur and $70\%$ impluse noise and Cameraman image polluted by Gaussian blur and $90\%$ impluse noise are higher than that of PLMA with more ${\bf 3.5}$ db. In addition, the results of iPMM and ${\rm iPMM}_{\rm box}$ columns in Tables \ref{tab1}-\ref{tab2} show that model \eqref{Xmodel} with the box constraint is superior to the one without it, which also validates that the model with two regularizers are better than the model with a single one. It is worth pointing out that the results of ${\rm iPMM}_{\rm box}$ are all obtained by solving model \eqref{Xmodel} with the same $\nu$ for the same noise level, while the results of PLMA are obtained with different $\gamma$ even for the same noise level. From the above discussion, model \eqref{Xmodel} with the box constraint has better robustness, and iPMM-lbfgs are effective for solving it. 
 
 Figures \ref{noise1-grayiamge} and \ref{noise4-grayiamge} below illustrate the recovery results of two solvers for four images under the $30\%$ and $90\%$ noise levels. From Figure \ref{noise1-grayiamge}, for the $30\%$ noise level, there are almost no difference between the recovery results of two solvers. However, under the $90\%$ noise level, the second column of Figure \ref{noise4-grayiamge} indicates that iPMM and ${\rm iPMM}_{\rm box}$ can still recover the edge details, but the PLMA fails to do this. This shows that iPMM-lbfgs has superiority to recovering images with high impluse noise.        
 %------------------------------------------------------------------------------------
  \begin{figure}[H] 
 	\centering  
 	\subfigure[Noise with $30\%$ ]{
 		\includegraphics[width=2.8cm]{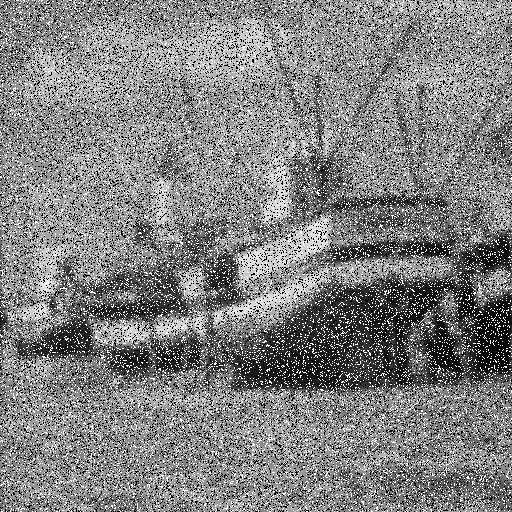}}\quad
 	\subfigure[Noise with $30\%$ ]{
 		\includegraphics[width=2.8cm]{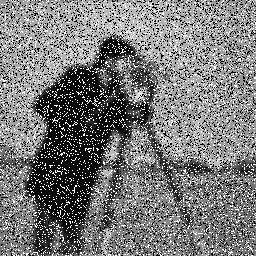}}\quad
 	\subfigure[Noise with $30\%$  ]{
 		\includegraphics[width=2.8cm]{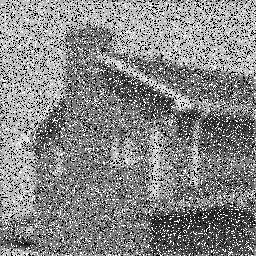}}\quad
 	\subfigure[Noise with $30\%$  ]{
 		\includegraphics[width=2.8cm]{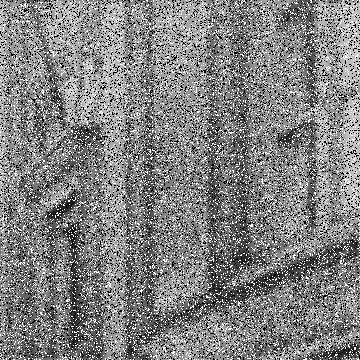}}\quad
 	\subfigure[PLMA ]{
 		\includegraphics[width=2.8cm]{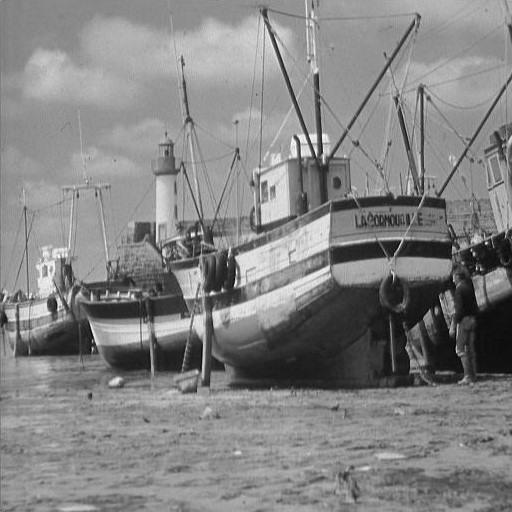}}\quad
 	\subfigure[PLMA ]{
 		\includegraphics[width=2.8cm]{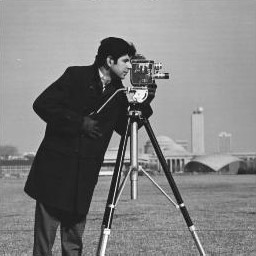}}\quad
 	\subfigure[PLMA ]{
 		\includegraphics[width=2.8cm]{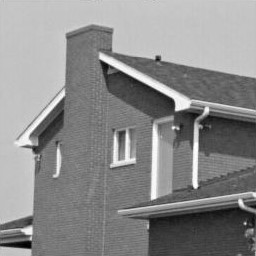}}\quad
 	\subfigure[PLMA ]{
 		\includegraphics[width=2.8cm]{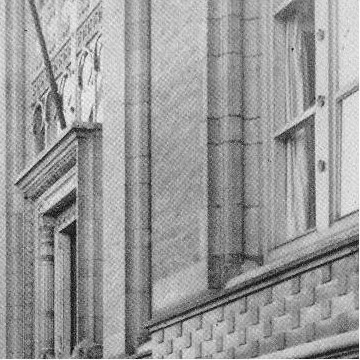}}
 	\quad
 	\subfigure[iPMM ]{
 		\includegraphics[width=2.8cm]{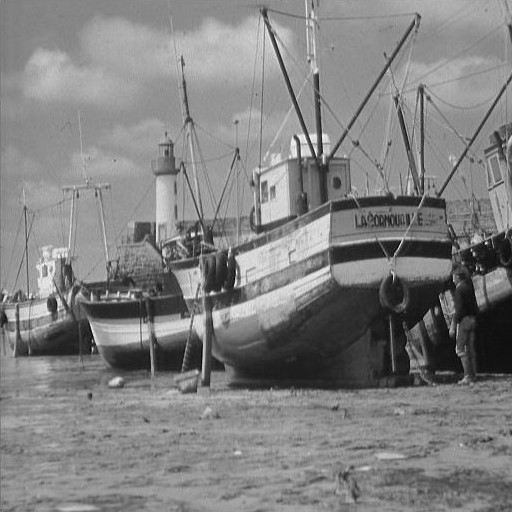}}\quad
 	\subfigure[iPMM ]{
 		\includegraphics[width=2.8cm]{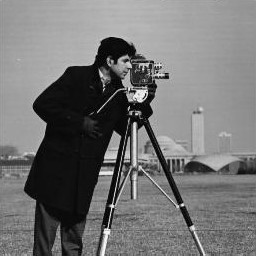}}\quad
 	\subfigure[iPMM]{
 		\includegraphics[width=2.8cm]{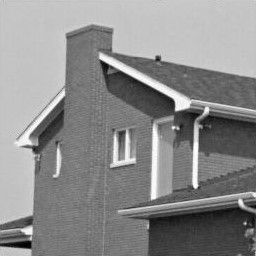}}\quad
 	\subfigure[iPMM]{
 		\includegraphics[width=2.8cm]{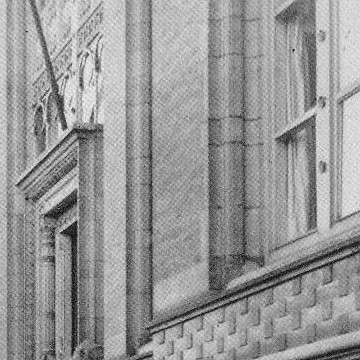}}
 	\quad
 	\subfigure[${\rm iPMM}_{\rm box}$]{
 		\includegraphics[width=2.8cm]{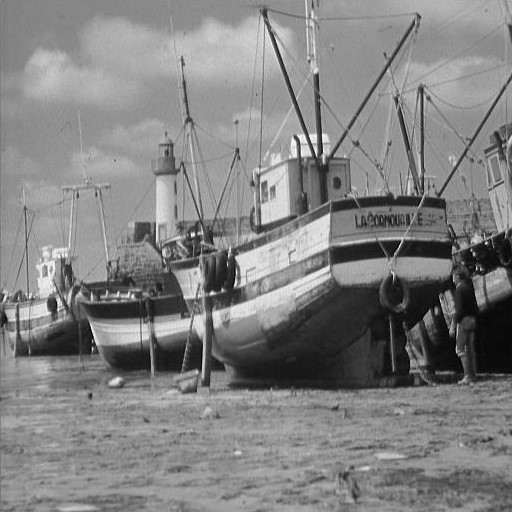}}\quad
 	\subfigure[${\rm iPMM}_{\rm box}$ ]{
 		\includegraphics[width=2.8cm]{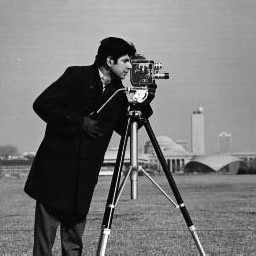}}\quad
 	\subfigure[${\rm iPMM}_{\rm box}$]{
 		\includegraphics[width=2.8cm]{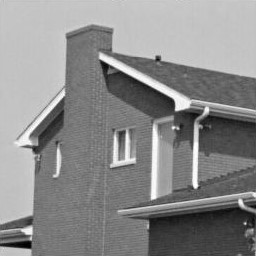}}\quad
 	\subfigure[${\rm iPMM}_{\rm box}$]{
 		\includegraphics[width=2.8cm]{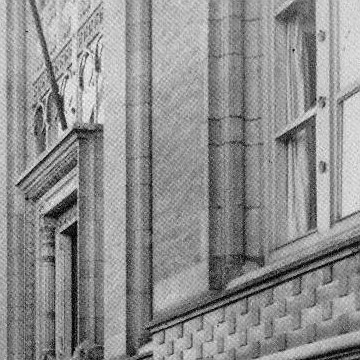}}
 	\caption{Recovery results on Boat, Cameraman, House and Building images corrupted by average blur and salt-and-pepper impluse noise with $30\%$ noise level} \label{noise1-grayiamge}	
 \end{figure}
 \begin{figure}[H] 
 	\centering  
 	\subfigure[Noise with $90\%$ ]{
 		\includegraphics[width=2.8cm]{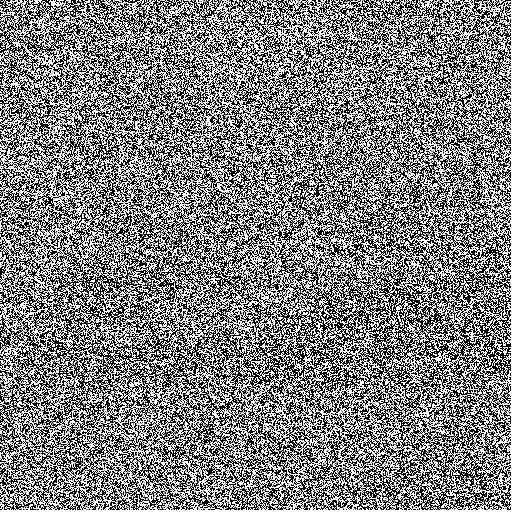}}\quad
 	\subfigure[Noise with $90\%$ ]{
 		\includegraphics[width=2.8cm]{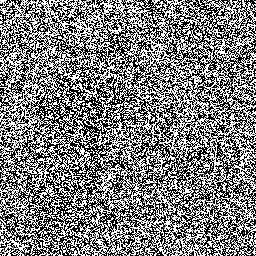}}\quad
 	\subfigure[Noise with $90\%$  ]{
 		\includegraphics[width=2.8cm]{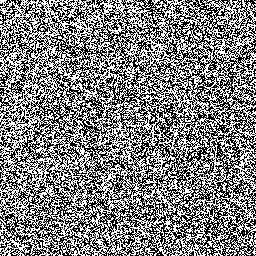}}\quad
 	\subfigure[Noise with $90\%$  ]{
 		\includegraphics[width=2.8cm]{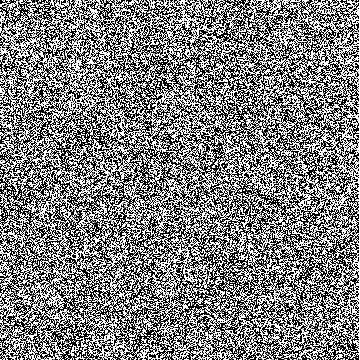}}\quad
 	\subfigure[PLMA ]{
 		\includegraphics[width=2.8cm]{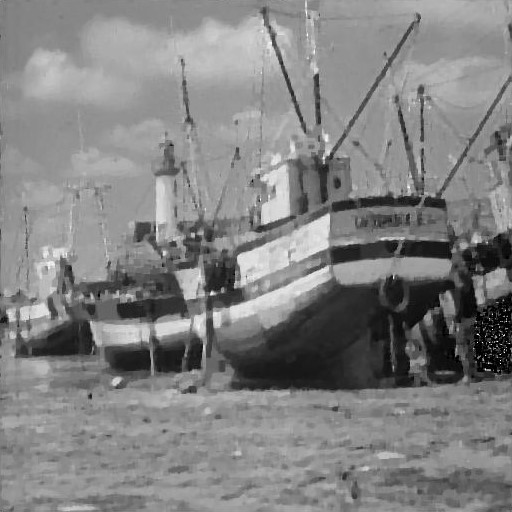}}\quad
 	\subfigure[PLMA ]{
 		\includegraphics[width=2.8cm]{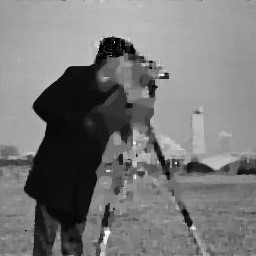}}\quad
 	\subfigure[PLMA ]{
 		\includegraphics[width=2.8cm]{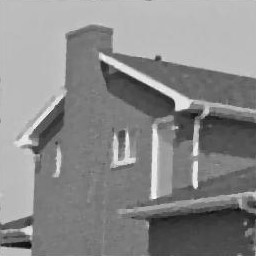}}\quad
 	\subfigure[PLMA ]{
 		\includegraphics[width=2.8cm]{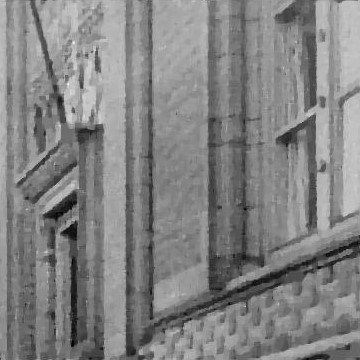}}
 	\quad
 	\subfigure[iPMM ]{
 		\includegraphics[width=2.8cm]{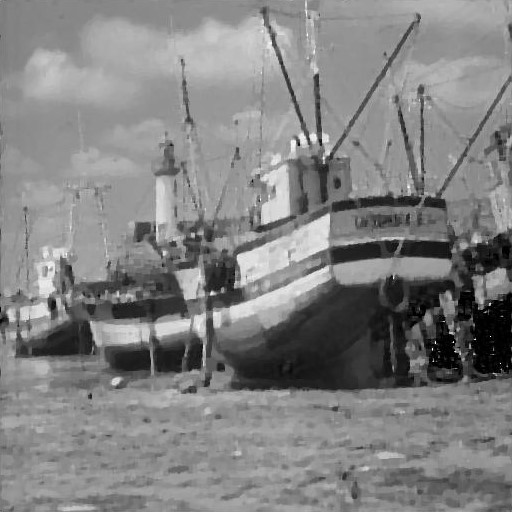}}\quad
 	\subfigure[iPMM ]{
 		\includegraphics[width=2.8cm]{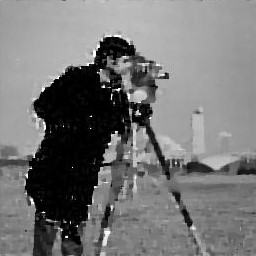}}\quad
 	\subfigure[iPMM]{
 		\includegraphics[width=2.8cm]{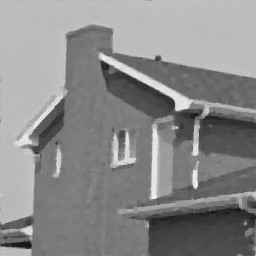}}\quad
 	\subfigure[iPMM]{
 		\includegraphics[width=2.8cm]{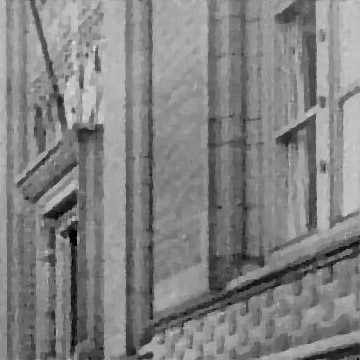}}
 	\quad
 	\subfigure[${\rm iPMM}_{\rm box}$]{
 		\includegraphics[width=2.8cm]{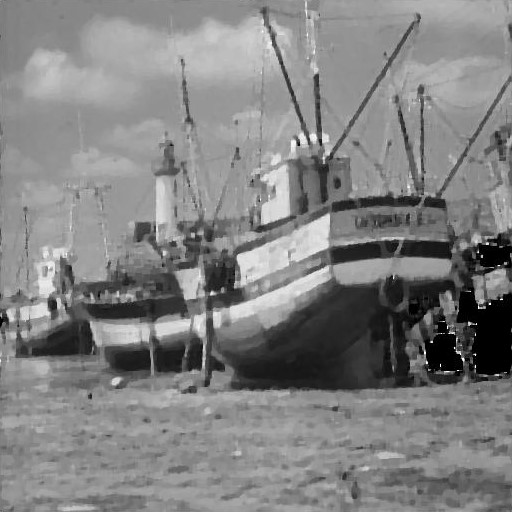}}\quad
 	\subfigure[${\rm iPMM}_{\rm box}$]{
 		\includegraphics[width=2.8cm]{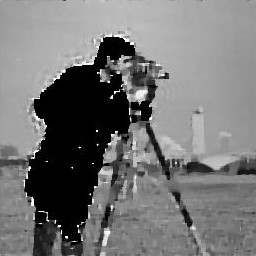}}\quad
 	\subfigure[${\rm iPMM}_{\rm box}$]{
 		\includegraphics[width=2.8cm]{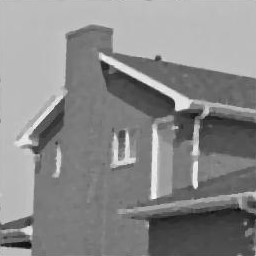}}\quad
 	\subfigure[${\rm iPMM}_{\rm box}$]{
 		\includegraphics[width=2.8cm]{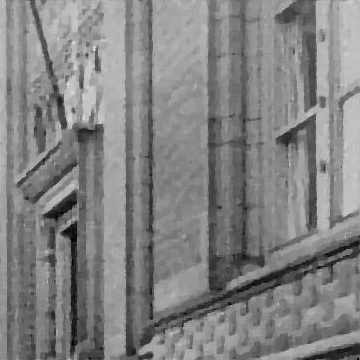}}
 	\caption{Recovery results on Boat, Cameraman, House and Building images corrupted by average blur and salt-and-pepper impluse noise with $90\%$ noise level} \label{noise4-grayiamge}	
 \end{figure}
 
 %------------------------------------------------------------------------------------------- 
 \subsection{Tests for image inpainting with impluse noise}\label{sec6.2} 
 
 We test the performance of iPMM-lbfgs to solve model \eqref{UVmodel} with $\vartheta$ being the function associated to $\theta_5$ for $\varepsilon=10^{-5}$ and $\psi$ being the TV function in \eqref{aTV-fun}. Since the color images have three channels, we apply this model to every channel, for which $\Omega=\Omega_i$ represents the index set covered by the mask in the $i$th channel of color images and $b=Y_i=\mathcal{P}_{\Omega_i}(\mathcal{Y}(:,:,i))$ for $i=1,2,3$. Here, the tensor $\mathcal{Y}$ denotes a color image corrputed by salt-and-pepper impulse noise. In our experiments, we consider two types of image masks: one is block mask that is composed of three black blocks of size $50\times 50$, and the other is text mask composed of a paragraph of black English text. The original color images include Airplane ($512\times512\times 3$), Barbara ($256\times256\times 3$), Butterfly ($256\times256\times 3$), Lena ($512\times512\times 3$), Peppers ($360\times360\times 3$), Sea ($256\times256\times 3$), which are shown in Figure \ref{ciamge} below. The subsequent tests involve three noise levels $0.0\%,10\%$ and $30\%$. 
 
 %------------------------------------------------------------------------------------------
 \begin{figure}[h] 
 	\centering  
 	\subfigure[Airplane]{
 		\includegraphics[width=2.8cm]{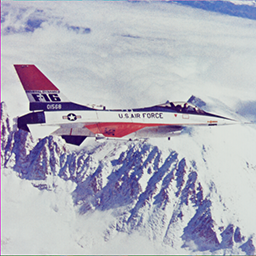}} \quad
 	\subfigure[Barbara]{
 		\includegraphics[width=2.8cm]{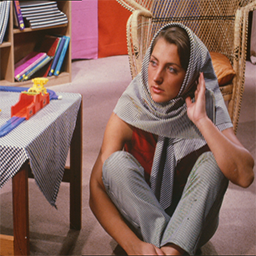}}\quad
 	\subfigure[House]{
 		\includegraphics[width=2.8cm]{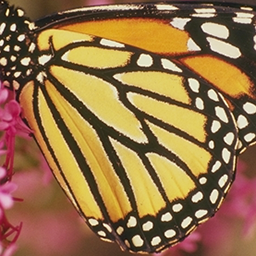}}\quad
 	\subfigure[Lena]{
 		\includegraphics[width=2.8cm]{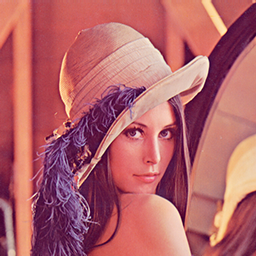}}\quad
 	\subfigure[Peppers]{
 		\includegraphics[width=2.8cm]{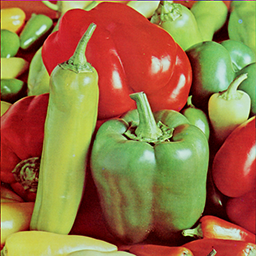}}\quad
 	\subfigure[Sea]{
 		\includegraphics[width=2.8cm]{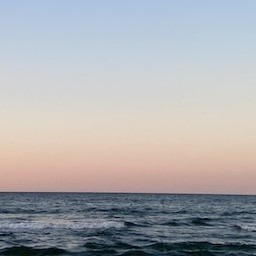}}\quad
 	\caption{The original color images used for testing image inpainting } \label{ciamge}
 \end{figure}
 
 %------------------------------------------------------------------------------------------- 
 \subsubsection{Influence of parameters $\nu$ and $\lambda$ on PSNR}\label{sec6.2.1}
 
 We take Airplane for example to check the influence of $\nu$ and $\lambda$ together on PSNR. Figure \ref{fig2} (a) and (c) plot the PSNR surfaces under the $30\%$ noise level as $(\nu,\lambda)$ varies in $[0.5,1]\times[10^{-3},10]$, and Figure \ref{fig2} (b) and (d) plot the PSNR surfaces under the noiseless scenario as $(\nu,\lambda)$ varies in $[0.1,0.5]\times[10^{-3},5]$. The PSNR surfaces in the first row of Figure \ref{fig2} are plotted for block mask, while those in the second row of Figure \ref{fig2} are plotted for text mask. We see that under the $30\%$ noise level, the PSNR attains the higher values for $(\nu,\lambda)\in[0.7,1.0]\times[10^{-3},1]$, while under the noiseless scenario, it has the higher values for $(\nu,\lambda)\in[0.2,0.4]\times[10^{-2},1]$. 
 Notably, the change range of $\lambda$ for better PSNR is far larger than that of $\nu$. This implies that the value of $\nu$ renders more influence on the PSNR than that of $\lambda$. In the subsequent experiments, we always take ${\bf\lambda=0.5}$, and report the interval of $\nu$ corresponding to better PSNR for each color image.   
 %-------------------------------------------------------------------------------------- 
  \begin{figure}[h]
 	\centering
 	\subfigure[\label{fig2a} noise level $30\%$]{\includegraphics[width=0.47\textwidth]{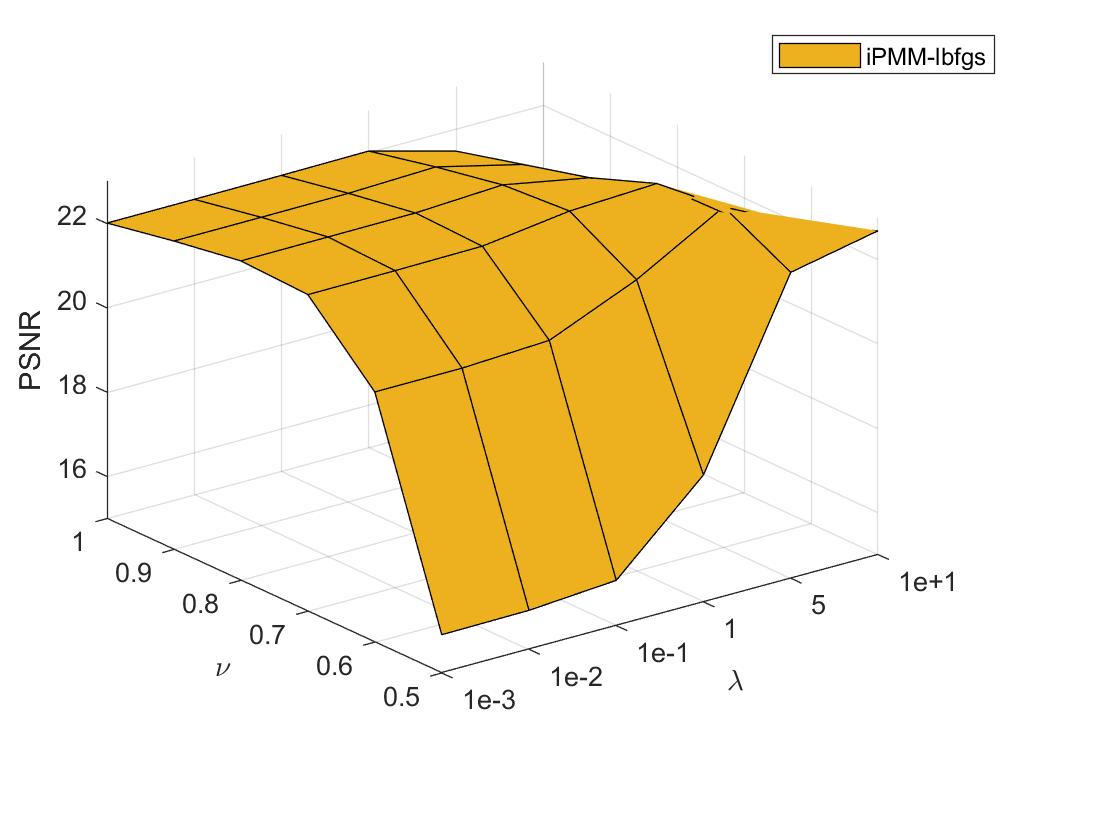}}
 	\subfigure[\label{fig2b} noise level $0.0\%$]{\includegraphics[width=0.47\textwidth]{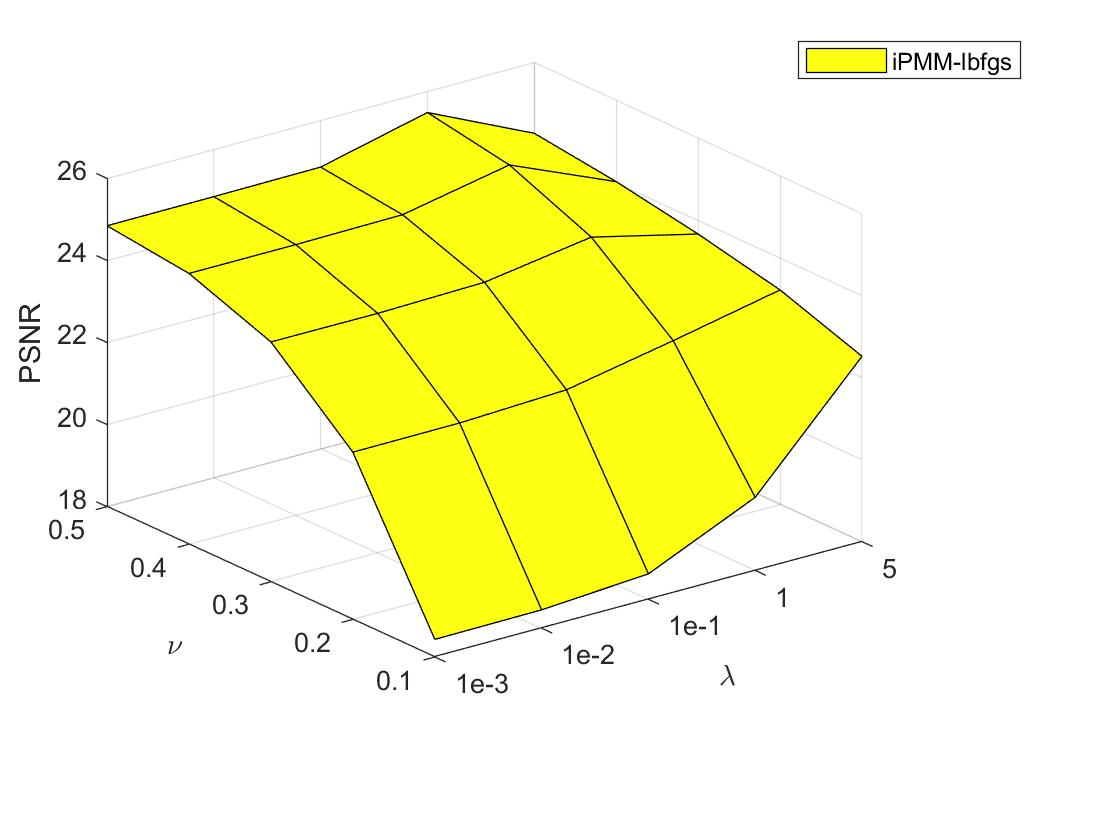}}
 	\subfigure[\label{fig3a} noise level $30\%$]{\includegraphics[width=0.47\textwidth]{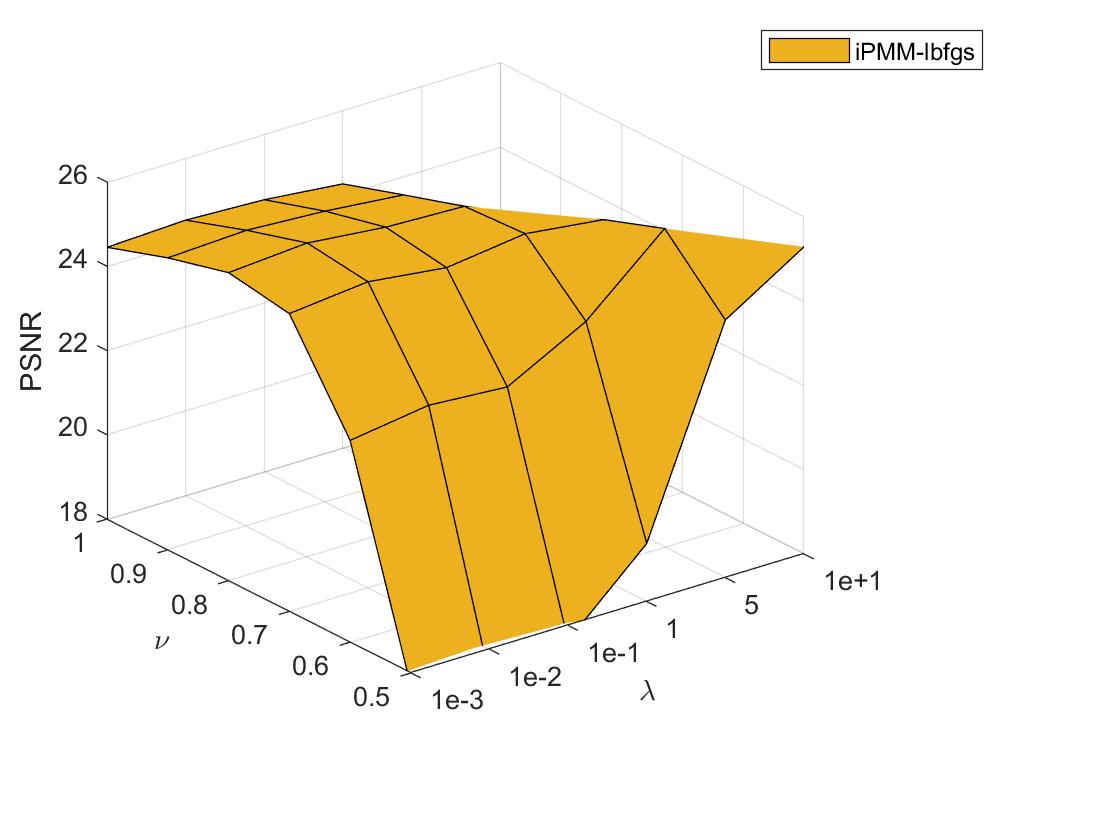}}
 	\subfigure[\label{fig3b} noise level $0.0\%$]{\includegraphics[width=0.47\textwidth]{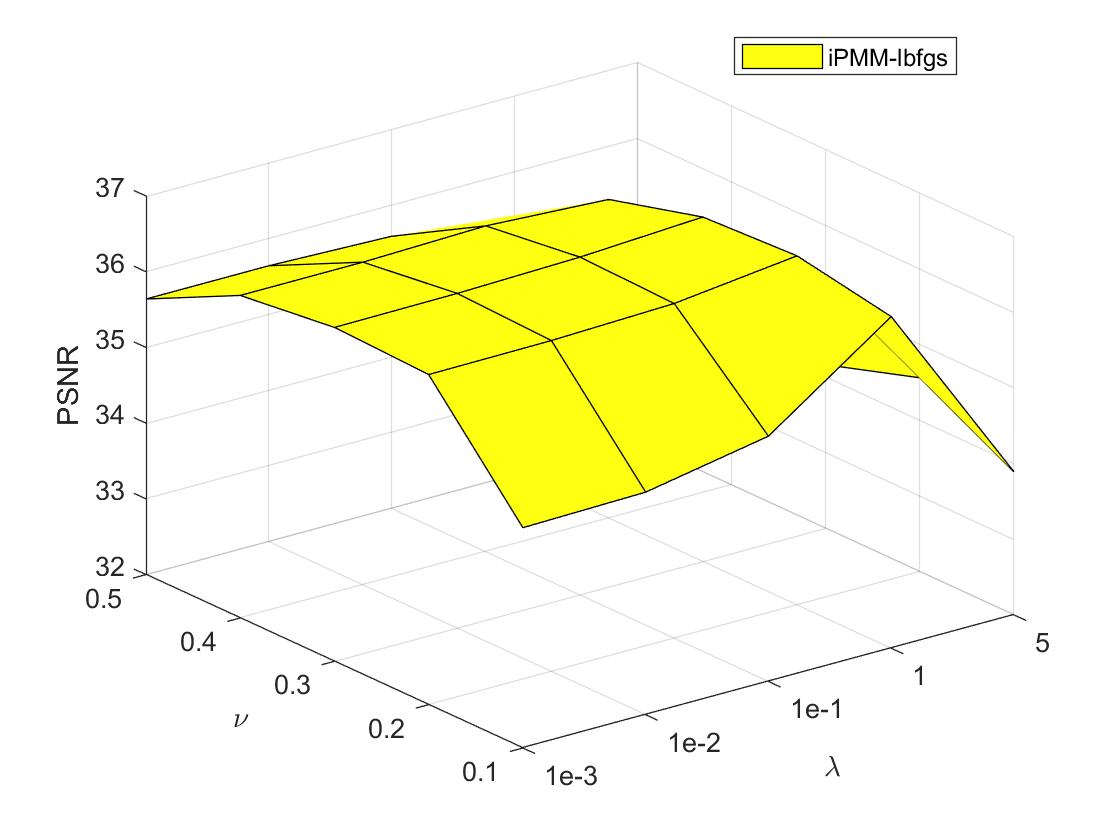}}
 	\setlength{\abovecaptionskip}{10pt}
 	\caption{Influence of $\lambda$ and $\nu$ on PSNR for block and text masks under two noise levels}
 	\label{fig2}
 \end{figure}
 %---------------------------------------------------------------------------------------
 \subsubsection{Implementation details of iPMM-lbfgs}\label{sec6.2.2}
 
 We first provide the choice of parameters for Algorithm \ref{iPMM} when it is applied to solve \eqref{UVmodel}. As mentioned in Section \ref{sec5.2},  $\mathcal{Q}_{k,j}=\gamma_{k,j}\mathcal{I}+\alpha_k\mathcal{C}_k^*\mathcal{C}_k$. We update $\alpha_k$ by \eqref{alphak-update} with $\alpha_0=0.1\underline{\gamma}$. The other parameters are chosen as in \eqref{other-para} except that $\underline{\gamma}=100$. For the parameter $\tau_k$ in \eqref{iterate-PPA}, we update it by  $\tau_{k+1}=\max\{\tau_k/1.2,\epsilon^*\}$ with $\tau_0=1.0$, where $\epsilon^*$ is the tolerance of the stopping condition. When applying model \eqref{UVmodel} to the $i$th channel, we choose the starting point $x^0=(U^0,V^0)$ with $U^0=P_1\Sigma^{1/2}$ and $V^0=Q_1\Sigma^{1/2}$, where $\Sigma={\rm diag}(\sigma_1(Y_i),\ldots,\sigma_r(Y_i))$ with $\sigma_1(Y_i)\ge\cdots\ge\sigma_r(Y_i)$, and $P_1$ and $Q_1$ are the matrices consisting of the first $r$ columns of $P$ and $Q$ with $Y_i=P{\rm diag}(\sigma(Y_i))Q^{\top}$. Here, $\sigma(Y_i)$ is the singular value vector of $Y_i$ arranged in a nonincreasing order, and ${\rm diag}(\sigma(Y_i))$ is the rectangular diagonal matrix with $\sigma(Y_i)$ being the diagonal vector.  
  We terminate Algorithm \ref{iPMM} under condition \eqref{stopping-cond} with $\epsilon^*=10^{-6}/\underline{\gamma}$. For the stopping criterion of LBFGS, let $\xi^{l}$ be the current iterate of LBFGS and let $x^{k,j}=\mathcal{P}_{\lambda\gamma_{k,j}^{-1}}h(x^k-\!\gamma_{k,j}^{-1}\mathcal{C}_k^*(\xi^l))$. By the weak duality theorem, $\Phi_k(\xi^{l})\le \Theta_{k,j}(\overline{x}^k)$. Hence, when either of the conditions
 \[
 \Theta_{k,j}(x^{k,j})<\Theta(x^k)\ \ {\rm and}\ \ \Theta_{k,j}(x^{k,j})-\Phi_k(\xi^{l})\le\frac{\mu_k}{2}\big[\Theta(x^k)-\Theta_{k,j}(x^{k,j})\big],
 \]
 we terminate LBFGS at the iterate $\xi^{l}$, and now $x^{k,j}$ is an inexact minimizer of \eqref{subprobk} satisfying the condition \eqref{inexct-cond}. For the subsequent tests, we choose the number of memory of LBFGS to be ${\bf10}$ and the maximum number of iterates to be ${\bf50}$.   
 
 We compare the performance of iPMM-lbfgs for solving model \eqref{UVmodel} with that of the ADMM proposed in \cite{Shang18} for solving the following model 
 \begin{align}\label{UVmodel1}
 &\min_{(U,V)\in\mathbb{X}_{r}\atop L,S\in\mathbb{R}^{m\times n}}\, \|\mathcal{P}_{\Omega}(S)\|_{\ell_{1/2}}^{1/2}+(\gamma/2)\big(\|U\|_*+\|V\|_*\big)\nonumber\\
 &\quad\ {\rm s.t.}\ \ UV^{\top}=L,\,L+S=D,
 \end{align}
 where $\|X\|_{\ell_{1/2}}$ is the $\ell_{1/2}$-norm of  $X\in\mathbb{R}^{m\times n}$, i.e., $\|X\|_{\ell_{1/2}}^{1/2}\!=\!\sum_{i=1}^m\sum_{j=1}^n|X_{ij}|^{1/2}$. More accurately, the ADMM is applied to the equivalent reformulation of \eqref{UVmodel1}:
 \begin{align}\label{EUVmodel1}
 &\min_{(U',V'),(U,V)\in\mathbb{X}_{r}\atop L,S\in\mathbb{R}^{m\times n}} \|\mathcal{P}_{\Omega}(S)\|_{\ell_{1/2}}^{1/2}+(\gamma/2)\big(\|U\|_*+\|V\|_*\big)\nonumber\\
 &\qquad\ {\rm s.t.}\ \ U'-U=0,\, V'-V=0,\,UV^{\top}=L,\,L+S=D.
 \end{align}
 Since the code of ADMM in \cite{Shang18} is unavailable, we implement it according to the iterate steps described in \cite[Algorithm 1]{Shang18} by ourselves. In the implementation of
 ADMM, we update the penalty parameter $\mu$ by the rule  $\mu_{k+1}=\min\{1.1\mu_k,10^{10}\}$ if ${\rm mod}(k,50)=0$, and the initial penalty parameter $\mu_0$ is chosen to be $10^{-3}$. Such an adjusting rule is basically the same as the one suggested in \cite{Shang18} for the noiseless scenario, but is better than the latter for the noisy case. 
  
 Note that models \eqref{UVmodel} and \eqref{UVmodel1} both depend on the positive integer $r$. Our  preliminary tests indicate that for fixed $\lambda$ and $\nu$, the PSNR is better for the larger $r$. In view of this, we take $r=\min\{m,n\}$ for the subsequent experiments.
 
 \subsubsection{Numerical results for image inpainting}\label{sec6.2.3}
 
 We report the numerical results of iPMM-lbfgs for solving model \eqref{UVmodel} and those of ADMM for solving model \eqref{UVmodel1} in Tables \ref{tab3}-\ref{tab4}.  All results in Tables \ref{tab3}-\ref{tab4} are the average of those obtained for ${\bf 5}$ test problems generated randomly  and the best results are shown in boldface. The average results of six images under all noise levels are listed in the last row. The value of parameter $\nu$ for model \eqref{UVmodel} is listed in $\nu$ column, and the interval in $\nu_{\rm box}$ column is the range of $\nu$ with $\lambda=0.5$ for better PSNR. During the testing, we observe that the quality of solutions returned by ADMM depends heavily on the parameter $\gamma$ in model \eqref{UVmodel1}, so we report its value when using ADMM to solve \eqref{UVmodel1}. 
 %------------------------------------------------------------------------------------------
 \begin{table}[h]
 	\caption{PSNR and SSIM for color images corrupted by block missing and impulse noise}\label{tab3}%
 	\centering
 	\renewcommand\arraystretch{1}
 	\scalebox{1}[1]{
 		\begin{tabular}{@{}cc|ccc|cccc@{}}
 			\toprule
 			Image &  Noise level  &  & ADMMLp  &   & & \multicolumn{2}{c}{iPMM-lbfgs}&  \\
 			\midrule\midrule
 			\multirow{4}{*}{Airplane} 
 			&   &   PSNR  & SSIM  & $\gamma$  & PSNR & SSIM & $\nu$ & $\nu_{\rm box}$  \\
 			\cline{3-9}
 			&{0\%}  &23.20 &0.8953 & {15}  &{\bf25.28}&{\bf 0.9282}&0.5&[0.4,0.6]  \\
 			\cline{2-9}
 			&{10\%}  &20.90&0.7937 &{80}  &{\bf24.42}&{\bf0.9068}&0.6&[0.5,0.8]\\
 			\cline{2-9}
 			&{30\%}  &17.55  &0.6385&{190}  &{\bf22.67} & {\bf0.8494}&0.8& [0.65,1.0] \\
 			\hline\hline
 			\multirow{3}{*}{Barbara} 
 			&{0\%}  &22.27  &0.9023&{15} &{\bf25.16}&{\bf0.9391}&0.5&[0.3,0.6]  \\
 			\cline{2-9}
 			&{10\%}  & 20.90  &0.7937&{90} &{\bf23.47}& {\bf0.9177}&0.6&[0.5,0.8]\\
 			\cline{2-9}
 			&{30\%} &18.52 &0.6705&{190} &{\bf22.28}&{\bf0.8697}&0.8&[0.7,1.0]\\
 			\midrule\hline
 			\multirow{3}{*}{Butterfly} 
 			&{0\%}  &19.37  &0.9106&{15} &{\bf 21.00}&{\bf0.9453}&0.5&[0.5,0.7]\\
 			\cline{2-9}
 			&{10\%}  & 18.57 &0.8634 &{70} &{\bf20.58} &{\bf0.9345}&0.6&[0.55,0.7]\\
 			\cline{2-9}
 			&{30\%}  &16.16 &0.7465&{120} &{\bf19.58}&{\bf0.9063}&0.8&[0.7,1.0]  \\
 			\midrule\hline
 			\multirow{3}{*}{Lena} 
 			&{0\%}  &23.32  &0.9590 &{15} &{\bf 25.37} &{\bf0.9790}&0.5&[0.4,0.7] \\
 			\cline{2-9}
 			&{10\%}  &21.83&0.9213&{60}  &{\bf24.69}&{\bf0.9717}&0.6&[0.5,0.9] \\
 			\cline{2-9}
 			&{30\%} &19.89&0.8831  &{120} &{\bf23.56}& {\bf0.9553}&0.8&[0.65,0.95]\\
 			\hline\hline
 			\multirow{3}{*}{Peppers} 
 			&{0\%}  &21.34 &0.9372 &{15}  &{\bf22.86}&{\bf0.9537} &0.5&[0.3,1.0]  \\
 			\cline{2-9}
 			&{10\%}  &20.24 &0.9079&{70} &{\bf22.52}&{\bf0.9493}&0.7&[0.5,0.9] \\
 			\cline{2-9}
 			&{30\%}  &18.62&0.8632 &{120} &{\bf21.82}&{\bf0.9371}&0.8&[0.65,1.0]\\
 			\midrule\hline
 			\multirow{3}{*}{Sea} 
 			&{0\%}   &{\bf33.15}&{\bf0.9750} &{30} &32.22& 0.9756&0.5&[0.5,0.9] \\
 			\cline{2-9}
 			&{10\%}  &29.28 &0.9340&{70}  &{\bf31.73}&{\bf0.9679 }&0.6&[0.55,0.9]\\
 			\cline{2-9}
 			&{30\%} &26.27&0.8952 &{130} &{\bf30.05}& {\bf0.9448}&0.8&[0.75,1.0]  \\
 			\midrule\hline
% 			\multirow{3}{*}{average} 
% 			&{0\%}   &23.77&0.9299 & &{\bf25.31}& {\bf0.9535}&& \\
% 			\cline{2-9}
% 			&{10\%}  &21.95 &0.8690&  &{\bf24.57}&{\bf0.9413 }&&\\
% 			\cline{2-9}
% 			&{30\%} &19.50&0.7768 & &{\bf23.33}& {\bf0.9104}&&  \\
% 			\cline{1-9}
 		average	&   &21.74&0.8586 & &{\bf24.40}& {\bf0.9351}&& \\
 			\midrule		
 		\end{tabular} 
 	}
 \end{table}
%------------------------------------------------------------------------------------------
\begin{table}[h]
	\caption{PSNR and SSIM for color images corrupted by text missing and impulse noise}\label{tab4}%
	\centering
	\renewcommand\arraystretch{1}
	\scalebox{1}[1]{
		\begin{tabular}{@{}cc|ccc|cccc@{}}
			\toprule
			Image &  Noise level  &  & ADMMLp  &   & & \multicolumn{2}{c}{iPMM-lbfgs}&  \\
			\midrule\midrule
			\multirow{4}{*}{Airplane} 
			&   &   PSNR  & SSIM  & $\gamma$  & PSNR & SSIM & $\nu$ & $\nu_{\rm box}$  \\
	
			&{0\%}   &31.39 &0.9232 &{15} &{\bf36.55}&{\bf0.9915}&0.3&[0.1,0.45]\\
			\cline{2-9}
			&{10\%}  &23.99  &0.8539&{70} &{\bf30.50}&{\bf0.9561}&0.6&[0.55,0.7] \\
			\cline{2-9}
			&{30\%}  &19.91 &0.7166 &{120} &{\bf25.34}&{\bf0.8789}&0.8&[0.7,0.9]  \\
			\hline\hline
			\multirow{3}{*}{Barbara} 
			&{0\%}   &30.72 &0.9507 &{20} & {\bf36.15}&{\bf0.9903}&0.3&[0.15,0.4]\\
			\cline{2-9}
			&{10\%}  &25.27 &0.8796 &{70} &{\bf30.08}&{\bf0.9632}&0.6&[0.55,0.7]\\
			\cline{2-9}
			&{30\%}  &21.13  &0.7527 &{120} &{\bf25.86}&{\bf0.9089}&0.8&[0.7,0.95]\\
			\midrule\hline
			\multirow{3}{*}{Butterfly} 
			&{0\%}   &29.42  &0.9789&{20} &{\bf 33.67}&{\bf0.9949}&0.3&[0.1,0.55]  \\
			\cline{2-9}
			&{10\%}  &22.52 &0.9194 &{70}  &{\bf28.66}&{\bf0.9815}&0.6&[0.55,0.7]\\
			\cline{2-9}
			&{30\%}  &17.16 &0.7762 &{120} &{\bf23.77}&{\bf0.9459} &0.8&[0.65,1.0] \\
			\midrule\hline
			\multirow{3}{*}{Lena} 
			&{0\%}  &31.31 &0.9860  &{15} &{\bf36.70}&{\bf0.9964}&0.3&[0.35,0.8] \\
			\cline{2-9}
			&{10\%}  &24.49 &0.9405&{60}  &{\bf31.37}&{\bf0.9876}&0.6&[0.5,0.9] \\
			\cline{2-9}
			&{30\%}  &20.71 &0.8941&{120} &{\bf27.14}&{\bf0.9681}&0.8&[0.65,1.0]\\
			\hline\hline
			\multirow{3}{*}{Peppers} 
			&{0\%}  &31.81 & 0.9889 &{15} &{\bf38.11}&{\bf0.9977}&0.3&[0.2,0.45]  \\
			\cline{2-9}
			&{10\%} &24.10&0.9454  &60 &{\bf32.25}&{\bf0.9915 }&0.6&[0.55,0.7] \\
			\cline{2-9}
			&{30\%}  &20.48 &0.8960  &{120} &{\bf27.31}&{\bf0.9748}&0.8&[0.7,0.9]  \\
			\midrule\hline
			\multirow{3}{*}{Sea} 
			&{0\%}  &38.50 &0.9908&{20} &{\bf42.63}& {\bf0.9969}&0.3&[0.15,0.55] \\
			\cline{2-9}
			&{10\%}  &30.09 &0.9444 &{80} &{\bf38.80}&{\bf0.9824}&0.6&[0.6,0.8]\\
			\cline{2-9}
			&{30\%} &25.91&0.8893 &{140} &{\bf31.90}&{\bf0.9467}&0.8&[0.8,1.0]\\
			\midrule\hline
%			\multirow{3}{*}{average} 
%			&{0\%}   &32.19&0.9698 & &{\bf37.30}& {\bf0.9946}&& \\
%			\cline{2-9}
%			&{10\%}  &25.08 &0.9139&  &{\bf31.94}&{\bf0.9771 }&&\\
%			\cline{2-9}
%			&{30\%} &20.88&0.8208 & &{\bf26.89}& {\bf0.9372}&&  \\
		\cline{1-9}
		average&   &26.05&0.9015 & &{\bf32.04}& {\bf0.9696}&& \\
			\midrule
		\end{tabular}
	}
\end{table}

 From Tables \ref{tab3}-\ref{tab4}, we see that the PSNRs yielded by iPMM-lbfgs for solving model \eqref{UVmodel} are much better than those yielded by ADMM for solving model \eqref{UVmodel1} with carefully selected $\gamma$. Under noiseless scenario, the PSNRs of the former are higher than those of the latter with more ${\bf 2}$ db; while under the $30\%$ noise level, the PSNRs of the former are generally higher than those of the latter with at least ${\bf 4}$ db. This not only confirms the TV plus low rank regularization term is better than the single low rank regularization term in image inpainting, but also validates the efficiency of iPMM-lbfgs for solving the nonconvex and nonsmooth model \eqref{UVmodel}. Figures \ref{block-coloriamge}-\ref{text-coloriamge} illustrate the recovery results of two solvers for four images. The last two rows of Figure \ref{block-coloriamge} show that the images recovered by the two solvers still involve block marks because this typle of inpainting tasks is very difficult, while the last two rows of Figure \ref{text-coloriamge} demonstrate that the four images recovered by iPMM-lbfgs have clear edge details, but the edge details of those images recovered by the ADMM-L1/2 are blurring.       
%------------------------------------------------------------------------------------------------- 
\begin{figure}[h] 
	\centering  
	\subfigure[Noise with $0\%$ ]{
		\includegraphics[width=2.8cm]{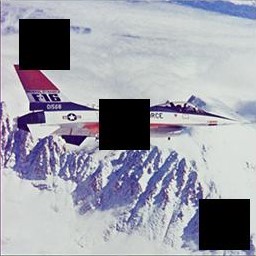}}\quad
	\subfigure[Noise with $0\%$ ]{
		\includegraphics[width=2.8cm]{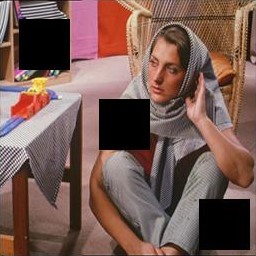}}\quad
	\subfigure[Noise with $0\%$  ]{
		\includegraphics[width=2.8cm]{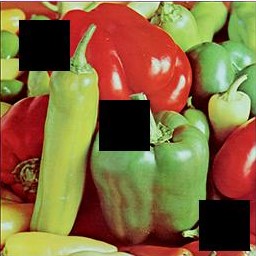}}\quad
	\subfigure[Noise with $0\%$  ]{
		\includegraphics[width=2.8cm]{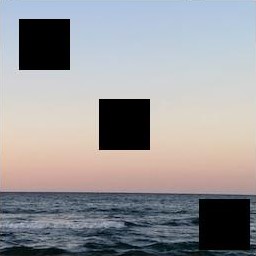}}\quad
	\subfigure[ADMM-L1/2]{
		\includegraphics[width=2.8cm]{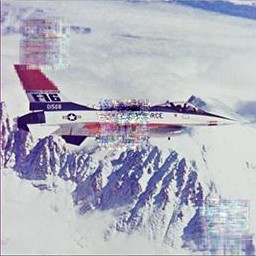}}\quad
	\subfigure[ADMM-L1/2 ]{
		\includegraphics[width=2.8cm]{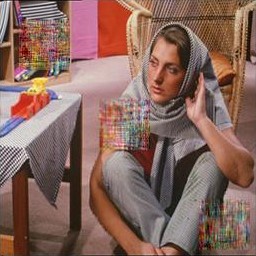}}\quad
	\subfigure[ADMM-L1/2 ]{
		\includegraphics[width=2.8cm]{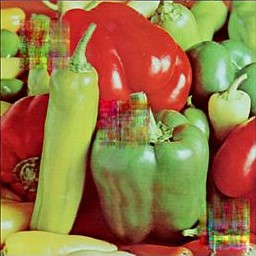}}\quad
	\subfigure[ADMM-L1/2 ]{
		\includegraphics[width=2.8cm]{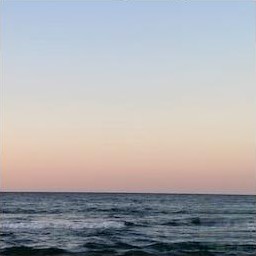}}
	\quad
	\subfigure[iPMM-lbfgs ]{
		\includegraphics[width=2.8cm]{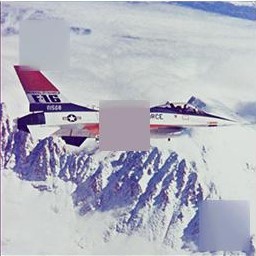}}\quad
	\subfigure[iPMM-lbfgs ]{
		\includegraphics[width=2.8cm]{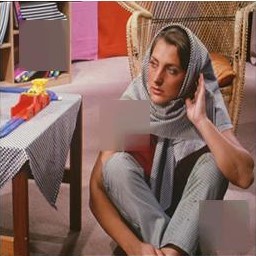}}\quad
	\subfigure[iPMM-lbfgs]{
		\includegraphics[width=2.8cm]{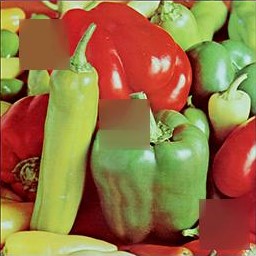}}\quad
	\subfigure[iPMM-lbfgs]{
		\includegraphics[width=2.8cm]{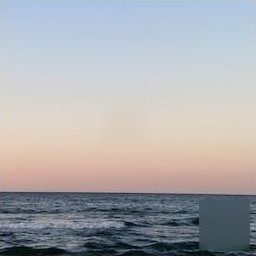}}
	\caption{Recovery results on Airplane, Barbara, Peppers and Sea images with block mask} \label{block-coloriamge}	
\end{figure}
\begin{figure}[h] 
	\centering  
	\subfigure[Noise with $30\%$ ]{
		\includegraphics[width=2.8cm]{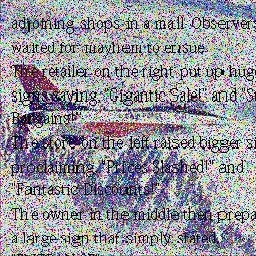}}\quad
	\subfigure[Noise with $30\%$ ]{
		\includegraphics[width=2.8cm]{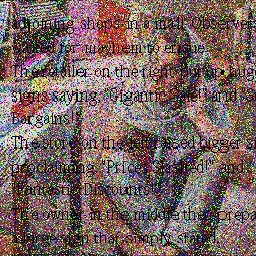}}\quad
	\subfigure[Noise with $30\%$  ]{
		\includegraphics[width=2.8cm]{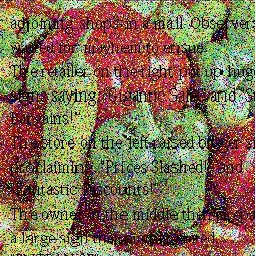}}\quad
	\subfigure[Noise with $30\%$  ]{
		\includegraphics[width=2.8cm]{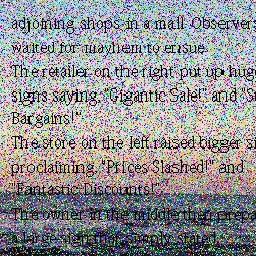}}\quad
	\subfigure[ADMM-L1/2]{
		\includegraphics[width=2.8cm]{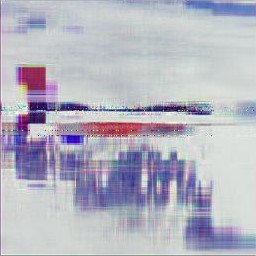}}\quad
	\subfigure[ADMM-L1/2 ]{
		\includegraphics[width=2.8cm]{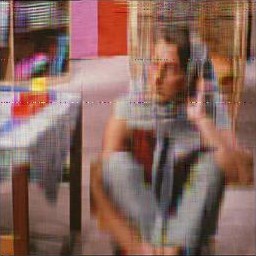}}\quad
	\subfigure[ADMM-L1/2 ]{
		\includegraphics[width=2.8cm]{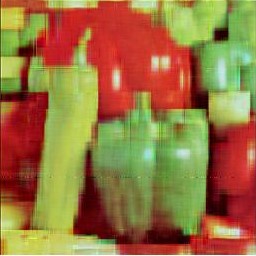}}\quad
	\subfigure[ADMM-L1/2 ]{
		\includegraphics[width=2.8cm]{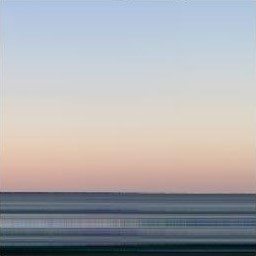}}
	\quad
	\subfigure[iPMM-lbfgs ]{
		\includegraphics[width=2.8cm]{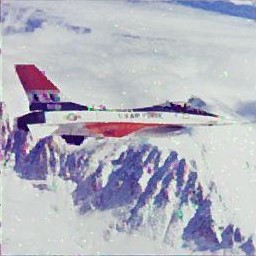}}\quad
	\subfigure[iPMM-lbfgs ]{
		\includegraphics[width=2.8cm]{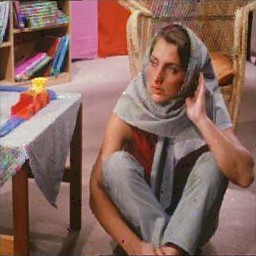}}\quad
	\subfigure[iPMM-lbfgs]{
		\includegraphics[width=2.8cm]{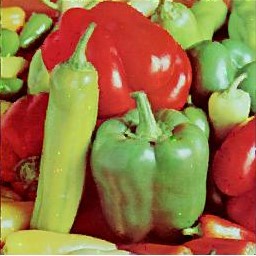}}\quad
	\subfigure[iPMM-lbfgs]{
		\includegraphics[width=2.8cm]{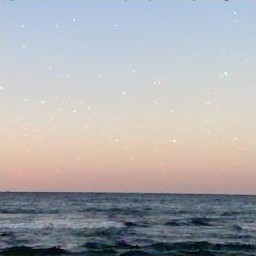}}
	
	\caption{Recovery results on Airplane, Barbara, Peppers and Sea images corrupted by text mask and salt-and-pepper impluse noise with noise level $30\%$} \label{text-coloriamge}	
\end{figure} 
%--------------------------------------------------------------------------------------
\section{Conclusion}\label{sec7}

 We proposed a general composite optimization model for image reconstruction with impluse noise, which involves a family of nonconvex functions as a data-fidelity term to promote sparsity and two different regularization terms to enhance the quality of recovered images. For this class of nonconvex and nonsmooth optimization problems, we develop an inexact proximal MM method by  constructing the majorization of its objective function at the current iterate and then seeking an inexact minimizer of the majorization function in each step, and establish the convergence of the generated iterate sequence under the KL property of the potential function $\Xi$. We apply the developed method to image deblurring and inpainting tasks, and conduct extensive numerical experiments on real-world images. Numerical comparions show that our method has better robustness, and it is better than the PLMA \cite{Zhang17} in terms of PSNR and SSIM under the high-noise scenario, and is significantly superior to the ADMMLp \cite{Shang18} in terms of PSNR and SSIM. 

%---------------------------------------------------------------------------------------

%\appendix
%\section{An example appendix} 
%
%
%\begin{lemma}
%Test Lemma.
%\end{lemma}

%\section*{Acknowledgments}

\bibliographystyle{siamplain}
\bibliography{references}
\end{document}